\numberwithin{equation}{section}
\numberwithin{figure}{section}
\theoremstyle{plain}
\newtheorem{thm}{\protect\theoremname}[section]
\theoremstyle{definition}
\newtheorem{defn}[thm]{\protect\definitionname}
\theoremstyle{plain}
\newtheorem{lem}[thm]{\protect\lemmaname}
\theoremstyle{plain}
\newtheorem{prop}[thm]{\protect\propositionname}
\theoremstyle{definition}
\newtheorem{example}[thm]{\protect\examplename}
\theoremstyle{remark}
\newtheorem{rem}[thm]{\protect\remarkname}
\theoremstyle{plain}
\newtheorem{cor}[thm]{\protect\corollaryname}
\tikzset{%
    add/.style args={#1 and #2}{
        to path={%
 ($(\tikztostart)!-#1!(\tikztotarget)$)--($(\tikztotarget)!-#2!(\tikztostart)$)%
  \tikztonodes},add/.default={.2 and .2}}
}  
\newcommand{\xyR}[1]{
  \xydef@\xymatrixrowsep@{#1}}
\newcommand{\xyC}[1]{
  \xydef@\xymatrixcolsep@{#1}}
\theoremstyle{definition}
\newtheorem{conj}[thm]{Conjecture}
\tikzset{cross/.style={cross out, draw, 
         minimum size=2*(#1-\pgflinewidth), 
         inner sep=0pt, outer sep=0pt}}
\definecolor{redviolet}{rgb}{0.78,0.08,0.52}
\providecommand{\corollaryname}{Corollary}
\providecommand{\definitionname}{Definition}
\providecommand{\examplename}{Example}
\providecommand{\lemmaname}{Lemma}
\providecommand{\propositionname}{Proposition}
\providecommand{\remarkname}{Remark}
\providecommand{\theoremname}{Theorem}
\begin{document}
\title{Weyl groups and cluster structures of families of log Calabi-Yau surfaces}
\author{Yan Zhou}
\begin{abstract}
Given a generic Looijenga pair $(Y,D)$ together with a toric model
$\rho:(Y,D)\rightarrow(\overline{Y},\overline{D})$, one can construct
a seed ${\bf s}$ such that the corresponding $\mathcal{X}$-cluster
variety $\mathcal{X}_{{\bf s}}$ can be viewed as the universal family
of the log Calabi-Yau surface $U=Y\setminus D$. In cases where $(Y,D)$
is positive and $\mathcal{X}_{{\bf s}}$ is \emph{not} acyclic, we
describe the action of the Weyl group of $(Y,D)$ on the scattering
diagram $\mathfrak{D}_{{\bf s}}$. Moreover, we show that there is
a Weyl group element ${\bf w}$ of order $2$ that either agrees with
or approximates the Donaldson-Thomas transformation ${\rm DT}_{\mathcal{X}_{{\bf s}}}$
of $\mathcal{X}_{{\bf s}}$. As a corollary, ${\rm DT}_{\mathcal{X}_{{\bf s}}}$
is cluster. In positive non-acyclic cases, we also apply the folding
technique as developed in \cite{YZ} and construct a maximally folded
new seed $\overline{{\bf s}}$ from ${\bf s}$. The $\mathcal{X}$-cluster
variety $\mathcal{X}_{\overline{{\bf s}}}$ is a locally closed subvariety
of $\mathcal{X}_{{\bf s}}$ and corresponds to the maximally degenerate
subfamily in the universal family. We show that the action of the
special Weyl group element ${\bf w}$ on $\mathfrak{D}_{{\bf s}}$
descends to $\mathfrak{D}_{\overline{{\bf s}}}$ and permutes distinct
subfans in $\mathfrak{D}_{\overline{{\bf s}}}$ , generalizing the
well-known case of the Markov quiver.
\end{abstract}

\maketitle

\section{Introduction}

\subsection{Background.}

In the 90's, Lusztig did seminal work on total positivity in reductive
groups and canonical bases in quantum groups (\cite{Lu90,Lu94}).
\emph{Cluster algebras} (\cite{FZ02}) were invented by Fomin and
Zelevinsky in 2000 as an algebraic and combinatorial framework to
study total positivity and Lusztig's\emph{ dual canonical bases} in
semisimple Lie groups. Remarkably, cluster structures also appear
in many seemingly unrelated areas in mathematics like Teichmüller
theory, Poisson geometry, quiver representations, quantum field theory,
discrete dynamical systems, combinatorics..., and thus serve as potential
bridges between these various areas. 

In 2014, Gross, Hacking, Keel and Kontsevich in \cite{GHKK18} proposed
a new framework to understand cluster algebras via\emph{ mirror symmetry}.
Mirror symmetry originated from string theory and reveals surprising
duality between complex geometry and symplectic geometry on pairs
of \emph{Calabi-Yau varieties} that are mirror dual to each other.
Calabi-Yau varieties are complex manifolds with a holomorphic volume
form that is unique up to scaling. \emph{Cluster varieties}, varieties
obtained by gluing algebraic tori together via a special class of
birational maps called \emph{mutations} which are encoded by cluster
combinatorics, are examples of open Calabi-Yau varieties called \emph{log
Calabi-Yau varieties}.

Using combinatorial gadgets from enumerative geometry like \emph{scattering
diagrams} and\emph{ broken lines}, Gross, Hacking, Keel and Kontsevich
construct \emph{canonical bases} for cluster algebras consisting of
elements called \emph{theta functions}. This new framework suggests
that cluster varieties from representation theory, like double Bruhat
cells, Grassmanians, base affine spaces, should give canonical bases
of representations intrinsic to the geometry of their mirror dual
varieties. Indeed, the geometric realization of broken lines and theta
functions are done in \cite{KY} for smooth log Calabi-Yau varieties
containing a Zariski open dense torus, which include the affine closure
of double Bruhat cells. 

As we expect that canonical bases are intrinsic to the geometry of
log Calabi-Yau varieties and should be independent of cluster structures,
a natural question arises: Does there exist a cluster variety with
two non-equivalent cluster structures, and yet either one yields the
same canonical basis? For precisely what we mean by `non-equivalent'
cluster structures, see Definition \ref{def:non-eqiuv-cluster}. In
\cite{YZ}, we made a first step towards a positive answer to such
question - Yes, there exists a cluster variety with two non-equivalent
cluster structures. 

The idea of \cite{YZ} is to use distinct subfans in scattering diagrams
to construct non-equivalent cluster structures. The standard cluster
structure of a cluster variety corresponds to a subfan structure in
the scattering diagram called the \emph{cluster complex}. When the
\emph{Donaldson-Thomas transformation (DT)} \cite{GS16} of a cluster
variety is rational but not cluster, i.e., not a composition of a
sequence of cluster mutations, the scattering diagram will have at
least two distinct subfans and they will provide, at the very least,
two non-equivalent cluster atlases (Definition \ref{def:non-eqiuv-cluster}).
If we further show that these two cluster atlases agree up to codimension,
then we get an example of a cluster variety with two-non equivalent
cluster structures. 

The goal of this paper is to construct a class of cluster varieties
with non-equivalent cluster atlases generalizing the example we give
in \cite{YZ}, using the geometry of Looijenga pairs \cite{L81} and
the \emph{folding} techniques we developed in \cite{YZ} in the context
of scattering diagrams. 

\subsection{Looijenga pairs and rank $2$ cluster varieties. }

A \emph{Looijenga pair} $(Y,D)$ is a smooth projective surface $Y$
together with a connected nodal singular curve $D\in|-K_{Y}|$. By
the adjunction formula, $p_{a}(D)=1$. So $D$ is either an irreducible
rational curve with a single node or a cycle of smooth rational curves.
Since $K_{Y}+D$ is trivial, the complement $U=Y\setminus D$ is a
log Calabi-Yau surface. Write $D=D_{1}+\cdots+D_{r}$ where $D_{i}\,(1\leq i\leq r)$
is irreducible. Let 
\[
D^{\perp}:=\left\{ \alpha\in{\rm Pic}(Y)\mid\alpha\cdot\left[D_{i}\right]=0\text{ for all }i\right\} 
\]
and
\[
T_{(D^{\perp})^{*}}:=\left(D^{\perp}\right)^{*}\otimes\mathbb{C}^{*}.
\]
By the Global Torelli Theorem for Looijenga pairs (cf. Theorem 1.8
of \cite{GHK12}), we could view $T_{(D^{\perp})^{*}}$ as the \emph{period
domain} of the Looijenga pair $(Y,D)$. We say $(Y,D)$ is \emph{positive}
if the intersection matrix given by $(D_{i}\cdot D_{j})$ is\emph{
not }negative semidefinite. We say a Looijenga pair $(Y,D)$ is \emph{generic}
if $Y$ does not contain a smooth rational curve with self-intersection
$-2$ and disjoint from $D$. In this paper, unless we say otherwise,
we assume that our Looijenga pair is generic. We say a birational
morphism $\rho:(Y,D)\rightarrow(\overline{Y},\overline{D})$ is a
\emph{toric model }for $(Y,D)$ if $\overline{Y}$ a smooth projective
toric surface and $D$ is the strict transform of the toric boundary
$\overline{D}$ of $\overline{Y}$. 

By \emph{rank $2$ cluster varieties}, we mean cluster varieties whose
seeds have exchange matrices of rank $2$. In \cite{GHK15}, the relationship
between Looijenga pairs and rank $2$ cluster varieties is explored.
Given a generic Looijenga pair $(Y,D)$ together with a toric model
$\rho:(Y,D)\rightarrow(\overline{Y},\overline{D})$, one can construct
a skew-symmetric cluster seed ${\bf s}$ using the combinatorial data
of the exceptional curves of the toric model. Let $K$ be the kernel
of the skew-symmetric form of ${\bf s}$. This kernel $K$ has codimension
$2$ and therefore the corresponding $\mathcal{X}$-cluster variety
$\mathcal{X}_{{\bf s}}$ is a rank $2$ cluster variety. By Theorem
5.5 of \cite{GHK15}, there is an isomorphism $K\simeq D^{\perp}$.
The space $\mathcal{X}_{{\bf s}}$ fibers over $T_{K^{*}}\simeq T_{(D^{\perp})^{*}}$
and the generic fibers in the family $\mathcal{X}_{{\bf s}}\rightarrow T_{(D^{\perp})^{*}}$,
up to codimension $2$, are log Calabi-Yau surfaces deformation equivalent
to $U:=Y\setminus D$ (see subsection \ref{subsec:Relationship-to-cluster}
and section 5 of \cite{GHK15} for more details). Moreover, we can
view $\mathcal{X}_{{\bf s}}$ as the universal family of $U$ (cf.
Theorem \ref{thm:rank 2 clus. var. and log CY surf.} and Theorem
5.5 of \cite{GHK15}). This relationship goes the other way, that
is, we could start with a skew-symmetric seed ${\bf s}$ of rank $2$
and construct the corresponding Looijenga pair. To get rid of the
skew symmetric condition, we could allow singularities for our Looijenga
pairs. Given these established relationships between Looijenga pairs
and rank $2$ cluster varieties, the main theme of this paper is to
understand the theory of rank $2$ cluster varieties from the geometry
of Looijenga pairs. As we will see later, the very first application
along this line is the well-known example of cluster varieties associated
to the \emph{Markov quiver}, the main example we explored in \cite{YZ}. 

Since the theory of acyclic cluster varieties is well-understood,
in this paper, we will focus on the cases where the corresponding
rank $2$ cluster variety associated to a Looijenga pair $(Y,D)$
is \emph{not} acyclic, or equivalently, where $D^{\perp}\simeq{\bf D}_{n}\,(n\geq4)$
or ${\bf E}_{n}\,(n=6,7,8)$ (cf. Proposition \ref{prop:classification rank 2 clus. var.}).
In \emph{positive non-acyclic} cases, we also denote the corresponding
$\mathcal{X}$-cluster variety by $\mathcal{X}_{D^{\perp}}$ when
we want to emphasize the particular deformation type of $U=Y\setminus D$. 

\subsection{Main results.}

Let $\mathfrak{D}_{{\bf s}}$ be the scattering diagram associated
to a seed ${\bf s}$ constructed from a toric model for a positive,
non-acyclic Looijenga pair. A natural question arises: Does the action
of the Weyl group of $(Y,D)$ on $\mathcal{X}_{{\bf s}}$ preserve
the cluster complex in $\mathfrak{D}_{{\bf s}}$? The answer is positive
(Proposition \ref{prop:Weyl group preseves cluster complex}): The
Weyl group acts faithfully on the scattering diagram as a subgroup
of the cluster modular group and in particular, preserves the cluster
complex.

The action of the Weyl group on $\mathfrak{D}_{{\bf s}}$ enables
us to have a simple, geometric understanding of the Donaldson-Thomas
transformation\emph{ }${\rm DT}_{\mathcal{X}_{D^{\perp}}}$ of $\mathcal{X}_{D^{\perp}}$
(for details of the Donaldson-Thomas transformations of cluster varieties,
see \cite{GS16}). In particular, in subsection \ref{subsec:Factorizations-of-DT-transformat}
and subsection \ref{subsec:Factorizations-of-DT-transformat-1}, we
show that there exists a special element ${\bf w}$ of order $2$
in the Weyl group, such that ${\bf w}$ either agrees with or approximates
${\rm DT}$\footnote{See Remark \ref{rem:It-follows-from} for the difference between ${\bf w}$
and ${\rm DT}_{\mathcal{X}_{\mathbf{D}_{n}}}$ when $n\geq5$ and
Appendix \ref{sec:Mutation-sequences-for} for the difference between
${\bf w}$ and ${\rm DT}_{\mathcal{X}_{\mathbf{E}_{6}}}$.}. Thus, we prove the following theorem:
\begin{thm}
(Theorem \ref{Thm:The-Donaldson-Thomas-transformat}, Theorem \ref{Thm:The-Donaldson-Thomas-transformat-1})
The Donaldson-Thomas transformation of $\mathcal{X}_{D^{\perp}}$
for $D^{\perp}\simeq{\bf D}_{n}\,(n\geq4)$ or ${\bf E}_{n}\,(n=6,7,8)$
is cluster, that is, it can be factored as compositions of cluster
transformations.
\end{thm}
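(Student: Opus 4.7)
The plan is to leverage the preceding theorem, which embeds $W(Y,D)$ faithfully into the cluster modular group of $\mathcal{X}_{\mathbf{s}}$. Any element in the image of this embedding is by construction a cluster transformation (a composition of mutations together with a seed automorphism), so if I can exhibit an involution $\mathbf{w} \in W(Y,D)$ such that $\mathrm{DT}_{\mathcal{X}_{D^{\perp}}}$ either equals $\mathbf{w}$, or differs from $\mathbf{w}$ by an explicitly written finite mutation sequence, then $\mathrm{DT}_{\mathcal{X}_{D^{\perp}}}$ is automatically cluster.

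First I would construct the candidate $\mathbf{w}$ type by type inside $W(D^{\perp})$. In types $\mathbf{D}_n$ with $n$ even, $\mathbf{E}_7$ and $\mathbf{E}_8$, the element $-\mathrm{id}$ lies in the Weyl group and serves as $\mathbf{w}$; in types $\mathbf{D}_n$ with $n$ odd and in $\mathbf{E}_6$, $-\mathrm{id}$ is not in $W$, and one uses instead the involution realizing $-\mathrm{id}$ composed with the nontrivial Dynkin diagram automorphism, which still has order $2$. In each case I would record an explicit reduced expression of $\mathbf{w}$ as a product of simple reflections, which by the preceding theorem immediately yields a concrete sequence of cluster transformations realizing $\mathbf{w}$ on $\mathcal{X}_{\mathbf{s}}$.

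Next, I would compare $\mathbf{w}$ with $\mathrm{DT}_{\mathcal{X}_{D^{\perp}}}$ using the characterization from \cite{GS16}: the Donaldson-Thomas transformation is the unique cluster automorphism whose tropicalization sends the positive cluster chamber of $\mathfrak{D}_{\mathbf{s}}$ to the negative cluster chamber. I would analyze the action of $\mathbf{w}$ on the cluster complex and show that in the cases $\mathbf{D}_4$, $\mathbf{E}_7$ and $\mathbf{E}_8$ the two tropicalizations agree, so that $\mathbf{w} = \mathrm{DT}_{\mathcal{X}_{D^{\perp}}}$ outright. In the remaining cases ($\mathbf{D}_n$ for $n \geq 5$ and $\mathbf{E}_6$), $\mathbf{w}$ and $\mathrm{DT}$ differ by a transformation supported on a small subfan of $\mathfrak{D}_{\mathbf{s}}$, and I would identify this discrepancy explicitly and factor it as a finite mutation sequence.

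The main obstacle will be this last step in the two mismatched families. There is no abstract reason a priori that the discrepancy between $\mathbf{w}$ and $\mathrm{DT}$ should be cluster, so it must be exhibited by hand: one has to locate precisely the chamber that $\mathbf{w}$ sends the positive chamber to, identify how it differs from the negative chamber, and construct a mutation sequence between them using broken line calculations in $\mathfrak{D}_{\mathbf{s}}$. This is the computation to be deferred to the remark and appendix cited in the statement, and getting the combinatorics correct in the $\mathbf{E}_6$ case will be the most delicate, since the nontrivial diagram automorphism of $\mathbf{E}_6$ enters nontrivially and the scattering diagram carries no ambient toric symmetry to exploit.
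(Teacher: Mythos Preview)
Your overall strategy matches the paper's: exhibit a Weyl group element $\mathbf{w}$ (which is cluster by the preceding theorem), show it carries $\mathcal{C}_{\mathbf{s}}^{+}$ to a chamber close to $\mathcal{C}_{\mathbf{s}}^{-}$, and then bridge the remaining gap by further mutations. You also correctly predict which types need bridging ($\mathbf{D}_n$ for $n\geq 5$ and $\mathbf{E}_6$) and which do not ($\mathbf{D}_4$, $\mathbf{E}_7$, $\mathbf{E}_8$).

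There are two points where your proposal diverges from the paper and where the argument as written would not go through. First, your candidate $\mathbf{w}$ is the longest element of $W(D^\perp)$. The paper does \emph{not} use the longest element; for $\mathbf{D}_n$ it takes $\mathbf{w}=\mathbf{w}_5\circ\cdots\circ\mathbf{w}_{n+2}$ with $\mathbf{w}_i=r_{l-E_1-E_3-E_i}\circ r_{l-E_2-E_4-E_i}$, and for $\mathbf{E}_n$ it uses explicit products of reflections in carefully chosen (non-simple, mutually orthogonal) roots. The point is not that $\mathbf{w}$ acts as $-\mathrm{id}$ on $D^\perp$, but that the \emph{mutation sequence realizing $\mathbf{w}$} produces a $\mathbf{c}$-matrix (equivalently $\mathbf{g}$-matrix) of a very specific block shape; for $\mathbf{D}_n$ this is Lemma~\ref{lem:c-matrix-reflection}, proved by induction on $n$ by tracking how $\mathbf{w}_i$ moves a single root. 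Your proposal gives no mechanism for computing the $\mathbf{g}$-matrix of the chamber $\mathbf{w}\cdot\mathcal{C}_{\mathbf{s}}^{+}$, and the Weyl action on $D^\perp\simeq K\subset N$ alone does not determine the action on $M_{\mathbb{R}}$.

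Second, and more seriously, the bridging step for the infinite family $\mathbf{D}_n$ cannot be done ``by hand'': you need a uniform argument valid for all $n\geq 4$. The paper's device is that the block form of the $\mathbf{g}$-matrix isolates the discrepancy in a $4\times 4$ block which is $\Pi'$-invariant for $\Pi'=\langle(12),(34)\rangle$; folding by $\Pi'$ reduces the question to the Kronecker $2$-quiver, whose cluster complex is known to fill the plane minus a single ray, so the negative chamber is automatically reachable. Without an argument of this kind, you cannot conclude for all $n$. For $\mathbf{E}_6,\mathbf{E}_7,\mathbf{E}_8$ the paper does proceed by direct computation (Appendix~\ref{sec:Mutation-sequences-for}), so there your plan is adequate.
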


Another main goal of this paper is to apply the folding technique
we developed in \cite{YZ} to families of log Calabi-Yau surfaces.
Let ${\bf s}_{0}$ be a general initial seed, not necessarily constructed
from a Looijenga pair. Suppose ${\bf s}_{0}$ has certain symmetry
(cf. subsection \ref{eq: symmetry of seed}), then recall that in
\cite{YZ} we showed that there is a quotient construction called
\emph{folding} that produces a new seed $\overline{{\bf s}_{0}}$
. Moreover, this quotient construction extends to the level of scattering
diagrams and we can recover $\mathfrak{D}_{\overline{{\bf s}_{0}}}$
from $\mathfrak{D}_{{\bf s}_{0}}$. The cluster complex $\Delta_{{\bf s}_{0}}^{+}$
in $\mathfrak{D}_{{\bf s}_{0}}$, when restricted to $\mathfrak{D}_{\overline{{\bf s}_{0}}}$,
can break into distinct subfans. Denote by $\overline{\Delta_{{\bf s}_{0}}^{+}}$
the restriction of $\Delta_{{\bf s_{0}}}^{+}$ in $\mathfrak{D}_{\overline{{\bf s}_{0}}}$.
In particular, $\overline{\Delta_{{\bf s}_{0}}^{+}}$ always contains
the cluster complex $\Delta_{\overline{{\bf s}_{0}}}^{+}$. Denote
by $\mathcal{A}_{{\rm prin},{\bf s}_{0}}$ (resp. $\mathcal{A}_{{\rm prin},\overline{{\bf s}_{0}}}$
) the cluster variety of $\mathcal{A}_{{\rm prin}}$-type associated
to ${\bf s}_{0}$ (resp. $\overline{{\bf s}_{0}}$). If two subfans
in $\overline{\Delta_{{\bf s}_{0}}^{+}}$ are distinct, i.e., they
have trivial intersection, the wall-crossing between them will be
non-cluster, but we can nevertheless build a variety $\mathcal{A}_{{\rm prin},\overline{\Delta_{{\bf s}_{0}}^{+}}}$
using all chambers in $\overline{\Delta_{{\bf s}_{0}}^{+}}$. 

In cases where $(Y,D)$ is positive non-acyclic, if ${\bf s}$ is
the seed constructed from $(Y,D)$, there is a way to maximally fold
${\bf s}$ to a new seed $\overline{{\bf s}}$. The $\mathcal{X}$-cluster
variety $\mathcal{X}_{\overline{{\bf s}}}$ corresponds to the maximally
degenerate subfamily of the universal family of $U=Y\setminus D$
(cf. subsection \ref{subsec:Folding-and-degenerate}). The action
of the special Weyl group element ${\bf w}$ we use to factorize ${\rm DT}$
into cluster transformations restricts to an action on the scattering
diagram $\mathfrak{D}_{\overline{{\bf s}}}$ associated to $\bar{{\bf s}}$
and permutes distinct subfans in $\mathfrak{D}_{\overline{{\bf s}}}$:
\begin{thm}
\label{thm:(Theorem-)-The}(Theorem \ref{thm:The-scattering-diagram})
The scattering diagram $\mathfrak{D}_{\overline{{\bf s}}}$ has two
distinct subfans corresponding to $\Delta_{\overline{{\bf s}}}^{+}$
and $\Delta_{\overline{{\bf s}}}^{-}$ respectively. The action of
Weyl group element ${\bf w}$ on $\mathfrak{D}_{{\bf s}}$ descends
to $\mathfrak{D}_{\overline{{\bf s}}}$ and interchanges $\Delta_{\overline{{\bf s}}}^{+}$
and $\Delta_{\overline{{\bf s}}}^{-}$. In particular, if we build
a variety $\mathcal{\tilde{A}}_{{\rm prin},\bar{{\bf s}}}$ using
both $\Delta_{\overline{{\bf s}}}^{+}$ and $\Delta_{\overline{{\bf s}}}^{-}$,
then $\Delta_{\overline{{\bf s}}}^{+}$ and $\Delta_{\overline{{\bf s}}}^{-}$
provide non-equivalent atlases of cluster torus charts for $\mathcal{\tilde{A}}_{{\rm prin},\bar{{\bf s}}}$.
\end{thm}

When $D^{\perp}\simeq\mathbf{D}_{4}$, ${\bf s}$ is the seed given
by the ideal triangulation of $4$-punctured sphere and $\overline{{\bf s}}$
is the seed given by the ideal triangulation of $1$-punctured torus
and the corresponding quiver is the Markov quiver. Thus, we recover
the main example we investigated in \cite{YZ}. 

In the future, it will be interesting to further investigate whether
the two atlases provided by $\Delta_{\bar{{\bf s}}}^{\pm}$ agree
up to codimension $2$ and yield the same canonical basis. 

\subsection*{Acknowledgements.}

First I want to thank my advisor Sean Keel for suggesting the initial
direction of this project. I also want to thank Travis Mandel for
clarifying my confusion about rank $2$ cluster varieties and for
his numerous helpful comments on the initial draft of this paper.

\section{Preliminaries of Looijenga pairs\label{sec:Preliminaries-of-Looijenga}}

Throughout this paper, we work over the field $\mathbb{C}$.
\begin{defn}
A \emph{Looijenga pair} $(Y,D)$ is a smooth projective surface $Y$
with a connected singular nodal curve $D\in|-K_{Y}|$.
\end{defn}

By adjunction formula, the arithmetic genus of $D$ is $1$. So $D$
is either an irreducible genus $1$ curve with a single node or a
cycle of smooth rational curves. In this paper, we only consider cases
where $D$ is a cycle of smooth rational curves with \textbf{at least
$3$ }irreducible components\textbf{.}

Let $U=Y\setminus D$. Then $U$ is a\emph{ log Calabi-Yau surface},
that is, there exists a canonical holomorphic volume $\omega$, unique
up to scaling, such that for any normal crossing compactification
$U\subset Y^{'}$, $\omega$ has at worst a simple pole along each
irreducible component of the boundary $D^{'}=Y^{'}\setminus U$.
\begin{defn}
\label{def:Let--be}Let $(Y,D)$ be a Looijenga pair. Given a smooth
projective toric surface $\overline{Y}$ together with its toric boundary
$\overline{D}:=Y\setminus\mathbb{G}_{m}^{2}$, i.e., $\overline{D}$
is the union of toric divisors of $\overline{Y}$, we say that $(\overline{Y},\overline{D})$
is a \emph{toric model} for $\left(Y,D\right)$ if there exists a
birational morphism $\rho:Y\rightarrow\overline{Y}$ such that $\rho$
is a blowup at finite number of points in the smooth locus of $\overline{D}$
(we allow infinitely near points) and $D$ is the strict transform
of $\overline{D}$. We say $(Y',D^{'})$ is a \emph{simple toric blowup}
of $(Y,D)$ if $Y^{'}$ is the blowup of $Y$ at a nodal point of
$D$ and $D^{'}$ is the reduced inverse image of $D$. A\emph{ toric
blowup} of $(Y,D)$ is a composition of simple toric blowups.
\end{defn}

The following lemma shows that each Looijenga pair has a very elementary
construction via a toric model:
\begin{lem}
(Proposition 1.3 in \cite{GHK11}) Given a Looijenga pair $(Y,D)$,
there exists a toric blowup $(Y^{'},D^{'})$ of $(Y,D)$ such that
$(Y^{'},D^{'})$ has a toric model.
\end{lem}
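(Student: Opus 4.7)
The plan is to prove the lemma by induction on the Picard rank $\rho(Y)$, using an MMP-style argument adapted to the log structure. The idea is that every $(-1)$-curve of $Y$ intersects $D$ very restrictively, so we can contract them one at a time and reach a toric surface.

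The first and key observation is that, by the adjunction formula, any $(-1)$-curve $E\subset Y$ that is not a component of $D$ satisfies
\[
-2 = E\cdot(K_Y+E) = -E\cdot D + E^2 = -E\cdot D - 1,
\]
hence $E\cdot D = 1$. Since local intersection multiplicities are nonnegative, $E$ meets $D$ at exactly one point with local multiplicity $1$. If that point were a node of $D$ lying on two components $D_i,D_{i+1}$, then $E$ would hit both, forcing $E\cdot D\ge 2$, a contradiction. Thus $E$ meets $D$ transversally at a single smooth point of some $D_i$. The contraction $\pi: Y\to Y_0$ of such an $E$ is then an isomorphism away from $E$ and sends $D$ to a nodal anticanonical cycle $D_0=\pi_*D$ with $D_{0,i}^2 = D_i^2+1$, preserving the Looijenga structure. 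Since $\rho(Y_0)=\rho(Y)-1$, we may invoke the inductive hypothesis on $(Y_0,D_0)$: a toric blowup $(Y_0',D_0')$ admits a toric model, and pulling that structure back through $\pi$ supplies a toric model for a toric blowup of $(Y,D)$.

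The inductive step terminates when $Y$ contains no $(-1)$-curve outside $D$. In this situation every $(-1)$-curve of $Y$ lies in $D$, and the next step is to contract such a boundary component $D_i$ with $D_i^2 = -1$, which is the inverse of a simple toric blowup: contracting $D_i$ merges the two transversal points $D_i\cap D_{i-1}$ and $D_i\cap D_{i+1}$ into a single nodal point of the image cycle, provided the tangent directions of $D_{i-1}$ and $D_{i+1}$ at that image point are distinct. If they happen to agree, perform a simple toric blowup of $(Y,D)$ at an appropriate node first; this introduces a new $(-1)$-component that can then be contracted safely, and eventually one reaches a minimal surface. Since $-K_Y=D$ is effective, $Y$ is rational, and iterated contractions of boundary $(-1)$-curves reduce us to $\mathbb{P}^2$ or a Hirzebruch surface $\mathbb{F}_n$; both are smooth projective toric surfaces and the residual cycle $D$ is, up to further simple toric blowups needed to attain at least three boundary components, the toric boundary of such a surface.

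The main obstacle I expect is Step 3, namely handling the case where contracting a boundary $(-1)$-curve would destroy transversality at the resulting node. The resolution is purely combinatorial: one checks that a single well-chosen toric blowup at an adjacent node decouples the tangent directions of the two neighbors of $D_i$, after which the contraction goes through. Bookkeeping must show that the process terminates, for which the right invariant is the lexicographic pair $\bigl(\rho(Y),\,\#\{i : D_i^2=-1\}\bigr)$: each internal contraction strictly decreases $\rho(Y)$, and the toric blowups needed in Step 3 are finite in number because every toric blowup strictly decreases the number of pairs of neighboring boundary curves with matching tangents at a potential contraction point. Once termination is established, composing the inverse contractions with the toric structure on the minimal model produces the desired toric model on the toric blowup of $(Y,D)$.
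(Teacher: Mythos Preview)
The paper does not give its own proof of this lemma; it simply cites Proposition~1.3 of \cite{GHK11}. So there is nothing in the paper to compare against, and your attempt has to stand on its own. The overall MMP-style descent is the right idea, and your key observation that any $(-1)$-curve $E\not\subset D$ meets $D$ transversally at a single smooth point is correct and is exactly what makes the non-toric contractions compatible with the Looijenga structure.

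The genuine gap is the base case. Your assertion that on a minimal rational surface the residual cycle $D$ is, ``up to further simple toric blowups \dots, the toric boundary of such a surface'' is false as stated. Take $(\mathbb{P}^{2},\,C+L)$ with $C$ a smooth conic and $L$ a line meeting $C$ in two points: this is a minimal Looijenga pair, and toric blowups only insert new $(-1)$-components at nodes---they never convert the conic into a chain of lines. What is actually required is that after suitable toric blowups a \emph{non-boundary} $(-1)$-curve appears (in this example, the strict transform of the tangent line $T_pC$ after three toric blowups), which one then contracts. But that contraction lands on a surface of Picard rank strictly larger than $\rho(\mathbb{P}^{2})=1$, so your induction on $\rho(Y)$ does not close. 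The usual remedy is to induct on the charge $Q(Y,D)=12-K_Y^{2}-r$, which is invariant under toric blowups and drops by one under contraction of a non-boundary $(-1)$-curve; the substantive step---which you have not supplied---is to show that $Q>0$ guarantees such a curve exists after some toric blowup, and that $Q=0$ forces the pair to be toric.

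A smaller point: your ``main obstacle'' in Step~3 is a phantom. If $D_i$ is a boundary $(-1)$-curve and $r\ge 3$, then $D_{i-1}$ and $D_{i+1}$ meet $D_i$ at the two \emph{distinct} nodes of $D$ lying on $D_i$; under the blow-down these correspond to distinct tangent directions at the image point, so the contracted cycle is automatically nodal. No preparatory toric blowup is ever needed there, and your proposed termination invariant is not doing any work.
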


Since a toric blowup does not change the log Calabi-Yau surface we
are looking at, throughout this paper, we assume that a Looijenga
pair has a toric model.
\begin{defn}
Given a Looijenga pair $(Y,D)$, an \emph{orientation} of $D$ is
an orientation of the dual graph of $D$.
\end{defn}

For the rest of this paper, we always fix an orientation of $D$ and
an ordering $D=D_{1}+D_{2}+\cdots+D_{r}(r\geq3)$ of the irreducible
components of $D$ compatible with our choice of the orientation.
\begin{defn}
Given a Looijenga pair $(Y,D)$ where $D=D_{1}+D_{2}+\cdots+D_{r}$,
we say $(Y,D)$ is\emph{ positive} if the intersection matrix $(D_{i}\cdot D_{j})$
is \emph{not }negative semidefinite.
\end{defn}

\subsection{Period mappings.}

By Lemma 2.1 of \cite{GHK12}, an orientation of $D$ canonically
identifies ${\rm Pic}^{0}(D)$ with $\mathbb{G}_{m}$.
\begin{defn}
Let 
\[
D^{\perp}:=\left\{ \alpha\in{\rm Pic}\left(Y\right)\mid\alpha\cdot\left[D_{i}\right]=0\text{ for all }i\right\} 
\]
and
\[
T_{\left(D^{\perp}\right)^{*}}:={\rm Hom}\left(D^{\perp},{\rm Pic}^{0}(D)\right)\simeq{\rm Hom}\left(D^{\perp},\mathbb{G}_{m}\right).
\]
We call the following homomorphism $\phi_{Y}\subset T_{\left(D^{\perp}\right)^{*}}$
the \emph{period point} of $(Y,D)$:
\begin{align*}
\phi_{Y}:D^{\perp} & \rightarrow{\rm Pic}^{0}(D)\simeq\mathbb{G}_{m}\\
L & \mapsto L\mid_{D}.
\end{align*}
Recall that $U=Y\setminus D$ has a canonical volume form $\omega$,
unique up to scaling, with simple poles along $D$. According to the
theory of extensions of mixed Hodge structures as developed in \cite{Ca85},
the mixed Hodge structure on $H^{2}\left(U\right)$ is classified
by periods of $\omega$ over cycles in $H_{2}(U,\mathbb{Z})$. By
Proposition 3.12 of \cite{F}, the data of periods of $\omega$ over
cycles in $H_{2}(U,\mathbb{Z})$ are equivalent to the homomorphism
$\phi_{Y}$. Hence the naming ``period point'' of $\phi_{Y}$ is
justified and $\phi_{Y}$ describes the mixed Hodge structure of $U$.
See Theorem 1.8 of \cite{GHK12} for the full statement of the Global
Torelli Theorem for Looijenga pairs. For the description of moduli
stacks of Looijenga pairs via period domains, see section 6 of \cite{GHK12}.
\end{defn}

\subsection{Admissible groups and Weyl groups.}

Given a Looijenga pair $(Y,D)$, the cone 
\[
\left\{ x\in{\rm Pic}\left(Y\right)_{\mathbb{R}}\mid x^{2}>0\right\} 
\]
has two connected components, let $\mathcal{C}^{+}$ be the component
containing all ample classes. Given an ample class $H$ in ${\rm Pic}(Y)$,
let 
\[
\tilde{\mathcal{M}}:=\left\{ E\in{\rm Pic}\left(Y\right)\mid E^{2}=K_{Y}\cdot E=-1\text{ and }E\cdot H>0\right\} 
\]
By Lemma 2.13 of \cite{GHK12}, $\tilde{\mathcal{M}}$ is independent
of the ample class we choose. Let $\mathcal{C}^{++}$ be the subcone
in $\mathcal{C}^{+}$ defined by the inequalities $x\cdot E\geq0$
for all $E$ in $\tilde{\mathcal{M}}$.
\begin{defn}
Given a Looijenga pair $(Y,D)$, we define the \emph{admissible group}
of $(Y,D)$, denoted by ${\rm Adm}_{(Y,D)}$, to be the subgroup of
the automorphism group of the lattice ${\rm Pic}(Y)$ preserving the
boundary classes $[D_{i}]$ and the cone $\mathcal{C}^{++}$.
\begin{defn}
Let $(Y,D)$ be a Looijenga pair.
\end{defn}

(1) An \emph{internal $(-2)$-curve }of $(Y,D)$ is a smooth rational
curve on $Y$of self-intersection $(-2)$ and disjoint from $D$.

(2) We say $(Y,D)$ is \emph{generic} if it has no internal $(-2)$-curves.
\begin{defn}
Let $(Y,D)$ be a Looijenga pair.

(1) We say a class $\left[\alpha\right]$ in ${\rm Pic}(Y)$ is a
\emph{root} if there is a family $\left(\mathcal{Y},\mathcal{D}\right)/S$,
a path $\gamma:[0,1]\rightarrow S$ and identifications
\[
\left(Y,D\right)\simeq\left(\mathcal{Y}_{\gamma(0)},\mathcal{D}_{\gamma(0)}\right),\quad\left(Y^{'},D^{'}\right)\simeq\left(\mathcal{Y}_{\gamma(1)},\mathcal{D}_{\gamma(1)}\right)
\]
 such that the isomorphism 
\[
H^{2}\left(Y,\mathbb{Z}\right)\simeq H^{2}\left(Y^{'},\mathbb{Z}\right)
\]
induced by the parallel transport along $\gamma$ sends $[\alpha]$
to an internal $(-2)$-curve $C$ on $Y^{'}$.

(2) Denote the set of roots $(Y,D)$ by $\Phi$. Given $\alpha$ in
${\rm \Phi},$ we can define the following reflection automorphism
$r_{\alpha}$ of ${\rm Pic}(Y)$:
\begin{align*}
r_{\alpha}:{\rm Pic}\left(Y\right)\rightarrow & {\rm Pic}\left(Y\right)\\
\beta\mapsto & \beta+\left\langle \alpha,\beta\right\rangle \alpha.
\end{align*}
We define the \emph{Weyl group }$W(Y,D)$ of $(Y,D)$ to be the subgroup
of ${\rm Aut}\left({\rm Pic}\left(Y\right)\right)$ generated by reflections
with respect to roots in $\Phi$.
\end{defn}

\end{defn}

By Theorem 5.1 of \cite{GHK12}, given a Looijenga pair $(Y,D)$,
the Weyl group $W(Y,D)$ is a normal subgroup of the admissible group
${\rm Adm}(Y,D)$.

\subsection{\label{subsec:Relationship-to-cluster}The relationship to rank $2$
cluster varieties.}

Let $(Y,D)$ be a generic Looijenga pair together with a toric model
$\rho:(Y,D)\rightarrow(\overline{Y},\overline{D})$. Let $\{E_{i}\}_{i=1}^{n}$
be the exceptional configuration corresponding to the given toric
model. Let $\overline{\Sigma}$ be the fan of the toric surface $\overline{Y}$
in a lattice $\Lambda\simeq\mathbb{Z}^{2}$. Set $N=\mathbb{Z}^{n}$
with basis $\{e_{i}\}_{i\in I}$ where $I:=\{1,\cdots,n\}$. Let $M$
be the dual lattice of $N$. For each $i\in I$, denote by $\overline{D}_{i}\subset D$
the irreducible component containing the center of the non-toric blowup
with exceptional locus $E_{i}$. Each $\overline{D}_{i}$ corresponds
to a ray in $\overline{\Sigma}$ with the primitive generator $w_{i}$
in $\Lambda$. If we assume that $w_{1},\cdots,w_{n}$ generate the
lattice $\Lambda$, then the map 
\begin{align*}
\varphi:N & \rightarrow\Lambda\\
e_{i} & \mapsto w_{i}
\end{align*}
 is surjective. Moreover, if we fix an isomorphism $\bigwedge^{2}\Lambda\simeq\mathbb{Z}$,
then the wedge product on $\Lambda$ pull-back to an integral valued
skew-symmetric form $\{,\}$ on $N$:
\[
\{n_{1},n_{2}\}=\varphi(n_{1})\wedge\varphi(n_{2}).
\]
Thus, from the fixed data $\left(N,\{,\},I\right)$, we define a seed
${\bf s}=\{e_{i}\}_{i\in I}$. Observe that the kernel $K$ of the
skew-symmetric form on $N$ has rank equal to ${\rm rank}N-2$, or
equivalently, the exchange matrix of ${\bf s}$ has rank $2$. Denote
by $\mathcal{X}_{{\bf s}}$ the corresponding $\mathcal{X}$-cluster
variety constructed from the seed ${\bf s}$. By section 5 of \cite{GHK15},
$\mathcal{X}_{{\bf s}}$ fibers over $T_{K^{*}}$ with generic fiber
agreeing up to codimension $2$ with a log Calabi-Yau surface deformation
equivalent to $Y\setminus D$. More precisely, let $\Sigma$ be the
fan in $M_{\mathbb{R}}$ consisting of rays generated by $-\{e_{i},\cdot\}$.
Denote by ${\rm TV}\left(\Sigma\right)$ the toric variety associated
to $\Sigma$. The projection $M\rightarrow M/K^{\perp}\simeq K^{*}$
induces a map 
\[
\overline{\lambda}:{\rm TV}\left(\Sigma\right)\rightarrow T_{K^{*}}
\]
For each $i$, let $D_{-\{e_{i},\cdot\}}$ be the toric divisor corresponding
to the ray generated by $-\{e_{i},\cdot\}$. Define the closed subscheme
\[
Z_{i}:=D_{-\{e_{i},\cdot\}}\bigcap\overline{V}\left(1+z^{e_{i}}\right)
\]
Let $\mathcal{Y}_{{\bf s}}$ be the blowup of the toric variety ${\rm TV}(\Sigma)$
along $\bigcup_{i=1}^{n}Z_{i}$ and $\mathcal{D}_{{\bf s}}$ the toric
boundary of $\mathcal{Y}_{{\bf s}}$. Let $\lambda:\mathcal{Y}_{{\bf s}}\rightarrow T_{K^{*}}$
be the fibration after the blowup. Then summarize results in section
$5$ of \cite{GHK15}, we have the following theorem:
\begin{thm}
\label{thm:rank 2 clus. var. and log CY surf.}For $\phi\in T_{K^{*}}$
general, $\left(\mathcal{Y}_{{\bf s},\phi},\mathcal{D}_{{\bf s},\phi}\right)$
is a generic Looijenga pair deformation equivalent to $(Y,D)$ with
the period point $\phi$. Moreover, $\mathcal{Y}_{{\bf s},\phi}\setminus\mathcal{D}_{{\bf s},\phi}$
agrees with $\mathcal{X}_{{\bf s},\phi}$ away from codimension $2$.
\end{thm}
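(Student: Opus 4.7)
The plan is to assemble the statement from the constructions of Section 5 of \cite{GHK15} by verifying three things separately: that the fibers of $\lambda:\mathcal{Y}_{\bf s}\to T_{K^*}$ are Looijenga pairs with toric model equivalent to $\rho$, that the period point of the fiber over $\phi$ is $\phi$ itself under the identification $K\simeq D^{\perp}$, and finally that the $\mathcal{X}$-cluster tori glue to $\mathcal{Y}_{\bf s}\setminus\mathcal{D}_{\bf s}$ up to codimension $2$.

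First I would unpack the fiber-wise structure of the blowup. The toric variety ${\rm TV}(\Sigma)$ has toric boundary stratified by the rays $-\{e_i,\cdot\}$, and the closed subscheme $Z_i = D_{-\{e_i,\cdot\}}\cap\overline{V}(1+z^{e_i})$ is a section of the projection $\overline{\lambda}$ restricted to the toric divisor $D_{-\{e_i,\cdot\}}$. Because $\{,\}$ is the pullback of the wedge product under $\varphi:N\to\Lambda$, the relative fan description shows that a general fiber of $\overline{\lambda}$ is the toric surface whose fan is exactly $\overline{\Sigma}$, and the restriction of $Z_i$ to a generic fiber is a single reduced point on the smooth locus of the corresponding toric boundary divisor $\overline{D}_{i}$. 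Hence blowing up $\bigcup_i Z_i$ produces, over a generic $\phi$, a non-toric blowup of $(\overline{Y},\overline{D})$ of the same combinatorial type as $\rho$. The proper transform of $\overline{D}$ then gives the boundary $\mathcal{D}_{{\bf s},\phi}$, which is a cycle of smooth rational curves in $|-K_{\mathcal{Y}_{{\bf s},\phi}}|$, so $(\mathcal{Y}_{{\bf s},\phi},\mathcal{D}_{{\bf s},\phi})$ is a Looijenga pair with toric model $(\overline{Y},\overline{D})$ and exceptional configuration matching $\{E_i\}$. By Definition~\ref{def:Let--be}, this exhibits the explicit deformation from $(Y,D)$ to $(\mathcal{Y}_{{\bf s},\phi},\mathcal{D}_{{\bf s},\phi})$ through varying the centers of the non-toric blowups. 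Genericity (absence of internal $(-2)$-curves) is open and dense: each numerical class $\alpha\in D^{\perp}$ with $\alpha^2=-2$ that could represent an internal $(-2)$-curve cuts out a proper algebraic subset of $T_{K^*}$, so the complement of the countable union is still a dense subset where the fiber is generic.

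Second, I would identify the period. After the orientation-induced identification ${\rm Pic}^0(\overline{D}_i)\simeq\mathbb{G}_m$ and the isomorphism $K\simeq D^{\perp}$ of Theorem~5.5 in \cite{GHK15}, the position on $\overline{D}_i$ of the center of the non-toric blowup contributing $E_i$ is read off from the restriction of the equation $1+z^{e_i}=0$ to the appropriate toric stratum of $\overline{\lambda}^{-1}(\phi)$. A direct comparison shows that the coordinate on $\mathbb{G}_m$ recorded by $\phi$ at the basis vector of $K$ dual to $E_i$ matches the evaluation of the line bundle $\mathcal{O}(E_i)|_{\mathcal{D}_{{\bf s},\phi}}$ in ${\rm Pic}^0(\mathcal{D}_{{\bf s},\phi})$. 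Since classes of the form $E_i$ (together with pullbacks from $\overline{Y}$) span ${\rm Pic}(\mathcal{Y}_{{\bf s},\phi})$, this is enough to recover the full period homomorphism $\phi_{\mathcal{Y}_{{\bf s},\phi}}=\phi$.

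Third, for the cluster-theoretic statement, I would match the $\mathcal{X}$-cluster atlas with charts on $\mathcal{Y}_{\bf s}\setminus\mathcal{D}_{\bf s}$. The initial seed $\bf s$ corresponds to the open torus ${\rm TV}(\Sigma)\setminus\partial{\rm TV}(\Sigma)$ pulled back through the blowup, which remains a torus chart of $\mathcal{Y}_{\bf s}\setminus\mathcal{D}_{\bf s}$. A mutation at $e_i$ corresponds geometrically to performing the non-toric blowup of $Z_i$ before some elementary transformations in the toric model, swapping $\overline{D}_i$-contribution for the opposite divisor; at the level of open subsets of $\mathcal{Y}_{\bf s}\setminus\mathcal{D}_{\bf s}$ this produces a new torus chart glued to the previous by precisely the $\mathcal{X}$-mutation birational map. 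Iterating, the entire cluster atlas embeds into $\mathcal{Y}_{\bf s}\setminus\mathcal{D}_{\bf s}$, and the locus omitted at each mutation lies in the proper transform of the exceptional fiber over $Z_i$, which has codimension $2$. Taking the union over all seeds yields the codimension $2$ identification $\mathcal{X}_{{\bf s},\phi}\simeq\mathcal{Y}_{{\bf s},\phi}\setminus\mathcal{D}_{{\bf s},\phi}$ up to codimension $2$ on each fiber.

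The main obstacle is the third step: verifying that the $\mathcal{X}$-mutation formula agrees on the nose with the birational transition between two toric models for the same Looijenga pair, and that the loci where the charts fail to extend are always of codimension at least $2$. Both facts are the technical heart of \cite{GHK15}, but they are what makes the fiberwise cluster-to-log-Calabi-Yau translation precise.
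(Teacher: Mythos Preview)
The paper does not actually prove this theorem: it is stated as a summary of results from Section~5 of \cite{GHK15}, with no argument given beyond the preceding construction of $\mathcal{Y}_{\bf s}$ and the citation. So there is no ``paper's own proof'' to compare against; what you have written is essentially an expanded version of what the paper does, namely pointing back into \cite{GHK15} for each of the three ingredients. Your decomposition into (i) identifying the fibers as non-toric blowups of $(\overline{Y},\overline{D})$, (ii) matching the period via the isomorphism $K\simeq D^{\perp}$ of Theorem~5.5 of \cite{GHK15}, and (iii) the mutation/elementary-transformation comparison giving the codimension-$2$ agreement, is exactly the logical structure of the cited results, and you correctly flag that the substance of (iii) is the technical core of \cite{GHK15} rather than something one redoes here.

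One small point worth tightening in your sketch: as written in the paper, the fan $\Sigma$ consists only of the rays $-\{e_i,\cdot\}$, so ${\rm TV}(\Sigma)$ and hence $\mathcal{Y}_{{\bf s},\phi}$ are not projective, and the boundary $\mathcal{D}_{{\bf s},\phi}$ is not literally a nodal cycle. The statement that $(\mathcal{Y}_{{\bf s},\phi},\mathcal{D}_{{\bf s},\phi})$ ``is a generic Looijenga pair'' should be read up to the obvious toric completion (adding the $2$-cones of $\overline{\Sigma}$), which does not affect the interior $\mathcal{Y}_{{\bf s},\phi}\setminus\mathcal{D}_{{\bf s},\phi}$ or the codimension-$2$ comparison with $\mathcal{X}_{{\bf s},\phi}$. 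Your first step implicitly assumes this completion when you say the fiber has fan $\overline{\Sigma}$; it would be cleaner to say so explicitly.
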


By Theorem \ref{thm:rank 2 clus. var. and log CY surf.} we could
view a generic fiber in $\mathcal{X}_{{\bf s}}\rightarrow T_{K^{*}}$,
away from codimension $2$, as a log Calabi-Yau surface deformation
equivalent to $U=Y\setminus D$. Moreover, as proved in Theorem 5.5
of \cite{GHK15}, there is natural isomorphism $K\simeq D^{\perp}$
and we can view $\mathcal{X}_{{\bf s}}$ as the universal family of
$U=Y\setminus D$. Hence the title of this paper is justified.

For more detailed description of the relationship between Looijenga
pairs and rank $2$ $\mathcal{X}$-cluster varieties, see section
5 of \cite{GHK15} and section 2 of \cite{TM}.

\subsection{\label{subsec:admissible-cluster modular}Admissible groups and cluster
modular groups.}

Now let ${\bf s}=\left(e_{i}\right)_{i\in I}$ be an initial seed
constructed from a toric model $\rho:(Y,D)\rightarrow(\overline{Y},\overline{D})$
for a generic Looijenga pair $(Y,D)$. Let $\{E_{i}\}_{i\in I}$ be
the exceptional configuration of $\rho$. Let $v_{0}$ be the root
of $\mathfrak{T}_{{\bf s}}$. Given any $v$ in $\mathfrak{T}_{{\bf s}}$,
let $\mathcal{Y}_{{\bf s}_{v}}$ be the blowup of toric variety ${\rm TV}(\Sigma_{{\bf s}_{v}})$
as described in subsection \ref{subsec:Relationship-to-cluster}.
By section 5 of \cite{GHK15}, the birational map 
\[
\mu_{v_{0},v}:T_{M,{\bf s}}\dashrightarrow T_{M,{\bf s}_{v}}
\]
extends to a birational map between blowups of toric varieties, 
\[
\mu_{v_{0},v}:\mathcal{Y}_{{\bf s}}\dashrightarrow\mathcal{Y}_{{\bf s}_{v}}
\]
such that for $\phi\in T_{K^{*}}$ general, $\mu_{v_{0},v}$ restrict
to an biregular isomorphism $\mathcal{Y}_{{\bf s},\phi}\setminus\mathcal{D}_{{\bf s},\phi}\rightarrow\mathcal{Y}_{{\bf s}_{v},\phi}\setminus\mathcal{D}_{{\bf s}_{v},\phi}$.
Let $\{E_{i,\phi}\}_{i\in I}$ be the exceptional configuration of
\[
\rho_{\phi}:\left(\mathcal{Y}_{{\bf s},\phi},\mathcal{D}_{{\bf s},\phi}\right)\rightarrow(\overline{Y}_{{\bf s}},\overline{D}_{{\bf s}})
\]
Now suppose ${\bf s}_{v}$ is isomorphic to ${\bf s}$ and $\mu_{v_{0},v}^{t}:M_{\mathbb{R}}\rightarrow M_{\mathbb{R}}$\footnote{This is the geometric tropicalization using the min-plus convention.
Fock-Goncharov tropicalization in \cite{GHKK18} uses the max-plus
convention. For more details on this two tropicalizations, see section
2 of \cite{GHKK18}.} restricts to an isomorphism between fans $\Sigma_{{\bf s}}\rightarrow\Sigma_{{\bf s}_{v}}$.
Then the biregular isomorphism 
\[
\mu_{v_{0},v}:\mathcal{Y}_{{\bf s},\phi}\setminus\mathcal{D}_{{\bf s},\phi}\rightarrow\mathcal{Y}_{{\bf s}_{v},\phi}\setminus\mathcal{D}_{{\bf s}_{v},\phi}
\]
extends to a biregular isomorphism$\mu_{v_{0},v}:\mathcal{Y}_{{\bf s},\phi}\rightarrow\mathcal{Y}_{{\bf s}_{v},\phi}$.
Let $\gamma:{\bf s}\rightarrow\mathbf{s}_{v}$ be a seed isomorphism
compatible with the isomorphism between fans $\mu_{v_{0},v}^{t}:\Sigma_{{\bf s}}\rightarrow\Sigma_{{\bf s}_{v}}$,
i.e., for any $i\in I$, 
\[
\mu_{v_{0},v}^{t}(-\{e_{i},\cdot\})=\{-\gamma(e_{i}),\cdot\}
\]
The isomorphism 
\[
\xyR{2pc}\xyC{3pc}\xymatrix{T_{M,{\bf s}_{v}}\ar[r]^{\gamma^{-1}}\ar[d] & T_{M,{\bf s}}\ar[d]\\
T_{K^{*}}\ar[r]^{\gamma^{-1}} & T_{K^{*}}
}
\]
induced by $\gamma^{-1}$ extends to an isomorphism
\[
\xyR{2pc}\xyC{3pc}\xymatrix{\mathcal{Y}_{{\bf s}_{v}}\ar[r]^{\gamma^{-1}}\ar[d] & \mathcal{Y}_{{\bf s}}\ar[d]\\
T_{K^{*}}\ar[r]^{\gamma^{-1}} & T_{K^{*}}
}
\]
Now consider the isomorphism 
\[
\gamma^{-1}\circ\mu_{v_{0},v}:\left(\mathcal{Y}_{{\bf s},\phi},\mathcal{D}_{{\bf s},\phi}\right)\rightarrow\left(\mathcal{Y}_{{\bf s},\gamma^{-1}\phi},\mathcal{D}_{{\bf s},\gamma^{-1}\phi}\right)
\]
Let $\left\{ E_{i,\gamma^{-1}\phi}\right\} _{i\in I}$ be the exceptional
configuration of 
\[
\rho_{\gamma^{-1}\phi:}\left(\mathcal{Y}_{{\bf s},\gamma^{-1}\phi},\mathcal{D}_{{\bf s},\gamma^{-1}\phi}\right)\rightarrow\left(\overline{Y}_{{\bf s}},\overline{D}_{{\bf s}}\right)
\]
Since $\gamma^{-1}\circ\mu_{v_{0},v}$ induces trivial map on the
boundary, 
\[
\left\{ E_{i,\phi}^{'}\right\} _{i\in I}:=\left\{ \left(\gamma^{-1}\circ\mu_{v_{0},v}\right)^{*}\left(E_{i,\gamma^{-1}\phi}\right)\right\} _{i\in I}
\]
is another exceptional configuration of $\rho_{\phi}$ and there exists
an unique element $\delta$ in the admissible group ${\rm Adm}(Y,D)$
that maps $\{E_{i,\phi}\}_{i\in I}$ to $\{E_{i,\phi}^{'}\}_{i\in I}$.
Thus, we can identify $\delta$ with $\gamma^{-1}\circ\mu_{v_{0},v}$
as an element in the cluster modular group.

\subsection{Classification of positive Looijenga pairs.}

The following proposition follows from Theorem 4.4 and Theorem 4.5
of \cite{TM}:
\begin{prop}
\label{prop:classification rank 2 clus. var.}Let $(Y,D)$ be a generic
positive Looijenga pair. Then $\mathcal{X}_{(Y,D)}$ is non-acyclic,
that is, it cannot be constructed from an acyclic seed if and only
if
\[
D^{\perp}\simeq{\bf D}_{n}\,(n\geq4)\text{ or }{\bf E}_{n}\,(n=6,7,8).
\]
\end{prop}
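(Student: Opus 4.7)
The plan is to translate the proposition into a statement about the skew-symmetric seed ${\bf s}$ and then invoke the classification in \cite{TM}. First, by Theorem 5.5 of \cite{GHK15} (the identification $K\simeq D^{\perp}$), the isomorphism type of the lattice-with-form $(N,\{\,,\,\})$ underlying ${\bf s}$ is determined, up to the addition of frozen directions corresponding to toric blowups, by $D^{\perp}$ together with a collection of mutable directions tracking the $(-1)$-curves of the toric model. Positivity of $(Y,D)$ together with the Hodge index theorem forces $D^{\perp}$ to be negative-definite, and under the genericity assumption the $(-2)$-classes appearing in deformations of $Y$ (the roots defined in Section 2) generate $D^{\perp}$ as an ADE root lattice.

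Next, I would check case-by-case which ADE types can actually arise as $D^{\perp}$ for a positive Looijenga pair and whether the corresponding cluster variety is acyclic. Mandel's Theorems 4.4 and 4.5 in \cite{TM} carry out this analysis in two parts: they show that the irreducible ADE types occurring as $D^{\perp}$ for a positive generic Looijenga pair are exactly $A_{n}$, ${\bf D}_{n}\,(n\geq 4)$, and ${\bf E}_{n}\,(n=6,7,8)$; and they show that in the $A_{n}$ case one can choose a toric model whose exceptional configuration yields a linear (hence acyclic) quiver representing ${\bf s}$. Combined, these two facts establish the ``only if'' direction of the proposition.

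The main obstacle is the ``if'' direction: showing that for $D^{\perp}\simeq{\bf D}_{n}$ or ${\bf E}_{n}$, no mutation of ${\bf s}$ produces an acyclic quiver. Since the mutation class is infinite, one cannot proceed by direct enumeration, so a mutation-invariant obstruction is required. The approach in \cite{TM} is to relate the $c$-vectors of any putative acyclic seed in the mutation class to classes of internal $(-2)$-curves on deformations of $Y$, and then to argue that an acyclic representative would force $D^{\perp}$ to admit a finite-type Weyl-chamber structure compatible with the ample cone data of $(Y,D)$. This is consistent only for $A_{n}$ and fails for ${\bf D}_{n}\,(n\geq 4)$ and ${\bf E}_{n}\,(n=6,7,8)$. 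Once this obstruction is in place, matching the cluster-theoretic classification with the geometric classification of positive Looijenga pairs completes the proof.
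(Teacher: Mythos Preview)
Your approach matches the paper's: the paper's entire proof is the sentence ``follows from Theorem~4.4 and Theorem~4.5 of \cite{TM}'', and you correctly identify these as the results to invoke.

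That said, your elaboration of what those theorems contain is partly inaccurate, and your sketch of the ``if'' direction has a gap. You claim the obstruction is that an acyclic representative would force $D^{\perp}$ to admit a ``finite-type Weyl-chamber structure compatible with the ample cone data'', and that this ``fails for ${\bf D}_{n}$ and ${\bf E}_{n}$''. But ${\bf D}_{n}$ and ${\bf E}_{n}$ \emph{are} finite root systems with finite Weyl groups and ordinary chamber decompositions, so as written this criterion does not distinguish them from $A_{n}$. The actual mechanism in \cite{TM} is different. From how the paper uses these results elsewhere, Theorem~4.4 of \cite{TM} provides the explicit toric models over $\mathbb{P}^{2}$ (the ones drawn in Figure~\ref{fig:posacyc}), and Theorem~4.5 shows that in the ${\bf D}_{n}$ and ${\bf E}_{n}$ cases the subspace $U^{\rm trop}(\mathbb{R})$ meets the cluster complex $\Delta_{{\bf s}}^{+}$ only at the origin. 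It is this last fact---that the cluster complex fails to cover the tropical space---that witnesses non-acyclicity, not any incompatibility of Weyl chambers with the ample cone. So while your citation is correct, the argument you attribute to \cite{TM} for the hard direction is not the one actually used, and the one you sketch would not work as stated.
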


Since the theory of acyclic cluster varieties are well-understood,
for the rest of this paper, we will focus only on the cases where
$D^{\perp}\simeq{\bf D}_{n}\,(n\geq4)\text{ or }{\bf E}_{n}\,(n=6,7,8)$.

\section{The geometry of folding\label{sec:The-geometry-of}}

In this section, we first briefly review basics of scattering diagrams
that arise in cluster theory. Readers familiar with the Gross-Siebert
program could skip this part. Then we recall the folding technique
developed in \cite{YZ} in the context of cluster scattering diagrams.
In the last subsection \ref{subsec:Folding-and-degenerate}, we review
the geometric meaning of the folding procedure for $\mathcal{X}$-cluster
varieties associated to log Calabi-Yau surfaces.

\subsection{Cluster scattering diagrams.\label{subsec:Cluster-scattering-diagrams.}}

Let $N$ be a lattice of rank $n$ with dual lattice $M$ and a skew-symmetric
$\mathbb{Z}$-bilinear form $\{,\}:N\times N\rightarrow\mathbb{\mathbb{Z}}.$
Let $I=\{1,2,\cdots,n\}$ be the index set. Fix an initial seed ${\bf s}$,
i.e., ${\bf s}=(e_{i})_{i=1}^{n}$ is a labelled basis of $N$. For
simplicity, we assume that we do not have frozen variables for the
entire paper. In general when we fix the lattice $N$, we also fix
a sublattice $N^{\circ}\subset N$ and positive integers $d_{i}$
for $i\in I$ such that $\{d_{i}e_{i}\mid i\in I\}$ is a basis for
$N^{\circ}$. As in \cite{YZ}, we assume that the original seed ${\bf s}$
is simply-laced, that is, $d_{i}=1$ for all $i\in I$. 

Define 
\[
N^{+}=\{\sum_{i\in I}a_{i}e_{i}\mid a_{i}\in\mathbb{N},\sum_{i\in I}a_{i}>0\}
\]
and the following algebraic torus over $\mathbb{C}$:
\[
T_{M}=M\otimes_{\mathbb{Z}}\mathbb{G}_{m}={\rm Hom}(N,\mathbb{G}_{m})={\rm Spec}(\mathbb{C}[N]).
\]
The skew-symmetric form $\{,\}$ on $N$ gives the algebraic torus
$T_{M}$ a Poisson structure:
\begin{equation}
\{z^{n},z^{n^{'}}\}=\{n,n^{'}\}z^{n+n^{'}}.\label{eq:poisson structure}
\end{equation}
Let $\mathfrak{g}=\bigoplus_{n\in N^{+}}\mathfrak{g}_{n}$ where $\mathfrak{g}_{n}=\mathbb{C}\cdot z^{n}$.
The Poisson bracket $\{,\}$ on $\mathbb{C}[N]$ endows $\mathfrak{g}$
with a Lie bracket $[,]$: $[f,g]=-\{f,g\}$. By \ref{eq:poisson structure},
it is clear that $\mathfrak{g}$ is graded by $N^{+}$, i.e., $[\mathfrak{g}_{n_{1}},\mathfrak{g}_{n_{2}}]\subset\mathfrak{g}_{n_{1}+n_{2}}$
and $\mathfrak{g}$ is skew-symmetric for $\{,\}$ on $N$, i.e.,
$[\mathfrak{g}_{n_{1}},\mathfrak{g}_{n_{2}}]=0$ if $\{n_{1},n_{2}\}=0$.

Define the following linear functional on $N$:
\begin{align}
d:N & \rightarrow\mathbb{Z}\nonumber \\
\sum_{i\in I}a_{i}e_{i} & \mapsto\sum_{i\in I}a_{i}.\label{eq:-11}
\end{align}
For each $k\in\mathbb{Z}_{\geq0}$, there is an Lie subalgebra $\mathfrak{g}^{>k}:=\bigoplus_{n\in N^{+}\mid d(n)>k}\mathfrak{g}_{n}$
of $\mathfrak{g}$ and a $N^{+}$-graded nilpotent Lie algebra $\mathfrak{g}^{\leq k}:=\mathfrak{g}/\mathfrak{g}^{>k}.$
We denote the unipotent Lie group corresponding to $\mathfrak{g}^{\leq k}$
as $G^{\leq k}$. As a set, it is in bijection with $\mathfrak{g}^{\leq k}$,
but the group multiplication is given by the Baker-Campbell-Hausdorff
formula. Given $i<j$, we have canonical homomorphism:
\[
\mathfrak{g}^{\leq j}\rightarrow\mathfrak{g}^{\leq i},\quad G^{\leq j}\rightarrow G^{\leq i}.
\]
Define the pro-nilpotent Lie algebra and its corresponding pro-unipotent
Lie group by taking the projective limits:
\[
\hat{\mathfrak{g}}=\lim_{\longleftarrow}\mathfrak{g}^{\leq k},\quad G=\lim_{\longleftarrow}G^{\leq k}.
\]
We have canonical bijections:
\[
\exp:\mathfrak{g}^{\leq k}\rightarrow G^{\leq k},\quad\exp:\hat{\mathfrak{g}}\rightarrow G.
\]
Given an element $n_{0}$ in $N^{+}$, there is an Lie subalgebra
$\mathfrak{g}_{n_{0}}^{\parallel}:=\bigoplus_{k\in\mathbb{Z}_{>0}}\mathfrak{g}_{k\cdot n_{0}}$
of $\mathfrak{g}$ and its corresponding Lie subgroup $G_{n_{0}}^{\parallel}:=\exp(\widehat{\mathfrak{g}_{n_{0}}^{\parallel}})$
of $G$. By our assumption that $\mathfrak{g}$ is skew-symmetric
for $\{,\}$, $\mathfrak{g}_{n_{0}}^{\parallel}$ and therefore $G_{n_{0}}^{\parallel}$
are abelian. 
\begin{defn}
A \emph{wall} in $M_{\mathbb{R}}$ for $N^{+}$ and $\mathfrak{g}$
is pair $(\mathfrak{d},g_{\mathfrak{d}})$ such that 

1) $g_{\mathfrak{d}}$ is contained in $G_{n_{0}}^{\parallel}$ for
some primitive element $n_{0}$ in $N^{+}$

2) $\mathfrak{d}$ is contained in $n_{0}^{\perp}\subset M_{\mathbb{R}}$
and is a convex rational polyhedral cone of codimension $1$. 
\end{defn}

\begin{defn}
A \emph{scattering diagram} $\mathfrak{D}$ for $N^{+}$ and $\mathfrak{g}$
is a collection of walls such that for every order $k$ in $\mathbb{Z}_{\geq0}$,
there are only finitely many walls $(\mathfrak{d},g_{\mathfrak{d}})$
in $\mathfrak{D}$ with non-trivial image of $g_{\mathfrak{d}}$ under
the projection map $G\rightarrow G^{\leq k}$. 

Given a wall $(\mathfrak{d},g_{\mathfrak{d}})$ such that $\mathfrak{d}$
is contained in $n_{0}^{\perp}$ for a primitive element $n_{0}\in N^{+}$,
we say that $(\mathfrak{d},g_{\mathfrak{d}})$ is \emph{incoming}
if $\{n_{0},\cdot\}$ is contained in $\mathfrak{d}$. Otherwise,
we say that $(\mathfrak{d},g_{\mathfrak{d}})$ is \emph{outgoing}.
\end{defn}

\begin{thm}
\label{thm: uniqueness-incoming walls}(Theorem 1.12 and Theorem 1.21
of \cite{GHKK18}) The equivalence class of a consistent scattering
diagram is determined by its set of incoming walls. Vice versa, let
$\mathfrak{D}_{{\rm in}}$ be a scattering diagram whose only walls
are full hyperplanes, i.e., are of the form $\left(n_{0}^{\perp},g_{n_{0}}\right)$
where $n_{0}$ is a primitive element in $N^{+}$. Then there is a
scattering diagram $\mathfrak{D}$ satisfying:

(1) $\mathfrak{D}$ is consistent,

(2) $\mathfrak{D}\supset\mathfrak{D}_{{\rm in}}$,

(3) $\mathfrak{D}\setminus\mathfrak{D}_{{\rm in}}$ consists only
of outgoing walls.

Moreover, $\mathfrak{D}$ satisfying these three properties is unique
up to equivalence.
\end{thm}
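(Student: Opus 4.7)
The plan is to build $\mathfrak{D}$ by an order-by-order induction with respect to the degree function $d:N^+\to\mathbb{Z}$ from equation (3.2), working inside the nilpotent quotient $\mathfrak{g}^{\leq k}$ (equivalently, $G^{\leq k}$) at each stage and then passing to the inverse limit. At order $k=1$, only degree-$1$ walls contribute, and these are the relevant elements of $\mathfrak{D}_{\rm in}$, which are full hyperplanes and therefore trivially consistent. Inductively, suppose I have a scattering diagram $\mathfrak{D}_k$ that is consistent modulo $\mathfrak{g}^{>k}$, contains the truncation of $\mathfrak{D}_{\rm in}$ to degree $\leq k$, and is outgoing away from $\mathfrak{D}_{\rm in}$. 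To pass to order $k+1$, first adjoin the incoming walls of $\mathfrak{D}_{\rm in}$ of degree $k+1$ to obtain a preliminary $\mathfrak{D}_{k+1}^{\rm pre}$.

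The failure of consistency of $\mathfrak{D}_{k+1}^{\rm pre}$ is detected at joints: for each joint $\mathfrak{j}$ of codimension $2$, choose a small generic loop $\gamma_{\mathfrak{j}}$ around $\mathfrak{j}$, and consider the path-ordered product $g_{\mathfrak{j}} := \mathfrak{p}_{\gamma_{\mathfrak{j}},\mathfrak{D}_{k+1}^{\rm pre}}\in G^{\leq k+1}$. By the inductive hypothesis $g_{\mathfrak{j}}$ lies in the kernel of $G^{\leq k+1}\to G^{\leq k}$, which is the abelian Lie group $\exp\!\bigl(\bigoplus_{d(n)=k+1}\mathfrak{g}_n\bigr)$. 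The key step — the Kontsevich–Soibelman factorization lemma in the form needed here — is to show that $g_{\mathfrak{j}}$ decomposes uniquely as a product of elements in the rank-one groups $G_{n_0}^{\parallel}$ indexed by the primitive normals $n_0$ to $\mathfrak{j}$. Each factor is then absorbed by adding a new outgoing wall with support a half-hyperplane emanating from $\mathfrak{j}$. Because each added wall has degree exactly $k+1$, any further commutators it produces live in $\mathfrak{g}^{>k+1}$, so consistency at lower order and at other joints is preserved, and iterating over all joints yields $\mathfrak{D}_{k+1}$.

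For the uniqueness statement and, equivalently, the assertion that the equivalence class of a consistent $\mathfrak{D}$ is determined by its incoming walls, I would argue again by induction on $k$. Given two consistent diagrams $\mathfrak{D}, \mathfrak{D}'$ agreeing on incoming data, assume they are equivalent modulo $\mathfrak{g}^{>k}$ and consider their difference at order $k+1$. The combined wall-crossings measuring this difference are outgoing and yield the trivial path-ordered product around every generic loop modulo $\mathfrak{g}^{>k+1}$. Using the skew-symmetry property $[\mathfrak{g}_{n_1},\mathfrak{g}_{n_2}]=0$ when $\{n_1,n_2\}=0$, together with the abelian structure on each $G_{n_0}^{\parallel}$, the vanishing of these loop products forces the outgoing wall contributions to match ray by ray, giving equivalence at order $k+1$.

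The main obstacle is the Kontsevich–Soibelman factorization at a joint: one must show that after reducing to the codimension-$2$ sublattice transverse to $\mathfrak{j}$ and restricting to the graded piece of degree $k+1$, the defect $g_{\mathfrak{j}}$ lies in the direct sum of the rank-one pieces $\mathfrak{g}_{k_0 n_0}$ for primitive $n_0\perp\mathfrak{j}$, so that it can be split into contributions aligned with half-hyperplanes. This reduces, via the $N^+$-grading and the skew-symmetry of $\mathfrak{g}$, to a two-dimensional statement about commuting factorizations of automorphisms of $\widehat{\mathbb{C}[N]}$ supported on opposite half-spaces, which is the classical input from \cite{GHKK}. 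Everything else — compatibility of inverse limits, the definition of support and joints, and the existence of small generic loops — is essentially book-keeping around this central lemma.
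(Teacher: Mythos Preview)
The paper does not supply its own proof of this statement; it is quoted verbatim as Theorems~1.12 and~1.21 of \cite{GHKK} and used as a black box. Your outline is essentially the standard Kontsevich--Soibelman / \cite{GHKK} order-by-order construction and uniqueness argument, so it matches the source the paper cites.
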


Define the following set of initial walls associated to the seed ${\bf s}=(e_{i})_{i\in I}$:
\[
\mathfrak{D}_{{\bf s},{\rm in}}=\left\{ \left(e_{i}^{\perp},\exp\left(-{\rm Li}_{2}(-z^{e_{i}})\right)\right)\mid i\in I\right\} 
\]
where ${\rm Li}_{2}$ is the dilogarithm function ${\rm Li}_{2}(x)=\sum_{k\geq1}\frac{x^{k}}{k^{2}}$.
By Theorem \ref{thm: uniqueness-incoming walls}, there is a consistent
scattering diagram $\mathfrak{D}_{{\bf s}}$ , unique up to equivalence,
whose set of initial walls is $\mathfrak{D}_{{\rm {\bf s},{\rm in}}}$.
It is the \emph{cluster scattering diagram} associated to the seed
${\bf s}$. Slightly different from \cite{GHKK18}, we consider a
single scattering diagram with wall-crossing automorphisms in the
Lie group. By letting the Lie group acting on different completed
rings, we get wall-crossings for both $\mathcal{X}$ and $\mathcal{A}_{{\rm prin}}$
spaces.

Define 
\[
\mathcal{C}_{{\bf s}}^{+}=\{m\in M_{\mathbb{R}}\mid m\mid_{N^{+}}\geq0\},\quad\mathcal{C}_{{\bf s}}^{-}=\{m\in M_{\mathbb{R}}\mid m\mid_{N^{+}}\leq0\}.
\]
By Theorem 2.13 of \cite{GHKK18}, $\mathcal{C}_{{\bf s}}^{+}$ and
$\mathcal{C}_{{\bf s}}^{-}$ are two distinguished chambers in $\mathfrak{D}_{{\bf s}}$.
Moreover, $\mathcal{C}_{{\bf s}}^{+}$ (resp. $\mathcal{C}_{{\bf s}}^{-}$)
is a maximal cone of a simplicial fan in $\mathfrak{D}_{{\bf s}}$
which we denote by $\Delta_{{\bf s}}^{+}$ (resp. $\Delta_{{\bf s}}^{-}$).
We call $\Delta_{{\bf s}}^{+}$ the\emph{ cluster complex} of $\mathfrak{D}_{{\bf s}}$.
For more details on the structure of the cluster complex and its description
via tropicalization, see section 2 of \cite{GHKK18}. 

\subsection{\label{subsec:Summary-folding}Summary of the folding procedure.}

Suppose $\Pi$ is a subgroup of the symmetric group $S_{n}$ acting
on $I$ satisfies the following condition:

For any indices $i,j\in I$, for any $\pi_{1},\pi_{2}$ in $\Pi$,
\begin{equation}
\{e_{i},e_{j}\}=\{e_{\pi_{1}\cdot i},e_{\pi_{2}\cdot j}\}.\label{eq: symmetry of seed}
\end{equation}
Then we can apply the \emph{folding} procedure to construct a new
seed $\overline{{\bf s}}$. For any $i\in I$, denote by $\Pi i$
the orbit of $i$ under the action of $\Pi$. Let $\overline{I}$
be the set indexing the orbits of $I$ under the action of $\Pi$.
Let $\overline{N}$ be the lattice with basis $\{e_{\Pi i}\}_{\Pi i\in\overline{I}}$.
Define a $\mathbb{Z}$-valued skew-symmetric bilinear form $\{,\}$
on $\overline{N}$ such that $\{e_{\Pi i},e_{\Pi j}\}=\{e_{i},e_{j}\}$.
By condition \ref{eq: symmetry of seed}, the skew-symmetric form
on $\overline{N}$ is well-defined. There is a natural quotient homomorphism
of lattices $q:N\rightarrow\overline{N}$ that sends $e_{i}$ to $e_{\Pi i}$.
Now we have the following fixed data:
\begin{itemize}
\item The lattice $\overline{N}$ with the $\mathbb{Z}$-valued skew-symmetric
bilinear form $\{,\}$. 
\item The index set $\overline{I}$ parametrizing the set of orbits of $I$
under the action of $\Pi$.
\item For each $\Pi i\in\overline{I}$, a positive integer $d_{\Pi i}=|q^{-1}\{e_{\Pi i}\}|$. 
\end{itemize}
We define the following new seed: 
\[
\bar{{\bf s}}=(e_{\Pi i}\mid\Pi i\in\bar{I}).
\]
Let $\overline{N}^{\circ}\subset\overline{N}$ be the sublattice such
that $\{d_{\Pi i}e_{\Pi i}\mid i\in\overline{I}\}$ is a basis. Unlike
\cite{GHKK18}, we do not impose the condition that the greatest common
divisor of $\{d_{\Pi i}\}_{\Pi i\in\overline{I}}$ need to be $1$.
Notice that the new seed $\overline{{\bf s}}$ may no longer be simply
laced since $d_{\Pi i}$ may not be $1$. The seed $\overline{{\bf s}}$
defines the exchange matrix $\overline{\epsilon}$ such that $\overline{\epsilon}_{\Pi i\,\Pi j}=\{e_{\Pi i},e_{\Pi j}\}d_{\Pi j}.$
Take 
\[
\overline{N}^{+}=\{\sum_{\Pi i\in\bar{I}}a_{\Pi i}e_{\Pi i}\mid a_{\Pi i}\geq0,\sum a_{\Pi i}>0\}.
\]
Let $\mathfrak{\bar{g}}\subset\mathbb{C}[\overline{N}]$ be the $\mathbb{C}$-vector
space with basis $z^{\bar{n}},\bar{n}\in\overline{N}^{+}$. Define
a Lie bracket on $\bar{\mathfrak{g}}$ as follows:
\[
[z^{e_{\Pi i}},z^{e_{\Pi j}}]=-\{e_{\Pi i},e_{\Pi j}\}z^{e_{\Pi i}+e_{\Pi j}}.
\]
Define $\tilde{q}:\mathfrak{g}\rightarrow\overline{\mathfrak{g}}$
by $z^{n}\mapsto z^{\overline{n}}$ where $\bar{n}$ is the image
of $n$ under the surjective quotient homomorphism $q:N\rightarrow\overline{N}$.
Since
\begin{align*}
[\tilde{q}\left(z^{n}\right),\tilde{q}(z^{n^{'}})] & =-\{\overline{n},\overline{n^{'}}\}z^{\overline{n+n^{'}}}=-\{n,n^{'}\}z^{\overline{n+n^{'}}}=\tilde{q}\left([z^{n},z^{n^{'}}]\right),
\end{align*}
$\tilde{q}$ is a surjective graded Lie algebra homomorphism. 

As in the case of $\mathfrak{g}$, we can complete $\bar{\mathfrak{g}}$
with respect to its grading. Let $\overline{G}$ be the corresponding
pro-nilpotent group after the completion. Then $\tilde{q}$ induces
a graded Lie group homomorphism from $G$ to $\overline{G}$. 

By Theorem \ref{thm: uniqueness-incoming walls}, the seed $\overline{{\bf s}}$
gives rises to a consistent scattering diagram $\mathfrak{D}_{\overline{{\bf s}}}$
whose set of incoming walls are defined as follows: 
\[
\mathfrak{D}_{\overline{{\bf s}},{\rm in}}=\left\{ \left(e_{\Pi i}^{\perp},\exp\left(-d_{\Pi i}{\rm Li}_{2}(-z^{e_{\Pi i}})\right)\right)\mid\Pi i\in I\right\} .
\]
Denote by $\overline{M}$ the dual basis of $\overline{N}$ and $\overline{N}^{\circ}$
the sublattice of $\overline{N}$ with basis $\{d_{\Pi i}e_{\Pi i}\}_{\Pi i\in\overline{I}}$.
Let $\overline{M}^{\circ}$ be the dual lattice of $\overline{N}^{\circ}$.
Let 
\[
q^{*}:\overline{M}_{\mathbb{R}}\hookrightarrow M_{\mathbb{R}}
\]
the embedding induced by the quotient homomorphism $q:N\rightarrow\overline{N}$.
One of the main technical result in \cite{YZ} is that the quotient
construction of folding extends to the level of scattering diagrams:
\begin{thm}
(cf. Construction 2.19, Theorem 2.20, and Corollary 2.21 of \cite{YZ})
\label{thm:folding-scattering diag}The scattering diagram $\mathfrak{D}_{\overline{{\bf s}}}$
can be obtained from $\mathfrak{D}_{{\bf s}}$ via a quotient construction. 
\end{thm}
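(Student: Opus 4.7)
The plan is to construct an explicit quotient scattering diagram $\overline{\mathfrak{D}_{{\bf s}}}$ in $\overline{M}_{\mathbb{R}}$ from $\mathfrak{D}_{{\bf s}}$ and then identify it up to equivalence with $\mathfrak{D}_{\overline{{\bf s}}}$ via the uniqueness characterization of Theorem \ref{thm: uniqueness-incoming walls}. The two main ingredients already constructed above are the surjective graded Lie group homomorphism $\tilde{q}:G\to\overline{G}$ and the dual inclusion $q^{*}:\overline{M}_{\mathbb{R}}\hookrightarrow M_{\mathbb{R}}$. First I would exploit the $\Pi$-symmetry of the initial data: the hypothesis \ref{eq: symmetry of seed} makes $\Pi$ act on $N$ preserving $\{,\}$ and permuting the basis $\{e_{i}\}$, hence on $M_{\mathbb{R}}$ and on $G$, and it permutes the initial incoming set $\mathfrak{D}_{{\bf s},{\rm in}}$. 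By uniqueness of the consistent completion, there is a strictly $\Pi$-invariant representative of $\mathfrak{D}_{{\bf s}}$, and it is this representative I would push forward.

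Next, I would define $\overline{\mathfrak{D}_{{\bf s}}}$ wall by wall. For each wall $(\mathfrak{d},g_{\mathfrak{d}})$ of $\mathfrak{D}_{{\bf s}}$ with $\mathfrak{d}\subset n_{0}^{\perp}$ and $n_{0}\in N^{+}$ primitive, set $\overline{n}_{0}=q(n_{0})$. When $\overline{n}_{0}=0$ the homomorphism $\tilde{q}$ annihilates $g_{\mathfrak{d}}\in G_{n_{0}}^{\parallel}$ and the wall contributes nothing; when $\overline{n}_{0}\neq 0$, I produce a wall $\bigl((q^{*})^{-1}(\mathfrak{d}),\,\tilde{q}(g_{\mathfrak{d}})\bigr)$ supported in $\overline{n}_{0}^{\perp}\subset\overline{M}_{\mathbb{R}}$. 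Two walls in the same $\Pi$-orbit project to the same underlying cone, and since $\tilde{q}(g_{\mathfrak{d}})$ lies in the abelian subgroup $\overline{G}_{\overline{n}_{0}}^{\parallel}$ I may multiply such contributions unambiguously. The finiteness condition at each truncation order $k$ descends from that of $\mathfrak{D}_{{\bf s}}$, since every nontrivial wall of $\overline{\mathfrak{D}_{{\bf s}}}$ at order $k$ arises as the pushforward of one of the finitely many nontrivial source walls at that order.

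Finally, I would verify consistency and match the incoming data. For a path $\overline{\gamma}$ in $\overline{M}_{\mathbb{R}}$ generic for $\overline{\mathfrak{D}_{{\bf s}}}$, one lifts to the generic path $\gamma=q^{*}\overline{\gamma}$ in $M_{\mathbb{R}}$; the path-ordered product $\mathfrak{p}_{\overline{\gamma},\overline{\mathfrak{D}_{{\bf s}}}}$ equals $\tilde{q}(\mathfrak{p}_{\gamma,\mathfrak{D}_{{\bf s}}})$ by construction, so consistency of $\mathfrak{D}_{{\bf s}}$ forces consistency of $\overline{\mathfrak{D}_{{\bf s}}}$. For the incoming data, each orbit $\Pi i$ of size $d_{\Pi i}$ supplies walls $(e_{j}^{\perp},\exp(-{\rm Li}_{2}(-z^{e_{j}})))$, $j\in\Pi i$, all of whose images lie on $e_{\Pi i}^{\perp}$; multiplying in the abelian group $\overline{G}_{e_{\Pi i}}^{\parallel}$ yields $\exp(-d_{\Pi i}{\rm Li}_{2}(-z^{e_{\Pi i}}))$, matching $\mathfrak{D}_{\overline{{\bf s}},{\rm in}}$ exactly. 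Theorem \ref{thm: uniqueness-incoming walls} then identifies $\overline{\mathfrak{D}_{{\bf s}}}$ with $\mathfrak{D}_{\overline{{\bf s}}}$ up to equivalence. I expect the main obstacle to be the bookkeeping for non-initial, non-incoming walls: many distinct $\Pi$-orbits can pile up on the same hyperplane in $\overline{M}_{\mathbb{R}}$, and one must verify that combining their pushforwards reproduces exactly the outgoing walls forced by consistency for $\mathfrak{D}_{\overline{{\bf s}}}$, rather than some merely equivalent variant that could a priori fail to match the expected form.
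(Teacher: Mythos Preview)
The paper does not supply a proof of this statement; it is quoted as a result from \cite{YZ} (Construction 2.19, Theorem 2.20, Corollary 2.21 there) and no argument is given in the present text. So there is no in-paper proof to compare against, and your sketch cannot be matched line by line with anything here.

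That said, your outline is essentially the right strategy and is the one carried out in \cite{YZ}: push walls forward along $\tilde q$, restrict supports along $q^{*}$, and invoke Theorem~\ref{thm: uniqueness-incoming walls}. Two points deserve more care than you give them. First, the lift $\gamma=q^{*}\overline{\gamma}$ of a generic path lands in the proper subspace $q^{*}(\overline{M}_{\mathbb{R}})\subset M_{\mathbb{R}}$, which is itself a union of wall intersections and joints of $\mathfrak{D}_{{\bf s}}$; it is therefore \emph{never} generic for $\mathfrak{D}_{{\bf s}}$ in the sense of Definition~\ref{def:Two-scattering-diagrams}. One needs a perturbation argument together with the $\Pi$-invariance of the representative to make sense of $\tilde q(\mathfrak{p}_{\gamma,\mathfrak{D}_{{\bf s}}})$ and to show it agrees with $\mathfrak{p}_{\overline{\gamma},\overline{\mathfrak{D}_{{\bf s}}}}$; this is precisely the content of the results in \cite{YZ} that the paper cites. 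Second, your remark that $\overline{n}_{0}=q(n_{0})$ can vanish is vacuous: since $n_{0}\in N^{+}$ has nonnegative coordinates not all zero and $q$ sends each $e_{i}$ to a basis vector $e_{\Pi i}$, one always has $q(n_{0})\in\overline{N}^{+}$. The genuine bookkeeping issue you flag at the end---that outgoing walls of $\mathfrak{D}_{{\bf s}}$ push forward to outgoing walls, so that condition (3) of Theorem~\ref{thm: uniqueness-incoming walls} is met---is real and must be checked, but it follows once one observes that $q^{*}(\{e_{\Pi i},\cdot\})=\{e_{i},\cdot\}$ under the folding hypothesis~\eqref{eq: symmetry of seed}.
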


For more details of the folding construction in the context of scattering
diagrams and cluster theory and its application to the Fock-Goncharov
canonical basis conjecture, see section 2 and 3 of \cite{YZ}.

For the rest of this subsection, we prove some lemmas that will be
used later. Let $\mathfrak{T}_{{\bf s}}$ (resp. $\mathfrak{T}_{\overline{{\bf s}}}$)
the oriented tree parametrizing all seeds mutationally equivalent
to ${\bf s}$ (resp. $\overline{{\bf s}}$). Given $k$ in $I$, let
$l_{1},\cdots,l_{|\Pi k|}$ be an enumeration of elements in $|\Pi k|$.
\begin{lem}
\label{lem:Pi-constrained mutation}Let ${\bf s}^{'}=(e_{i}^{'})_{i\in I}=\mu_{l_{1}}\circ\cdots\circ\mu_{l_{|\Pi k|}}({\bf s})$.
Then ${\bf s}^{'}$ does not depend on the choice of ordering of elements
in $\Pi k$. With respect to the new seed ${\bf s}^{'}$, the action
of $\Pi$ still satisfies the condition for folding, that is, for
any $\pi_{1},\pi_{2}$ in $\Pi$, 
\[
\{e_{\pi_{1}\cdot i}^{'},e_{\pi_{2}\cdot j}^{'}\}=\{e_{i}^{'},e_{j}^{'}\}.
\]
Moreover, $\mu_{\Pi k}(\overline{{\bf s}})=(q(e_{i}^{'}))_{\Pi i\in\overline{I}}$
where $q:N\twoheadrightarrow\overline{N}$ is the quotient map of
lattice sending $e_{i}$ to $e_{\Pi i}$. 
\end{lem}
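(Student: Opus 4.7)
The plan is to first extract the single algebraic fact that makes everything work: the condition \ref{eq: symmetry of seed} forces $\{e_l, e_{l'}\} = 0$ for any $l, l' \in \Pi k$. Indeed, picking $\pi_1 = \mathrm{id}$ and $\pi_2 \in \Pi$ with $\pi_2 l = l'$, we get $\{e_l, e_{l'}\} = \{e_l, e_l\} = 0$ by skew-symmetry. Once we have orbit-wise orthogonality, a standard property of mutations in the skew-symmetric setting (mutations at mutually $\{,\}$-orthogonal indices commute, as their mutation formulas have vanishing cross-terms) immediately gives that $\mu_{l_1} \circ \cdots \circ \mu_{l_{|\Pi k|}}$ is independent of the enumeration. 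This disposes of the first assertion.

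For the $\Pi$-symmetry of ${\bf s}'$, the explicit formula for the composite mutation reads
\[
e_i' = \begin{cases} -e_i & i \in \Pi k, \\ e_i + \sum_{l \in \Pi k}[\{e_i, e_l\}]_+\, e_l & i \notin \Pi k,\end{cases}
\]
obtained by iterating the simply-laced mutation rule and using that the terms involving $\{e_l, e_{l'}\}$ drop out. Since $\Pi k$ is $\Pi$-stable, I would then verify that $\pi \cdot e_i' := e_{\pi \cdot i}'$ is well-defined. By the $\Pi$-invariance of $\Pi k$, relabeling $l \mapsto \pi_1 l$ inside the sum and using the hypothesis $\{e_{\pi_1 i}, e_{\pi_1 l}\} = \{e_i, e_l\}$, one reads off that $e_{\pi_1 i}' = \pi_1 \cdot e_i'$ when $\pi_1$ is extended linearly. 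The new symmetry condition $\{e_{\pi_1 i}', e_{\pi_2 j}'\} = \{e_i', e_j'\}$ then follows by substituting the explicit formula, expanding, and invoking \ref{eq: symmetry of seed} for the original seed term-by-term; the cross-terms again vanish by the orthogonality of $\Pi k$.

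For the last assertion, I would compare coordinate by coordinate. If $i \in \Pi k$, then $q(e_i') = -e_{\Pi k}$, which matches $\mu_{\Pi k}(e_{\Pi k}) = -e_{\Pi k}$. If $i \notin \Pi k$, applying $q$ to the formula above gives
\[
q(e_i') = e_{\Pi i} + \Bigl(\sum_{l \in \Pi k}[\{e_i, e_l\}]_+\Bigr)\, e_{\Pi k}.
\]
By \ref{eq: symmetry of seed} applied with $\pi_1 = \mathrm{id}$ and varying $\pi_2$, all $|\Pi k|$ summands are equal to $[\{e_i, e_k\}]_+ = [\{e_{\Pi i}, e_{\Pi k}\}]_+$; combined with the identification $d_{\Pi k} = |q^{-1}\{e_{\Pi k}\}| = |\Pi k|$ and the positivity $d_{\Pi k} > 0$, the coefficient becomes $[d_{\Pi k}\{e_{\Pi i}, e_{\Pi k}\}]_+ = [\overline{\epsilon}_{\Pi i,\Pi k}]_+$, which is precisely the coefficient appearing in $\mu_{\Pi k}(\overline{\bf s})$.

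The main (and essentially only) obstacle is bookkeeping: keeping the conventions for the non-simply-laced mutation rule on $\overline{\bf s}$ consistent with the multiplier $d_{\Pi k}$, and making sure the $\Pi$-equivariance of the composite mutation is argued cleanly rather than by brute index chasing. The cleanest way is to package Step 2 by observing that $\mu_{\Pi k}$ is a $\Pi$-equivariant automorphism of $N$ because $\Pi k$ is $\Pi$-stable and individual mutations transform $\Pi$-equivariantly under $\pi \in \Pi$ sending $\mu_l$ to $\mu_{\pi \cdot l}$; the preservation of $\{,\}$ by $\mu_{\Pi k}$ (up to the usual sign on mutated coordinates) then transfers the symmetry condition from ${\bf s}$ to ${\bf s}'$ without further computation.
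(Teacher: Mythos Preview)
Your proposal is correct and follows essentially the same approach as the paper: both arguments write down the explicit formula for $e_i'$ after the composite mutation (using $\{e_l,e_{l'}\}=0$ for $l,l'\in\Pi k$ to justify the form), verify the $\Pi$-symmetry by direct substitution and term-by-term comparison, and then check the compatibility with $q$ coordinate by coordinate. Your treatment of the order-independence is slightly more explicit than the paper's (which leaves it implicit in the formula $e_{l_a}'=-e_{l_a}$), and your final $\Pi$-equivariance repackaging is a nice conceptual gloss, but the substance is identical.
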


\begin{proof}
First observe that for any $1\leq a\leq|\Pi k|$, $e_{l_{a}}^{'}=-e_{l_{a}}$.
Therefore for any $1\leq a,b\leq|\Pi k|$, $\{e_{l_{a}}^{'},e_{l_{b}}^{'}\}=0$.
If $i,j\in I$ are not in $\Pi k$, then for any $\pi_{1},\pi_{2}$
in $\Pi$,
\[
\begin{split}e_{\pi_{\epsilon}\cdot i}^{'} & =e_{\pi_{1}\cdot i}+\sum_{a=1}^{|\Pi k|}[\{e_{\pi_{\epsilon}\cdot i},e_{l_{a}}\}]_{+}e_{l_{a}}=e_{\pi_{\epsilon}\cdot i}+[\{e_{i},e_{k}\}]_{+}\sum_{a=1}^{|\Pi k|}e_{l_{a}},\quad\epsilon=1,2.\end{split}
\]
Therefore, 
\begin{align*}
\{e_{\pi_{1}\cdot i}^{'},e_{\pi_{2}\cdot j}^{'}\} & =\{e_{\pi_{1}\cdot i}+[\{e_{i},e_{k}\}]_{+}\sum_{a=1}^{|\Pi k|}e_{l_{a}},\,e_{\pi_{2}\cdot j}+[\{e_{j},e_{k}\}]_{+}\sum_{a=1}^{|\Pi k|}e_{l_{a}}\}=\{e_{i}^{'},e_{j}^{'}\}.
\end{align*}
If $i\in I$ is not in $\Pi k$, then for any $1\leq a\leq|\Pi k|$,
\begin{align*}
\{e_{\pi_{1}\cdot i}^{'},e_{l_{a}}^{'}\} & =\{e_{\pi_{1}\cdot i}+[\{e_{i},e_{k}\}]_{+}\sum_{a=1}^{|\Pi k|}e_{l_{a}},-e_{l_{a}}\}=\{e_{i},-e_{k}\}=\{e_{i}^{'},e_{k}^{'}\}.
\end{align*}
Hence the action of $\Pi$ on $I$ with respect to the new seed ${\bf s}^{'}$
still satisfies the folding condition. It remains to prove the second
part of the lemma. Notice that for any $1\leq a\leq|\Pi k|$, $q(e_{l_{a}}^{'})=-e_{\Pi k}=\mu_{\Pi k}(e_{\Pi k})$.
If $i\in I$ is not in $\Pi k$, then for any $\pi$ in $\Pi$, 
\begin{align*}
q(e_{\pi\cdot i}^{'}) & =q\left[e_{\pi\cdot i}+[\{e_{i},e_{k}\}]_{+}\sum_{a=1}^{|\Pi k|}e_{l_{a}}\right]=\mu_{\Pi k}(e_{\Pi i}^{'}).
\end{align*}
\end{proof}
\begin{defn}
Denote by $[{\bf s}]$ the set of seeds mutationally equivalent to
${\bf s}$. Given $k$ in $I$, by Lemma \ref{lem:Pi-constrained mutation},
there is a well-defined composition of mutations 
\begin{equation}
\mu_{\Pi k}=\prod_{k^{'}\in\Pi k}\mu_{k^{'}}.\label{eq:-1}
\end{equation}
We call such compositions of mutations $\mu_{\Pi k}$ a \emph{$\Pi$-constrained
mutation}. Moreover, also by Lemma \ref{lem:Pi-constrained mutation},
the new seed $\mu_{\Pi k}({\bf s})$ is an element in $[{\bf s}]$
and $\Pi$-constrained mutation is also well-defined for $\mu_{\Pi k}({\bf s})$.
We denote by $[{\bf s}]^{\Pi}\subset[{\bf s}]$ the set of seeds related
to ${\bf s}$ by $\Pi$-constrained mutations.

Given ${\bf s}_{v}\in[{\bf s}]$, let $G^{v}$ be the $\mathbf{g}$-matrix
of the seed ${\bf s}_{v}$. We say $G^{v}$ is \emph{$\Pi$-invariant}
if for any $\pi$ in $\Pi$, for any $k\in I$, $\pi\cdot G_{k}^{v}=G_{\pi\cdot k}^{v}.$
We say the chamber $\mathcal{C}_{v}^{+}$ in $\Delta_{{\bf s}}^{+}$
is \emph{$\Pi$-invariant }if $G^{v}$ is. In particular, for any
$\pi$ in $\Pi$, $\pi(\mathcal{C}_{v}^{+})=\mathcal{C}_{v}^{+}$
under the action of $\pi$ on $\mathfrak{D}_{{\bf s}}$ (cf. section
2.2 of \cite{YZ}).
\end{defn}

\begin{lem}
\label{lem:Pi-invariant chambers}Let ${\bf s}^{'}=\mathbf{s}_{w}$
be a seed in $[{\bf s}]^{{\rm iso}}$. Suppose $\mathcal{C}_{w}^{+}\in\Delta_{{\bf s}}^{+}$
is $\Pi$-invariant. Then given any $v^{'}$ in $\mathfrak{T}_{{\bf s}^{'}}\subset\mathfrak{T}_{{\bf s}}$
such that ${\bf s}_{v^{'}}$ is contained in $[{\bf s}^{'}]^{\Pi}$,
$\mathcal{C}_{v^{'}}^{+}\in\Delta_{{\bf s}}^{+}$ is $\Pi$-invariant.
In particular, $\mathcal{C}_{v^{'}}^{+}\bigcap q^{*}(\overline{M}_{\mathbb{R}})$
is of full dimension in $q^{*}\left(\overline{M}_{\mathbb{R}}\right)$.
\end{lem}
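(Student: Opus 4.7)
The plan is to proceed by induction on the number of $\Pi$-constrained mutations in a sequence from $w$ to $v^{'}$; such a sequence exists by the hypothesis that ${\bf s}_{v^{'}} \in [{\bf s}^{'}]^{\Pi}$. The base case $v^{'} = w$ is immediate from the hypothesis that $\mathcal{C}_w^{+}$ is $\Pi$-invariant. For the inductive step, suppose $\mathcal{C}_{v^{'}}^{+}$ is $\Pi$-invariant and $v^{''} = \mu_{\Pi k}(v^{'})$ is obtained from $v^{'}$ by a single $\Pi$-constrained mutation in the direction of an orbit $\Pi k$; we must show $\mathcal{C}_{v^{''}}^{+}$ is again $\Pi$-invariant.

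The key observation is that the $\Pi$-action on $\mathfrak{D}_{{\bf s}}$ constructed in section 2.2 of \cite{YZ} is an automorphism of the scattering diagram which acts compatibly on the edge-labels of $\mathfrak{T}_{{\bf s}}$: an edge labelled by $k^{'} \in I$ is carried to an edge labelled by $\pi \cdot k^{'}$. Since we mutate along an entire $\Pi$-orbit $\Pi k$ simultaneously, and the individual mutations within $\Pi k$ commute by Lemma \ref{lem:Pi-constrained mutation}, the composite $\mu_{\Pi k}$ is $\Pi$-equivariant. Hence $\pi$ sends $\mathcal{C}_{v^{''}}^{+}$ to the chamber reached from $\pi(\mathcal{C}_{v^{'}}^{+}) = \mathcal{C}_{v^{'}}^{+}$ by the same mutation $\mu_{\Pi \cdot (\pi k)} = \mu_{\Pi k}$, which is $\mathcal{C}_{v^{''}}^{+}$ itself. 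Equivalently, at the level of $\mathbf{g}$-vectors, the relation $\pi \cdot G_i^{v^{'}} = G_{\pi \cdot i}^{v^{'}}$ is preserved by simultaneous mutation along $\Pi k$, because the $\mathbf{g}$-vector mutation rule is governed by the exchange data of ${\bf s}_{v^{'}}$, which is $\Pi$-symmetric by Lemma \ref{lem:Pi-constrained mutation}. The main subtlety, and the reason the hypothesis restricts to $\Pi$-constrained mutations, is that a single mutation at $k^{'} \in \Pi k$ is \emph{not} $\Pi$-equivariant on its own: it must be bundled with the mutations at the remaining members of the orbit for the argument to go through.

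For the ``in particular'' statement, note that $q^{*}(\overline{M}_{\mathbb{R}})$ is precisely the $\Pi$-fixed subspace of $M_{\mathbb{R}}$, and consider the averaging projection $p : M_{\mathbb{R}} \to q^{*}(\overline{M}_{\mathbb{R}})$ given by $p(m) = \frac{1}{|\Pi|}\sum_{\pi \in \Pi} \pi(m)$. Since $\mathcal{C}_{v^{'}}^{+}$ is convex and $\Pi$-invariant, any averaged point $p(m)$ with $m \in \mathcal{C}_{v^{'}}^{+}$ remains in $\mathcal{C}_{v^{'}}^{+}$, so $p(\mathcal{C}_{v^{'}}^{+}) = \mathcal{C}_{v^{'}}^{+} \cap q^{*}(\overline{M}_{\mathbb{R}})$. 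As $p$ is a linear surjection onto $q^{*}(\overline{M}_{\mathbb{R}})$ and $\mathcal{C}_{v^{'}}^{+}$ is a full-dimensional chamber (hence has nonempty interior in $M_{\mathbb{R}}$), the image $p(\mathcal{C}_{v^{'}}^{+})$ has nonempty interior in $q^{*}(\overline{M}_{\mathbb{R}})$, yielding the claimed full-dimensionality.
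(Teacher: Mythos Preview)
Your proof is correct and takes a somewhat different route from the paper's. The paper proceeds by direct computation with the $\mathbf{g}$-vector mutation formula: it invokes sign coherence of $\mathbf{c}$-vectors to ensure that all $C_{k'}^{w}$ for $k' \in \Pi k$ carry the same sign, then explicitly verifies $\pi \cdot G_{k}^{v'} = G_{\pi \cdot k}^{v'}$ term by term in each of the two sign cases. Your argument instead works at the level of the $\Pi$-action on $\mathfrak{D}_{{\bf s}}$ from \cite{YZ}, deducing equivariance of $\mu_{\Pi k}$ from the fact that $\Pi$ permutes the mutation directions within the orbit; this is cleaner but relies on that external input. One small caution: your sentence ``the $\mathbf{g}$-vector mutation rule is governed by the exchange data of ${\bf s}_{v'}$'' is not quite enough on its own, since the rule also depends on the tropical sign of the relevant $\mathbf{c}$-vector---this is exactly the place where the paper invokes sign coherence. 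Your scattering-diagram argument already absorbs this issue (the $\Pi$-action automatically permutes $\mathbf{c}$-vectors along with everything else), so the overall logic stands, but the ``equivalently'' clause would benefit from either citing sign coherence or simply being dropped in favour of the first argument.

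Your averaging argument for the ``in particular'' clause is a genuine addition: the paper does not spell this out, and your use of convexity plus the identification of $q^{*}(\overline{M}_{\mathbb{R}})$ with the $\Pi$-fixed subspace makes the full-dimensionality transparent.
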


\begin{proof}
It suffices to prove for the case where ${\bf s}_{v^{'}}=\mu_{\Pi k}\left({\bf s}_{w}\right)$.
Indeed, since ${\bf s}_{w}$ is isomorphic to ${\bf s}$, up to permutation
of indices, we could assume that ${\bf s}_{w}$ and $\Pi$ satisfy
the folding condition \ref{eq: symmetry of seed}. Let $G^{w}$ ,
$C^{w}$ be the ${\bf g}$-matrix and $\mathbf{c}$-matrix associated
to $\mathcal{C}_{w}^{+}$ respectively. By sign coherence of $\mathbf{c}$-vectors,
all entries of $C_{k}^{w}$ either $\geq0$ or $\leq0$.

Case 1: All entries of $C_{k}^{w}\geq0$. By the $\Pi$-invariance
of $\mathcal{C}_{w}^{+}$ , the same holds for any $k^{'}\in\Pi k$.
By the mutation formula for $\mathbf{g}$-vectors, for any $k^{'}\in\Pi k$,
\[
G_{k^{'}}^{v^{'}}=-G_{k^{'}}^{w}+\sum_{i=1}^{n}\left[-\epsilon_{ik^{'}}^{w}\right]_{+}G_{i}^{w}.
\]
Let $k^{'}=\pi\cdot k$. Then it follows from the $\Pi$-invariance
of $G^{w}$ that
\begin{align*}
\pi\cdot G_{k}^{v^{'}} & =-\pi\cdot G_{k}^{w}+\sum_{i=1}^{n}\left[-\epsilon_{ik}^{w}\right]_{+}\pi\cdot G_{i}^{w}=-G_{\pi\cdot k}^{w}+\sum_{i=1}^{n}\left[-\epsilon_{ik^{'}}^{w}\right]_{+}G_{\pi\cdot i}^{w}\\
 & =-G_{k^{'}}^{w}+\sum_{i=1}^{n}\left[-\epsilon_{ik^{'}}^{w}\right]_{+}G_{\pi\cdot i}^{w}.
\end{align*}
Since for any $i\in I$, 
\[
\epsilon_{ik^{'}}=\{e_{i},e_{k^{'}}\}=\{e_{\pi\cdot i},e_{k^{'}}\}=\epsilon_{(\pi\cdot i)k^{'}},
\]
we conclude that
\begin{align*}
\pi\cdot G_{k}^{v^{'}} & =-G_{k^{'}}^{w}+\sum_{i=1}^{n}\left[-\epsilon_{ik^{'}}^{w}\right]_{+}G_{\pi\cdot i}^{w}=-G_{k^{'}}^{w}+\sum_{i=1}^{n}\left[-\epsilon_{(\pi\cdot i)k^{'}}^{w}\right]_{+}G_{\pi\cdot i}^{w}\\
 & =-G_{k^{'}}^{w}+\sum_{i=1}^{n}\left[-\epsilon_{ik^{'}}^{w}\right]G_{i}^{w}=G_{k^{'}}^{v}.
\end{align*}
Therefore $G^{v^{'}}$ and hence $\mathcal{C}_{v^{'}}^{+}$ is $\Pi$-invariant.

Case 2: All entries of $C_{k}^{w}\leq0$. Then we have 
\[
G_{k^{'}}^{v^{'}}=-G_{k^{'}}^{w}+\sum_{i=1}^{n}\left[\epsilon_{ik^{'}}^{w}\right]_{+}G_{i}^{w}.
\]
Similar to Case 1, we again conclude that $\mathcal{C}_{v^{'}}^{+}$
is $\Pi$-invariant.
\end{proof}

\subsection{\label{subsec:Folding-and-degenerate}Folding and degenerate loci
of families of log Calabi-Yau surfaces.}

Let $(Y,D)$ be a generic Looijenga pair together with a toric model
$\rho:(Y,D)\rightarrow(\overline{Y},\overline{D})$. Then as described
in subsection \ref{subsec:Relationship-to-cluster}, $(Y,D)$ together
with the toric model gives rise to seed data $\left(N,\{\cdot,\cdot\},{\bf s}\right)$.
Notice that folding construction naturally applies in this situation.
Given $i,j\in I$, if $\{e_{i},\}=\{e_{j},\}$, then $E_{i}$ and
$E_{j}$ meet the same boundary component and we can naturally fold
$e_{i}$ and $e_{j}$ together. More generally, let $\Pi$ be a subgroup
of the symmetric group $S_{n}$ generated by involutions such that
for any $\pi$ in $\Pi$, 
\[
\{e_{i},\cdot\}=\{e_{\pi\cdot i},\cdot\}.
\]
Then the action of $\Pi$ satisfies the folding condition and gives
us a folded seed $\overline{{\bf s}}$. Moreover, we can identify
$\Pi$ as a subgroup of the Weyl group $(Y,D)$. Indeed, given $i\in I$
and $j\in\Pi i\setminus\{i\}$, under the identification $D^{\perp}\simeq K$
given in Theorem 5.5 in\cite{GHK15}, $e_{i}-e_{j}$ corresponds the
root $E_{i}-E_{j}$ . Let $\pi$ be the involution $(ij)$ in $\Pi$.
Then the induced action of $\pi$ on $K\simeq D^{\perp}$ agrees with
the reflection $r_{E_{i}-E_{j}}$ with respect to the root $E_{i}-E_{j}$
in $W(Y,D)$.

Let ${\bf s}$ be a seed obtained from a generic Looijenga pair $(Y,D)$
with toric model $\rho:(Y,D)\rightarrow(\overline{Y},\overline{D})$
and $\overline{{\bf s}}$ the seed folded from ${\bf s}$ via the
action of a group ${\bf \Pi}$ on the index set $I$. Notice that
folding procedure does not change the rank of the exchange matrix,
so the $\mathcal{X}$-cluster variety associated to $\overline{{\bf s}}$
is still a rank $2$ cluster variety. Moreover, $\mathcal{X}_{\overline{{\bf s}}}$
is a locally closed cluster subvariety of $\mathcal{X}_{{\bf s}}$
and corresponds to a degenerate subfamily of the universal family
of $U=Y\setminus D$. To see this, let $\overline{K}$ be the kernel
of the skew-symmetric form on $\overline{N}$. Let $\overline{\mathcal{Y}}_{\overline{{\bf s}}}$
be the blowup of the toric variety given by $\overline{{\bf s}}$.
Given a general $\overline{\phi}$ in $T_{\overline{K}^{*}}$, $\mathcal{X}_{\overline{{\bf s}},\phi}$
up to codimension 2 agrees with $\overline{\mathcal{Y}}_{\overline{{\bf s}},\overline{\phi}}$,
which is blowup of of $\overline{Y}$ at a collection of distinct
points $p_{\Pi1},\cdots p_{\Pi n}$ where $p_{\Pi i}\in\overline{D}_{i}$
has multiplicity $|\Pi i|$. For each $i$ such that $|\Pi i|>1$,
the weighted blowup at $p_{\Pi i}$ gives rise to a du Val singularity
of type $A_{|\Pi i|-1}$ in $\overline{\mathcal{Y}}_{\overline{{\bf s}},\overline{\phi}}$.
Therefore, we could view $\mathcal{X}_{\overline{{\bf s}}}$ as a
degenerate subfamily of the universal family of $U$ such that generic
members in this subfamily have singularities specified by the combinatorial
data of the folding from ${\bf s}$ to $\bar{{\bf s}}$. 
\begin{example}
Consider $(\mathbb{P}^{2},\overline{D})$ where $\overline{D}=\overline{D}_{1}+\overline{D}_{2}+\overline{D}_{3}$
is a triangle of lines in $\mathbb{P}^{2}$. On each $\overline{D}_{i}$,
we blow up at two distinct points in the smooth locus of $\overline{D}$
and obtain two exceptional curves $E_{2i-1}$ and $E_{2i}$. The new
pair $(Y,D)$ we get is a cubic surface whose anticanonical cycle
$D$ is the strict transform of $\overline{D}$ via the blowup $\rho:(Y,D)\rightarrow(\overline{Y},\overline{D})$.
Let $l$ be the class of the pull-back of a line in $\mathbb{P}^{2}$
via $\rho$. We have $D^{\perp}\simeq\mathbf{D}_{4}$ and the Weyl
group of $(Y,D)$ is the Weyl group $W(\mathbf{D}_{4})$ of ${\bf D}_{4}$.
Moreover, $e_{1}-e_{2},e_{3}-e_{4},e_{5}-e_{6},e_{1}+e_{2}+e_{3}$
form a basis of $K$ which under the identification $K\simeq D^{\perp}$
correspond to roots $E_{1}-E_{2},E_{3}-E_{4},E_{5}-E_{6},l-E_{1}-E_{2}-E_{3}$.
These roots form a simple root system in ${\bf D}_{4}$. Let $\Pi$
be the group generated by involutions $(12),(34),(56)$. Then $\Pi$
can be identified as the subgroup generated by reflections with respect
to roots $E_{1}-E_{2}$, $E_{3}-E_{4},$ $E_{5}-E_{6}$. Let ${\bf s}$
be the seed given by $(Y,D)$ and $\overline{{\bf s}}$ be the folded
seed given by the action of $\Pi$ on the index set. Then given a
very general $\overline{\phi}\in T_{\overline{K}^{*}}$, $\mathcal{X}_{\overline{{\bf s}},\overline{\phi}}$
up to codimension $2$ agrees with a singular cubic surface having
the following description. We blow up at a interior point $p_{i,\overline{\phi}}$
in $\overline{D}_{i}$ twice for each $i$. Since we assume that $\overline{\phi}$
is general, $p_{1,\overline{\phi}},p_{2,\overline{\phi}},p_{3,\overline{\phi}}$
are not collinear and therefore we could blow down the three $(-2)$-curves
simultaneously and get a singular cubic surface with three ordinary
double points.
\end{example}

\section{The action of Weyl groups on scattering diagrams: positive cases\label{sec:action-Weyl group-scattering diag}}

Throughout this section, we will focus on positive Looijenga pairs
$(Y,D)$ such that the corresponding $\mathcal{X}$-cluster variety
is non-acyclic. By Proposition \ref{prop:classification rank 2 clus. var.},
if $(Y,D)$ is positive and non-acyclic, then $D^{\perp}\simeq{\bf D}_{n}\,(n\geq4)$
or ${\bf E}_{n}\,(n=6,7,8)$. By Theorem 4.4 of \cite{TM}, in these
cases, we could assume that $(Y,D)$ has a toric model $\rho:(Y,D)\rightarrow(\mathbb{P}^{2},\overline{D})$
where $\overline{D}=\overline{D}_{1}+\overline{D}_{2}+\overline{D}_{3}$
is the toric boundary and $D=D_{1}+D_{2}+D_{3}$ is the strict transform
of $\overline{D}$. Denote by $l$ the class of the pullback of a
line in $\mathbb{P}^{2}$ via $\rho$. Let $\{E_{i}\}_{i\in I}$ be
the exceptional configuration of the toric model $\rho:(Y,D)\rightarrow\left(\mathbb{P}^{2},\overline{D}\right)$
as given on the left side of Figure \ref{fig:posacyc} and ${\bf s}=(e_{i})_{i\in I}$
the seed constructed from this toric model as described in subsection
\ref{subsec:Relationship-to-cluster}. We construct irreducible root
systems $\mathbf{D}_{n}$ ($n\geq4)$ or ${\bf E}_{n}$ ($n=6,7,8$)
explicitly as shown on the right side of the same figure.

\begin{minipage}{\linewidth} \begin{tikzpicture} \begin{scope}[shift={(-3,0)},scale=0.4] \draw (1,5.5)--(-5,-2); \draw  (-1,5.5) -- (5,-2); \draw (-5,-0.5) -- (5,-0.5); \draw (-1.6,4) -- (-0.4,2.5); \draw (-2.6,3) -- (-1.4,1.5); \draw (1.6,4) -- (0.4,2.5); \draw (2.6,3) -- (1.4,1.5);  \draw (-2,0.3) -- (-2,-1.3); \draw (-0.5,0.3) -- (-0.5,-1.3); \draw (2,0.3) -- (2,-1.3); \node [label=below:$\cdots$] at (0.5,-0.5){}; \node [label=above:\tiny$E_{1}$] at (-1.6,3.6){}; \node [label=above:\tiny$E_{2}$] at (-2.6,2.6){}; \node [label=above:\tiny$E_{3}$] at (1.6,3.6){}; \node [label=above:\tiny$E_{4}$] at (2.6,2.6){}; \node [label=above:\tiny$E_{5}$] at (-2,0){}; \node [label=above:\tiny$E_{6}$] at (-0.5,0){}; \node [label=above:\tiny$E_{n+2}$] at (1.8,0){}; \node [label=above:\tiny$D_{2}$] at (-1,5.2){}; \node [label=above:\tiny$D_{1}$] at (1,5.2){}; \node [label=above:\tiny$D_{3}$] at (-5,-0.8){}; \node [label=below:$n-2$] at (0,-1.5){}; 
\draw [thick,decoration={brace, mirror,raise=0.2cm},decorate] (-2,-1.3) -- (2,-1.3); \node [label=left:(i)] at (-4.3,5.5) {}; \end{scope} 

\begin{scope}[scale=0.8] \dynkin[edge length=2cm]{D}{} \node  [scale =.6] at (0.4,-0.3){$E_{n+1}-E_{n+2}$}; \node  [scale =.6] at (5.2,-0.3){$E_{5}-E_{6}$}; \node  [scale =.6] at (8.8,0){$l-E_{1}-E_{3}-E_{5}$}; \node  [scale =.6] at (9,1.8){$E_{1}-E_{2}$}; \node  [scale =.6] at (9,-1.8){$E_{3}-E_{4}$}; \node [label=below:$\mathbf{D}_{n}\,(n\geq4)$] at (0.4,2.5){}; \end{scope} \end{tikzpicture} \end{minipage}\\\\
\begin{minipage}{\linewidth} \begin{tikzpicture} \begin{scope}[shift={(-3,0)},scale=0.4] \draw (1,5.5)--(-5,-2); \draw  (-1,5.5) -- (5,-2);  \draw (-5,-0.5) -- (5,-0.5); \draw (-1.6,4) -- (-0.4,2.5);  \draw (-2.6,3) -- (-1.4,1.5); \draw (1.6,4) -- (0.4,2.5);  \draw (2.6,3) -- (1.4,1.5);   \draw (3.6,2) -- (2.4,0.5); \draw (-1.5,0.3) -- (-1.5,-1.3);  \draw (0,0.3) -- (0,-1.3);  \draw (1.5,0.3) -- (1.5,-1.3); \node [label=above:\tiny$E_{1}$] at (-1.6,3.6){};  \node [label=above:\tiny$E_{2}$] at (-2.6,2.6){};  \node [label=above:\tiny$E_{3}$] at (1.6,3.6){};  \node [label=above:\tiny$E_{4}$] at (2.6,2.6){};  \node [label=above:\tiny$E_{5}$] at (3.6,1.6){};  \node [label=below:\tiny$E_{6}$] at (-1.5,-1){};  \node [label=below:\tiny$E_{7}$] at (0,-1){};  \node [label=below:\tiny$E_{8}$] at (1.5,-1){}; \node [label=above:\tiny$D_{2}$] at (-1,5.2){};  \node [label=above:\tiny$D_{1}$] at (1,5.2){};  \node [label=above:\tiny$D_{3}$] at (-5,-0.8){}; \node [label=left:(ii)] at (-4,5.5) {};
\end{scope}

\begin{scope}[scale=0.8] \dynkin[edge length=2cm]{E}{6} \node  [scale =.6] at (4,-0.3){$l-E_{1}-E_{3}-E_{6}$}; \node  [scale =.6] at (6,-0.3){$E_{7}-E_{6}$}; \node  [scale =.6] at (8,-0.3){$E_{8}-E_{7}$};
\node  [scale =.6] at (2,-0.3){$E_{4}-E_{3}$}; \node  [scale =.6] at (0,-0.3){$E_{5}-E_{4}$}; \node  [scale =.6] at (4.8,2){$E_{2}-E_{1}$}; \node [label=below:$\mathbf{E}_{6}$] at (0,2.5){}; \end{scope} \end{tikzpicture} \end{minipage}\\\\
\begin{minipage}{\linewidth} \begin{tikzpicture} \begin{scope}[shift={(-3,0)},scale=0.4] \draw (1,5.5)--(-5,-2); \draw  (-1,5.5) -- (5,-2); \draw (-5,-0.5) -- (5,-0.5);
\draw (-1.6,4) -- (-0.4,2.5); \draw (-2.6,3) -- (-1.4,1.5);
\draw (1.6,4) -- (0.4,2.5); \draw (2.4,3.2) -- (1.2,1.7);  \draw (3.2,2.4) -- (2,0.9);
\draw (-2.3,0.3) -- (-2.3,-1.3); \draw (-0.8,0.3) -- (-0.8,-1.3); \draw (0.7,0.3) -- (0.7,-1.3); \draw (2.2,0.3) -- (2.2,-1.3);
\node [label=above:\tiny$E_{1}$] at (-1.6,3.6){}; \node [label=above:\tiny$E_{2}$] at (-2.6,2.6){}; \node [label=above:\tiny$E_{3}$] at (1.5,3.6){}; \node [label=above:\tiny$E_{4}$] at (2.4,2.8){}; \node [label=above:\tiny$E_{5}$] at (3.4,1.8){}; \node [label=below:\tiny$E_{6}$] at (-2.3,-1){}; \node [label=below:\tiny$E_{7}$] at (-0.8,-1){}; \node [label=below:\tiny$E_{8}$] at (0.7,-1){}; \node [label=below:\tiny$E_{9}$] at (2.2,-1){};
\node [label=above:\tiny$D_{2}$] at (-1,5.2){}; \node [label=above:\tiny$D_{1}$] at (1,5.2){}; \node [label=above:\tiny$D_{3}$] at (-5,-0.8){}; \node [label=left:(iii)] at (-4,5.5) {};
\end{scope}
\begin{scope}[scale=0.8] \dynkin[edge length=2cm]{E}{7} \node  [scale =.6] at (4,-0.3){$l-E_{1}-E_{3}-E_{6}$}; \node  [scale =.6] at (6,-0.3){$E_{7}-E_{6}$}; \node  [scale =.6] at (8,-0.3){$E_{8}-E_{7}$}; \node  [scale =.6] at (10,-0.3){$E_{9}-E_{8}$}; \node  [scale =.6] at (2,-0.3){$E_{4}-E_{3}$}; \node  [scale =.6] at (0,-0.3){$E_{5}-E_{4}$}; \node  [scale =.6] at (4.8,2){$E_{2}-E_{1}$}; \node [label=below:$\mathbf{E}_{7}$] at (0,2.5){}; \end{scope} \end{tikzpicture} \end{minipage}\\\\
\begin{minipage}{\linewidth} \begin{tikzpicture} \begin{scope}[shift={(-3,0)},scale=0.4] \draw (1,5.5)--(-5,-2); \draw  (-1,5.5) -- (5,-2); \draw (-5,-0.5) -- (5,-0.5);
\draw (-1.6,4) -- (-0.4,2.5); \draw (-2.6,3) -- (-1.4,1.5);
\draw (1.6,4) -- (0.4,2.5); \draw (2.4,3.2) -- (1.2,1.7);  \draw (3.2,2.4) -- (2,0.9);
\draw (-2.4,0.3) -- (-2.4,-1.3); \draw (-1.2,0.3) -- (-1.2,-1.3); \draw (0,0.3) -- (0,-1.3); \draw (1.2,0.3) -- (1.2,-1.3); \draw (2.4,0.3) -- (2.4,-1.3);
\node [label=above:\tiny$E_{1}$] at (-1.6,3.6){}; \node [label=above:\tiny$E_{2}$] at (-2.6,2.6){}; \node [label=above:\tiny$E_{3}$] at (1.5,3.6){}; \node [label=above:\tiny$E_{4}$] at (2.4,2.8){}; \node [label=above:\tiny$E_{5}$] at (3.4,1.8){}; \node [label=below:\tiny$E_{6}$] at (-2.4,-1){}; \node [label=below:\tiny$E_{7}$] at (-1.2,-1){}; \node [label=below:\tiny$E_{8}$] at (0,-1){}; \node [label=below:\tiny$E_{9}$] at (1.2,-1){}; \node [label=below:\tiny$E_{10}$] at (2.4,-1){};
\node [label=above:\tiny$D_{2}$] at (-1,5.2){}; \node [label=above:\tiny$D_{1}$] at (1,5.2){}; \node [label=above:\tiny$D_{3}$] at (-5,-0.8){};
\node [label=left:(iv)] at (-4,5.5) {}; \end{scope}
\begin{scope}[scale=0.8] \dynkin[edge length=2cm]{E}{8} \node  [scale =.6] at (4,-0.3){$l-E_{1}-E_{3}-E_{6}$}; \node  [scale =.6] at (6,-0.3){$E_{7}-E_{6}$}; \node  [scale =.6] at (8,-0.3){$E_{8}-E_{7}$}; \node  [scale =.6] at (10,-0.3){$E_{9}-E_{8}$}; \node  [scale =.6] at (12,-0.3){$E_{10}-E_{9}$}; \node  [scale =.6] at (2,-0.3){$E_{4}-E_{3}$}; \node  [scale =.6] at (0,-0.3){$E_{5}-E_{4}$}; \node  [scale =.6] at (4.8,2){$E_{2}-E_{1}$};
\node [label=below:$\mathbf{E}_{8}$] at (0,2.5){}; \end{scope}
\end{tikzpicture} 
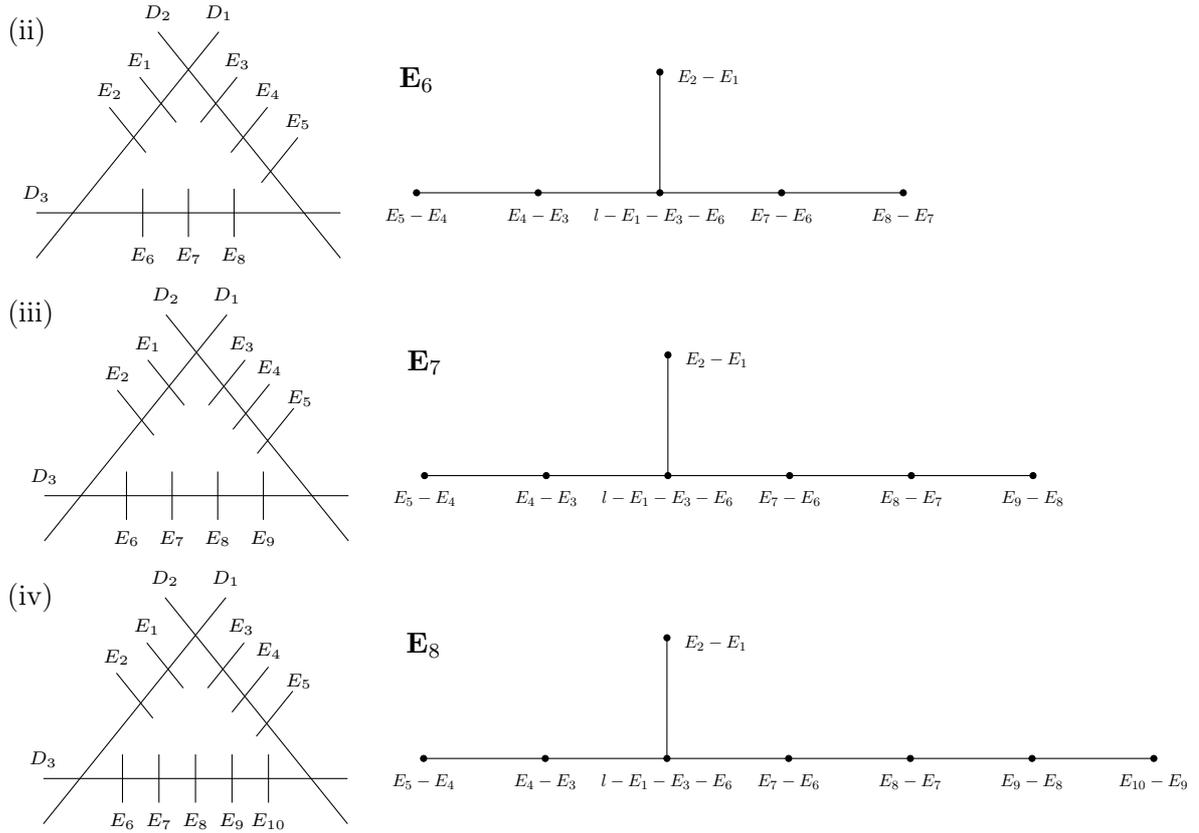
\captionof{figure}{$(Y,D)$ when $D^{\perp}\simeq{\mathbf{D}_{n}}\,(n\geq4)$ or $\mathbf{E}_{n}\,(n=6,7,8)$} \label{fig:posacyc} \end{minipage}\\\\

\subsection{Fundamental example: Blowing up $\mathbb{P}^{2}$ at three points.}

Consider the seed ${\bf s}$ given following exchange matrix \footnote{The exchange matrix we use here follows the convention of \cite{FZ07}.
To agree with the convention of \cite{GHKK18}, one need to replace
$\epsilon$ with $-\epsilon$. The two papers use different conventions
to define $\mathcal{X}$-cluster varieties. When we define the $\mathcal{X}$-cluster
varieties, we follows the convention of \cite{GHKK18}, but when we
do calculations we follow the convention of \cite{FZ07} because we
use Sage, which follows \cite{FZ07}, to do all computations.}
\[
\epsilon=\begin{pmatrix}0 & -1 & 1\\
1 & 0 & -1\\
-1 & 1 & 0
\end{pmatrix}.
\]
The element $e_{1}+e_{2}+e_{3}$ generates the kernel $K$ of the
skew-symmetric form. The corresponding $\mathcal{X}$-cluster variety
has the fibration $\mathcal{X}\rightarrow T_{K^{*}}$ such that a
generic fiber, up to codimension $2$, is a log Calabi-Yau surface
obtained by blowing up $\mathbb{P}^{2}$ at a smooth point on on each
toric boundary divisor $\overline{D}_{i}$ and then deleting the strict
transform $D$ of $\overline{D}$. After the identification of $K$
with $D^{\perp}$, $e_{1}+e_{2}+e_{3}$ corresponds to the root $l-E_{1}-E_{2}-E_{3}$
where $l$ is the pullback of a line in $\mathbb{P}^{2}$ via the
blowup and $E_{i}$s are the three exceptional divisors. Let 
\[
{\bf s}^{'}=\mu_{1}\circ\mu_{3}\circ\mu_{2}\circ\mu_{1}({\bf s}).
\]
The seed ${\bf s}^{'}=(e_{i}^{'})_{i\in I}$ has exchange matrix 
\[
\epsilon^{'}=\begin{pmatrix}0 & 1 & -1\\
-1 & 0 & 1\\
1 & -1 & 0
\end{pmatrix}
\]
and its $\mathbf{g}$-matrix is
\[
\begin{pmatrix} &  & -1\\
 & -1\\
-1
\end{pmatrix}
\]
So up to seed isomorphisms, ${\bf s}$ and ${\bf s}^{'}$ are related
by the Donaldson-Thomas transformation. Under $\mu_{{\bf s},{\bf s}^{'}}^{t}:M_{\mathbb{R}}\rightarrow M_{\mathbb{R}}$
where $\mu_{{\bf s,{\bf s^{'}}}}^{t}$ is the geometric tropicalization
of $\mu_{{\bf s,{\bf s}^{'}}}:T_{M,{\bf s}}\dashrightarrow T_{M,{\bf s}^{'}}$,
$-\{e_{1},\cdot\}$ is mapped to $-\{e_{1}^{'},\cdot\}$, $-\{e_{2}^{'},\cdot\}$
to $-\{e_{3}^{'},\cdot\}$ and $-\{e_{3}^{'},\cdot\}$ to $-\{e_{2}^{'},\cdot\}$.
Thus we define the seed isomorphism 
\begin{align*}
\gamma:{\bf s}^{'} & \rightarrow{\bf s},\,\,e_{1}^{'}\mapsto e_{1},\,\,e_{2}^{'}\mapsto e_{3},\,\,e_{3}^{'}\mapsto e_{2}
\end{align*}
Let $\delta=\gamma\circ\mu_{1}\circ\mu_{3}\circ\mu_{2}\circ\mu_{1}$.
Then $\delta$ is an element in the cluster modular group of $\mathcal{X}_{{\bf s}}$
and it induces a birational automorphism of $T_{N,{\bf s}}$ such
that 
\[
{\rm \delta}^{*}\left(z^{e_{1}}\right)=-z^{e_{3}}\frac{F_{1}}{F_{2}},\,\,{\rm \delta}^{*}\left(z^{e_{2}}\right)=-z^{e_{1}}\frac{F_{2}}{F_{3}},\,\,{\rm \delta}^{*}\left(z^{e_{3}}\right)=-z^{e_{2}}\frac{F_{3}}{F_{1}}
\]
where $F_{i}=f(z^{e_{i}},z^{e_{i+2}})$ with $f(z_{1},z_{2})=1+z_{1}+z_{1}z_{2}$.
Split $N$ as $N/K\oplus K$ via the following section: 
\begin{align*}
N/K & \hookrightarrow N\\
\overline{e_{1}} & \mapsto e_{1}\\
\overline{e_{3}} & \mapsto e_{3}.
\end{align*}
Then the above splitting induces an isomorphism of lattices $M\simeq K^{\perp}\oplus K^{*}$
and hence an isomorphism between toric varieties
\begin{align*}
{\rm TV}\left(\Sigma_{{\bf s}}\right) & \stackrel{\sim}{\rightarrow}\mathbb{P}^{2}\times\mathbb{C}^{*}
\end{align*}
Thus, we could write $\delta$ as a birational automorphism of toric
variety $\mathbb{P}^{2}\times\mathbb{C}^{*}$:
\begin{align*}
{\rm \delta}:\mathbb{P}^{2}\times\mathbb{C}^{*} & \dashrightarrow\mathbb{P}^{2}\times\mathbb{C}^{*}\\
\left(\left[z^{\overline{e_{3}}}:1:z^{-\overline{e_{1}}}\right],\alpha\right) & \longmapsto\left(\left[\alpha^{-1}z^{\overline{e_{1}}+\overline{e_{3}}}\frac{\overline{F}_{3}}{\overline{F_{1}}}:1:z^{\overline{e_{3}}}\frac{\overline{F}_{2}}{\overline{F}_{1}}\right],\alpha^{-1}\right)
\end{align*}
where $\alpha=z^{e_{1}+e_{2}+e_{3}}$, $\overline{F}_{1}=1+z^{\overline{e_{1}}}+z^{\overline{e_{1}}+\overline{e_{3}}}$,
$\overline{F}_{2}=1+\alpha z^{-\overline{e_{1}}-\overline{e_{3}}}+\alpha z^{-\overline{e_{3}}}$
and $\overline{F}_{3}=1+z^{\overline{e_{3}}}+\alpha z^{-\overline{e_{1}}}$.
Homogenize our equation by setting $z^{\overline{e_{3}}}=\frac{X_{1}}{X_{2}}$
and $z^{-\overline{e_{1}}}=\frac{X_{3}}{x_{2}}$, we get 
\begin{align*}
\left[\left(X_{1}:X_{2}:X_{3}\right),\alpha\right]\longmapsto & ([\alpha^{-1}X_{1}\left(X_{1}+X_{2}+\alpha X_{3}\right):X_{2}\left(X_{1}+X_{2}+X_{3}\right)\\
 & :X_{3}\left(X_{1}+\alpha X_{2}+\alpha X_{3}\right)]
\end{align*}
For $\alpha\neq1$, $\delta$ restricts to a birational automorphism
of $\mathbb{P}^{2}$:
\begin{align*}
\delta_{\alpha}:\mathbb{P}^{2}\dashrightarrow & \mathbb{P}^{2}\\
\left[X_{1}:X_{2}:X_{3}\right]\longmapsto & ([\alpha^{-1}X_{1}\left(X_{1}+X_{2}+\alpha X_{3}\right):X_{2}\left(X_{1}+X_{2}+X_{3}\right)\\
 & :X_{3}\left(X_{1}+\alpha X_{2}+\alpha X_{3}\right)]
\end{align*}
We can minimally resolve $\delta_{\alpha}$ as shown in the following
diagram, where $\rho_{1}:Y_{\alpha}\rightarrow\mathbb{P}^{2}$ is
the blowup of $\mathbb{P}^{2}$ at $p_{1}=[0:1:-1]$ , $p_{2}=[1:0:-\alpha^{-1}]$
and $p_{3}=\left[1:-1:0\right]$.

 \begin{minipage}{\linewidth} \begin{center}  \begin{tikzcd} [row sep=1.5cm]& Y_{\alpha} \arrow[dl,"\rho_{1}"'] \arrow[dr,"\rho_{2}"] & \\   \mathbb{P}^{2} \arrow[rr, dotted, "\delta_{\alpha}"] & &  \mathbb{P}^{2}   \end{tikzcd}   \end{center} \end{minipage} \\\\To
understand the morphism $\rho_{2}:Y_{\alpha}\rightarrow\mathbb{P}^{2}$,
we can look at the inverse image of each boundary divisor of $\mathbb{P}^{2}$
under ${\rm \delta}_{\alpha}$, as shown in the following figure.
Let $L_{ij}(1\leq i<j\leq3)$ be the line in $\mathbb{P}^{2}$ joining
$p_{i}$ and $p_{j}$. Then $L_{23}$ is given by the equation $X_{1}+X_{2}+\alpha X_{3}=0$,
$L_{13}$ by the equation $X_{1}+X_{2}+X_{3}=0$ and $L_{12}$ by
the equation $X_{1}+\alpha X_{2}+\alpha X_{3}=0$. After blowing up
at $p_{1},p_{2},$ and $p_{3}$, the strict transform of $L_{12},L_{23},L_{13}$
gives another exceptional configuration of $Y_{\alpha}$ and $\rho_{2}$
is the morphism that blows down this exceptional configuration. Therefore
$\delta$ coincides with the reflection with respect to the root $l-E_{1}-E_{2}-E_{3}$.\\

\begin{minipage}{\linewidth} \begin{center}  \begin{tikzpicture}[scale=0.5] \begin{scope}[shift={(-14,0)}] \draw (1,5.5)--(-5,-2); \draw [red] (-1,5.5) -- (5,-2); \draw (-5,-0.5) -- (5,-0.5); \draw (6,-0.5) node {\tiny{$X_{3}=0$}}; \draw (2,6) node {\tiny{$X_{2}=0$}}; \draw (-2,6) node {\tiny{$X_{1}=0$}}; \path [name path=line 1] (-0.5,6) -- (-1.5,-2); \path [name path=line a] (1,5.5)--(-5,-2); \path [name path=line b] (-5,-0.5) -- (5,-0.5); \path [name path=line c] (-1,5.5) -- (5,-2); \draw [red] (-0.5,6) -- (-1.5,-2); \draw (-1.5,-2.5) node {\tiny{$X_{1}+X_{2}+\alpha X_{3}=0$}}; \path [name intersections={of= line 1 and line a, by=x}]; \fill [red] (x) circle (4pt); \path [name intersections={of= line 1 and line b, by=y}]; \fill [red] (y) circle (4pt); \path [name path=line 2]  (x) -- (5,-1); \path [name intersections={of= line 2 and line c, by=z}]; \fill [red] (z) circle (4pt); \draw (x) node [xshift=-1.4cm] {\tiny$p_{2}=[1:0:-\alpha^{-1}]$}; \draw (z) node [xshift=1.2cm]{\tiny$p_{1}=[0:1:-1]$}; \draw (y) node [xshift=1.3cm,yshift=-0.2cm]{\tiny$p_3=[1:-1:0]$}; \end{scope}    \begin{scope}  \draw [red](1,5.5)--(-5,-2); \draw  (-1,5.5) -- (5,-2); \draw (-5,-0.5) -- (5,-0.5); \draw (6,-0.5) node {\tiny{$X_{3}=0$}}; \draw (2,6) node {\tiny{$X_{2}=0$}}; \draw (-2,6) node {\tiny{$X_{1}=0$}}; \path [name path=line 1] (-0.5,6) -- (-1.5,-2); \path [name path=line a] (1,5.5)--(-5,-2); \path [name path=line b] (-5,-0.5) -- (5,-0.5); \path [name path=line c] (-1,5.5) -- (5,-2); \draw (0,-1.5) node {\tiny{$X_{1}+X_{2} + X_{3}=0$}}; \path [name intersections={of= line 1 and line a, by=x}]; \fill [red] (x) circle (4pt); \path [name intersections={of= line 1 and line b, by=y}]; \fill [red] (y) circle (4pt); \path [name path=line 2]  (x) -- (5,-1); \path [name intersections={of= line 2 and line c, by=z}]; \fill [red] (z) circle (4pt); \draw [red, add=1 and 0.5] (y) to (z); \end{scope}\end{tikzpicture} \end{center} \end{minipage}\\\\
 \begin{minipage}{\linewidth} \begin{center}  \begin{tikzpicture}[scale=0.5] \draw (1,5.5)--(-5,-2); \draw  (-1,5.5) -- (5,-2); \draw [red](-5,-0.5) -- (5,-0.5); \draw (6,-0.5) node {\tiny{$X_{3}=0$}}; \draw (2,6) node {\tiny{$X_{2}=0$}}; \draw (-2,6) node {\tiny{$X_{1}=0$}}; \path [name path=line 1] (-0.5,6) -- (-1.5,-2); \path [name path=line a] (1,5.5)--(-5,-2); \path [name path=line b] (-5,-0.5) -- (5,-0.5); \path [name path=line c] (-1,5.5) -- (5,-2); \draw (-3.5,4) node {\tiny{$X_{1}+ \alpha X_{2} + \alpha X_{3}=0$}}; \draw [name intersections={of= line 1 and line a, by=x}] [red, add=.1 and .1] (x) to (5,-1); \fill [red] (x) circle (4pt); \path [name intersections={of= line 1 and line b, by=y}]; \fill [red] (y) circle (4pt); \path [name path=line 2]  (x) -- (5,-1); \path [name intersections={of= line 2 and line c, by=z}]; \fill [red] (z) circle (4pt);   \end{tikzpicture} \end{center} \end{minipage}\\
\begin{defn}
Given a Looijenga pair $(Y,D)$ and a root $\alpha$ in $D^{\perp}$,
denote by $r_{\alpha}$ the reflection automorphism of ${\rm Pic}(Y)$
with respect to the root $\alpha$.

Throughout the rest of this section, $(Y,D)$ will be a generic Looijenga
pair with $D^{\perp}\simeq{\bf D}_{n}(n\geq4)$ or ${\bf E}_{n}(n=6,7,8$)
and ${\bf s}$ the seed constructed from the toric model as shown
in Figure \ref{fig:posacyc}. We also use $\mathcal{X}_{\mathbf{D}_{n}}(n\geq4)$
or $\mathcal{X}_{\mathbf{E}_{n}}(n=6,7,8)$ to denote the corresponding
$\mathcal{X}$-cluster variety when we want to emphasize the particular
deformation type of $U=Y\setminus D$. Let $\mathfrak{D}_{{\bf s}}$
be the scattering diagram corresponding to the seed ${\bf s}$. Since
${\bf s}$ is skew-symmetric, $\mathcal{A}_{{\bf s}}^{\vee}\simeq\mathcal{X}_{{\bf s}}$.
Hence the scattering diagram $\mathfrak{D}_{{\bf s}}$ lives on $\mathcal{X}_{{\bf s}}(\mathbb{R}^{T})$.
Moreover, we have the following proposition:
\end{defn}

\begin{prop}
\label{prop:Weyl group preseves cluster complex}The Weyl group $W(Y,D)$
of $(Y,D)$ acts faithfully on the scattering diagram $\mathfrak{D}_{{\bf s}}$
as a subgroup of the cluster modular group. In particular, its action
preserves the cluster complex in $\mathfrak{D}_{{\bf s}}$. 
\end{prop}

\begin{proof}
For simple roots of the form $E_{i_{1}}-E_{i_{2}}$ where $E_{i_{1}}$
and $E_{i_{2}}$ are two distinct exceptional curves along the same
boundary component of $D$, we have $\{e_{i_{1}},\cdot\}=\{e_{i_{2}},\cdot\}$.
The reflection $r_{E_{i_{1}}-E_{i_{2}}}$ acts on the scattering diagram
linearly by interchanging the dual basis elements $e_{i_{1}}^{*}$
and $e_{i_{2}}^{*}$. For simple roots of the form $l-E_{\epsilon_{1}}-E_{\epsilon_{2}}-E_{\epsilon_{3}}$
where $E_{\epsilon_{i}}$ is an exceptional curve along $\overline{D}_{i}$,
we have seen in the fundamental example that the reflection with respect
these roots are also elements in the cluster modular group. Since
the simple roots form a root basis for $D^{\perp}$, $W(Y,D)$ is
generated by reflections with respect to simple roots. Hence we conclude
that the action of $W(Y,D)$ preserves the cluster complex.
\end{proof}
Here is the outline for the rest of this section. In subsection \ref{subsec:Factorizations-of-DT-transformat}
and subsection \ref{subsec:Factorizations-of-DT-transformat-1}, we
will find a special element ${\bf w}$\footnote{Since cluster transformations are unique up to seed automorphisms,
${\bf w}$ is unique up to compositions by seed automorphisms of ${\bf s}$,
which are generated by reflections by simple roots of the form $E_{i_{1}}-E_{i_{2}}$
where $E_{i_{1}}$ and $E_{i_{2}}$ are two distinct exceptional curves
along the same boundary component of $D$.} in the Weyl group of $W(D^{\perp})$ with $D^{\perp}\simeq\mathbf{D}_{n}(n\geq4)$
or $\mathbf{E}_{n}(n=6,7,8)$ such that the action of ${\bf w}$ on
$\mathcal{X}_{D^{\perp}}$ is close to the Donaldson-Thomas transformation
in the cluster modular group. As a result, we could approximate ${\rm DT}_{\mathcal{X}_{D^{\perp}}}$
by ${\bf w}$ and factorize ${\rm DT}_{\mathcal{X}_{D^{\perp}}}$
into cluster transformations since the action of ${\bf w}$ on $\mathcal{X}_{D^{\perp}}$
is cluster. For details of the Donaldson-Thomas transformations of
cluster varieties, see \cite{GS16}. In subsection \ref{subsec:Application-of-folding},
we apply the folding procedure as described in section \ref{sec:The-geometry-of}
to ${\bf s}$, $\mathcal{X}_{{\bf s}}$ and $\mathfrak{D}_{{\bf s}}$.
The main result will be Theorem \ref{thm:The-scattering-diagram}. 

\subsection{\label{subsec:Factorizations-of-DT-transformat}Factorizations of
DT-transformations as cluster transformations for $\mathcal{X}_{\mathbf{D}_{n}}\,(n\protect\geq4)$.}

Suppose $D^{\perp}\simeq\mathbf{D}_{n}\,(n\geq4)$. For any $5\leq i\leq n+2$,
denote by ${\bf w}_{i}$ the composition of reflections 
\begin{align}
{\bf w}_{i} & =r_{l-E_{1}-E_{3}-E_{i}}\circ r_{l-E_{2}-E_{4}-E_{i}}.\label{eq:-3_w_i}
\end{align}
Let ${\bf s}_{w}=(e_{i}^{w})_{i\in I}$ be the seed we get after performing
the following sequence of mutations
\begin{equation}
[5,1,3,2,4,5,\,\,6,1,3,2,4,6,\,\,\cdots\,\,n+2,1,3,2,4,n+2]\quad\left(n\geq4\right).\label{eq:-2}
\end{equation}
We denote this sequence of mutations by $W_{n}$. Let $\gamma:{\bf s}_{w}\rightarrow{\bf s}$
be the seed isomorphism such that if $n$ is even, $\gamma(e_{i}^{w})=e_{i}$
for all $i$ and if $n$ is odd, $\gamma(e_{1}^{w})=e_{3},$ $\gamma(e_{2}^{w})=e_{4}$,
$\gamma(e_{i}^{w})=e_{i}$ for all $i\geq5$. It follows from the
mutation formula for a single reflection of $r_{l-E_{\epsilon_{1}}-E_{\epsilon_{2}}-E_{\epsilon_{3}}}$
($\epsilon_{1}\in\{1,2\}$, $\epsilon_{2}\in\{3,4\}$ and $\epsilon_{3}\in\{5,6,\cdots,n+1,n+2\}$)
that the action of 
\begin{equation}
{\bf w}_{5}\circ{\bf w}_{6}\circ\cdots\circ{\bf w}_{n+2}\label{eq:-3 D_n}
\end{equation}
is given by $\gamma\circ W_{n}$.
\begin{lem}
\label{lem:weyl gp elem approximating DT} For any $5\leq i,j\leq n+2$
($n\geq4$), ${\bf w}_{i}$ and ${\bf w}_{j}$ commute with each other.
In particular, the Weyl group element ${\bf w}_{5}\circ{\bf w}_{6}\circ\cdots\circ{\bf w}_{n+2}$
is of order $2$.
\end{lem}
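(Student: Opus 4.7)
The plan is to reduce both claims to explicit calculations with the intersection form on $\mathrm{Pic}(Y)$. First I would verify that the two roots defining ${\bf w}_i$, namely $\alpha_i := l - E_1 - E_3 - E_i$ and $\beta_i := l - E_2 - E_4 - E_i$, satisfy $\alpha_i^2 = \beta_i^2 = -2$ and $\alpha_i \cdot \beta_i = 0$, using $l^2 = 1$, $E_k^2 = -1$, $l \cdot E_k = 0$, and $E_k \cdot E_{k'} = 0$ for $k \neq k'$. Orthogonality implies that $r_{\alpha_i}$ and $r_{\beta_i}$ commute, so ${\bf w}_i$ is an involution and admits the closed form
\[
{\bf w}_i(x) = x + (\alpha_i \cdot x)\,\alpha_i + (\beta_i \cdot x)\,\beta_i.
\]

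Next I would use this formula to compute the action of ${\bf w}_i$ on the basis $\{l, E_1, \ldots, E_{n+2}\}$ of $\mathrm{Pic}(Y)$. The key observation is that for each $k \in \{5, \ldots, n+2\} \setminus \{i\}$, the class $E_k$ is orthogonal to both $\alpha_i$ and $\beta_i$, so ${\bf w}_i(E_k) = E_k$. Thus ${\bf w}_i$ acts nontrivially only on the rank six sublattice spanned by $\{l, E_1, E_2, E_3, E_4, E_i\}$, and I would record the explicit images there, e.g.\ ${\bf w}_i(l) = 3l - E_1 - E_2 - E_3 - E_4 - 2E_i$, ${\bf w}_i(E_1) = l - E_3 - E_i$, ${\bf w}_i(E_i) = 2l - E_1 - E_2 - E_3 - E_4 - E_i$ (and analogously for $E_2, E_3, E_4$).

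For the commutativity of ${\bf w}_i$ and ${\bf w}_j$ with $i \neq j$, note that both maps fix every $E_k$ with $k \in \{5, \ldots, n+2\} \setminus \{i, j\}$, so it suffices to verify ${\bf w}_i \circ {\bf w}_j = {\bf w}_j \circ {\bf w}_i$ on the sublattice $\mathrm{span}_{\mathbb{Z}}(l, E_1, E_2, E_3, E_4, E_i, E_j)$. This reduces to a finite check on seven basis vectors using the table from the previous step; the resulting expressions are manifestly symmetric in the exchange $i \leftrightarrow j$. For example, both $({\bf w}_i \circ {\bf w}_j)(l)$ and $({\bf w}_j \circ {\bf w}_i)(l)$ evaluate to $5l - 2E_1 - 2E_2 - 2E_3 - 2E_4 - 2E_i - 2E_j$. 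The second assertion then follows formally: since ${\bf w}_5, \ldots, {\bf w}_{n+2}$ are pairwise commuting involutions, their composition is also an involution.

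The main technical obstacle is nothing more than bookkeeping in the explicit calculations of the second and third steps; no deeper structural input beyond the orthogonality $\alpha_i \perp \beta_i$ and the fixed-point behavior of ${\bf w}_i$ on $E_k$ for $k \in \{5, \ldots, n+2\} \setminus \{i\}$ is required.
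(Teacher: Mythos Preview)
Your proposal is correct, but it takes a more computational route than the paper's argument. Both proofs begin identically: check that $\alpha_i := l - E_1 - E_3 - E_i$ and $\beta_i := l - E_2 - E_4 - E_i$ are orthogonal roots, so $r_{\alpha_i}$ and $r_{\beta_i}$ commute and ${\bf w}_i$ is an involution. From there you compute ${\bf w}_i$ explicitly on the basis $\{l, E_1, \ldots, E_{n+2}\}$ and verify ${\bf w}_i {\bf w}_j = {\bf w}_j {\bf w}_i$ by evaluating both sides on the seven relevant basis vectors. The paper instead uses the conjugation identity $w\, r_\gamma\, w^{-1} = r_{w(\gamma)}$: one computes directly that ${\bf w}_j(\alpha_i) = \beta_i$ and ${\bf w}_j(\beta_i) = \alpha_i$ for $i \neq j$, whence
\[
{\bf w}_j {\bf w}_i {\bf w}_j = r_{{\bf w}_j(\alpha_i)} \circ r_{{\bf w}_j(\beta_i)} = r_{\beta_i} \circ r_{\alpha_i} = {\bf w}_i,
\]
and commutativity follows since ${\bf w}_j^2 = \mathrm{id}$. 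Your approach has the virtue of being completely mechanical and requires no Weyl-group identities beyond the closed form for a product of commuting reflections; the paper's approach is shorter and more conceptual, reducing the entire commutativity check to the single observation that ${\bf w}_j$ swaps the two roots defining ${\bf w}_i$.
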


\begin{proof}
For any $5\leq i\leq n+2$, since
\[
\left\langle l-E_{1}-E_{3}-E_{i},\,l-E_{2}-E_{4}-E_{i}\right\rangle =0,
\]
the reflections $r_{l-E_{1}-E_{3}-E_{i}}$ and $r_{l-E_{2}-E_{4}-E_{i}}$
commute. Therefore, ${\bf w}_{i}$ is of order $2$ for any $5\leq i\leq n+2$.
For any $5\leq i,j\leq n+2$ with $i\neq j$, 
\begin{align*}
{\bf w}_{j}{\bf w}_{i}{\bf w}_{j} & =\left({\bf w}_{j}\circ r_{l-E_{1}-E_{3}-E_{i}}\circ{\bf w}_{j}\right)\left({\bf w}_{j}\circ r_{l-E_{2}-E_{4}-E_{i}}\circ{\bf w}_{j}\right)\\
 & =r_{l-E_{2}-E_{4}-E_{i}}\circ r_{l-E_{1}-E_{3}-E_{i}}\\
 & ={\bf w}_{i}
\end{align*}
Therefore, ${\bf w}_{i}$ and ${\bf w}_{j}$ commute for any $5\leq i,j\leq n+2$.
Together with that ${\bf w}_{i}$ is of order $2$ for any $5\leq i\leq n+2$,
we conclude that ${\bf w}_{5}\circ{\bf w}_{6}\circ\cdots\circ{\bf w}_{n+2}$
is of order $2$.
\end{proof}
Let $\Pi$ be the subgroup of the symmetric group $S_{n+2}$ generated
by involutions 
\[
(1\,\,2),(3\,\,4),(5\,\,6),(6\,\,7),\cdots,(n+1\,\,n+2)
\]
and $\Pi^{'}$ the subgroup of $\Pi$ generated by involutions $(1\,\,2)$
and $(3\,\,4)$.
\begin{lem}
\label{lem:c-matrix-reflection}After the sequence of mutations $W_{n}$
(\ref{eq:-2}), the $\mathbf{c}$-matrix $C^{w}$ of ${\bf s}_{w}$
is of the form 
\[
\begin{pmatrix}M_{n}\\
 & -I_{n-2}
\end{pmatrix}
\]
where $M_{n}$ is a $4\times4$ matrix and $I_{n-2}$ is the identity
matrix of rank $n-2$.
\end{lem}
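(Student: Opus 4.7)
The plan is to exploit the block structure $W_n = B_5 \cdot B_6 \cdots B_{n+2}$ with $B_i = [i,1,3,2,4,i]$ and proceed by induction on the number of blocks, tracking how the $\mathbf{c}$-matrix evolves. The key observation to feed the induction is that the cluster transformation realized by $B_5 \cdots B_i$ is, via the seed isomorphism $\gamma$ appearing in the discussion of \eqref{eq:-3 D_n}, the Weyl group element ${\bf w}_5 \circ \cdots \circ {\bf w}_i$ acting on the tropical $\mathcal{A}$-side, so the tropical geometry is constrained by the Weyl group identities established in Lemma \ref{lem:weyl gp elem approximating DT}.

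The inductive claim I would prove is: after applying $B_5 \cdot B_6 \cdots B_i$, the $\mathbf{c}$-vector at position $j$ equals $-e_j$ for every $j \in \{5,\ldots,i\}$, while for every $j \in \{i+1,\ldots,n+2\}$ the $\mathbf{c}$-vector remains the initial $e_j$ (because such an index has not yet been touched). The trivial base case is $i = 4$. For the inductive step with block $B_{i+1} = [i+1,1,3,2,4,i+1]$, two things must be verified: (a) the already-settled columns $c_j = -e_j$ for $j \in \{5,\ldots,i\}$ are preserved throughout $B_{i+1}$; and (b) after the six mutations of $B_{i+1}$, the column at index $i+1$ lands on $-e_{i+1}$. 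For (a), I would apply the $\mathbf{c}$-vector mutation rule $c_j^{\mathrm{new}} = c_j + [\mathrm{sgn}(c_\ell)\,\epsilon_{\ell j}]_+\, c_\ell$ when mutating at an index $\ell \in \{1,2,3,4,i+1\}\setminus\{j\}$: since $c_j = -e_j$ is supported only at coordinate $j$ (which is not among $\{1,2,3,4,i+1\}$ nor among the indices already equal to $-e_k$), the sign condition forces the adjustment to vanish, using the sign-coherence of $\mathbf{c}$-vectors together with the geometric fact that $\mathbf{w}_{i+1}$ fixes each $E_j$ for $j \in \{5,\ldots,n+2\}\setminus\{i+1\}$. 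For (b), the first $\mu_{i+1}$ flips $c_{i+1} = e_{i+1}$ to $-e_{i+1}$, the four middle mutations $[1,3,2,4]$ modify $c_{i+1}$ by multiples of the current $c_1,c_3,c_2,c_4$, and the last $\mu_{i+1}$ restores the outcome to $-e_{i+1}$ exactly because the net tropical action of $B_{i+1}$ on $e_{i+1}$ is the involution $\mathbf{w}_{i+1}$ restricted to the $\mathbb{Z}$-span of $e_{i+1}$ modulo the span of $\{e_1,e_2,e_3,e_4\}$.

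The main obstacle is the sign-tracking underlying (a) and the cancellation in (b): both require translating the geometric statement ``$\mathbf{w}_{i+1}$ is an involution fixing $E_j$ for $j \neq i+1$, $j\geq 5$'' into the combinatorics of $\mathbf{c}$-vector mutation and verifying that the intermediate signs of the exchange matrix entries $\epsilon_{\ell j}^{\mathrm{current}}$ cooperate with the signs of the current $c_\ell$. If one prefers a computational route, the base case $n = 4$ can be checked directly using Sage (as the author indicates is done throughout the paper), and the inductive step then reduces to recognizing that the block $B_{n+2}$ on top of $W_{n-1}$ acts trivially on columns $5,\ldots,n+1$ because of the preceding sign analysis; this yields the block form $\begin{pmatrix} M_n & \\ & -I_{n-2}\end{pmatrix}$, where $M_n$ is the $4\times 4$ matrix recording the cumulative effect of the $\mathbf{w}_j$'s on the sublattice spanned by $\{e_1,e_2,e_3,e_4\}$. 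In either approach, the commutativity of the ${\bf w}_j$'s provided by Lemma \ref{lem:weyl gp elem approximating DT} is what decouples the computation at columns $\geq 5$ from the interaction among columns $1,2,3,4$.
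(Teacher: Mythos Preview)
Your overall architecture---induction on the blocks $B_i$ and appeal to the Weyl group interpretation---matches the paper's, but your inductive hypothesis contains a genuine error that breaks step~(b). You assert that after $B_5\cdots B_i$ the $\mathbf{c}$-vector $c_j$ for $j\in\{i+1,\ldots,n+2\}$ ``remains the initial $e_j$ because such an index has not yet been touched.'' This is false: not mutating \emph{at} $j$ does not leave $c_j$ fixed; the mutation rule updates $c_j$ whenever you mutate at any $\ell$ adjacent to $j$, and every $j\geq 5$ is adjacent to each of $1,2,3,4$ in this quiver. Indeed the paper computes exactly this column: after $W_k$ (viewed inside the size-$(k{+}1)$ seed) one has $C^\alpha_{k+3}=e_1+e_2+e_3+e_4+e_{k+3}$, not $e_{k+3}$. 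What \emph{is} trivial before mutating at $k+3$ is the last \emph{row}---the $e_{k+3}$-coordinate of every $c_j$ with $j\neq k+3$ vanishes---and you have conflated the two. Consequently the opening move of your~(b), ``the first $\mu_{i+1}$ flips $c_{i+1}=e_{i+1}$ to $-e_{i+1}$,'' starts from a wrong premise, and the subsequent sign-tracking does not go through.

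The paper repairs this by replacing your column-freezing claim with an explicit computation of the moving column via the identification $K\simeq D^\perp$. Since $c_{k+3}-c_5$ lies in $K$ throughout, its value after $W_k$ equals the image of the root $E_{k+3}-E_5$ under $\mathbf{w}_5\circ\cdots\circ\mathbf{w}_{k+2}$, namely $2l-\sum_{i=1}^{5}E_i-E_{k+3}$, which corresponds to $\sum_{i=1}^{5}e_i+e_{k+3}$; combined with $c_5=-e_5$ from the induction hypothesis this pins down $c_{k+3}$. A second application after performing $\mathbf{w}_{k+3}$ yields $c_{k+3}=-e_{k+3}$. So the Weyl group is doing more than you allow: it is used to \emph{determine} the untouched column, not merely to certify that already-settled columns are preserved. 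Your instinct to route the argument through Lemma~\ref{lem:weyl gp elem approximating DT} is right, but it has to be applied to differences of $\mathbf{c}$-vectors lying in $K$, not to the individual $c_j$ directly.
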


\begin{proof}
We prove this statement by induction on $n$. When $n=4$, after the
sequence of mutations $W_{4}$, the $\mathbf{c}$-matrix is {\scriptsize{}
\[
\begin{pmatrix} & -1\\
-1\\
 &  &  & -1\\
 &  & -1\\
 &  &  &  & -1\\
 &  &  &  &  & -1
\end{pmatrix}.
\]
}So the statement holds for the base case. Suppose for $n=k(k\geq4)$,
the statement is true. When $n=k+1$, by induction hypothesis, after
the sequence of mutations $W_{k}$, the ${\bf c}$-matrix is of the
form
\[
C^{\alpha}=\begin{pmatrix}M_{k} &  & *\\
 & -I_{k-2} & *\\
* & * & *
\end{pmatrix}
\]
Here $M_{k}$ is a $4\times4$ matrix and we do not know yet what
the last column and the last row of the $\mathbf{c}$-matrix $C^{\alpha}$
are. We claim that the last row of $C^{\alpha}$ has $1$ at the last
entry and $0$ at all other entries; the last column of $C^{\alpha}$
has $1$ at the first four and the last entry and $0$ at all other
entries. Indeed, by the mutation formula for ${\bf c}$-vectors, since
we have not mutated in the $(k+3)$-th direction yet, the last row
of $C^{\alpha}$ must have all but the last entries equal to $0$.
Denote the $i^{th}$-column of the ${\bf c}$-matrix $C^{\alpha}$
by $C_{i}^{\alpha}$. To see that the last column $C_{k+3}^{\alpha}$
is of the desired form, recall that for the initial seed ${\bf s}=\left(e_{i}\right)_{i\in I}$,
under the identification $D^{\perp}\simeq K$ as given in Theorem
5.5 of \cite{GHK15}, $e_{k+3}-e_{5}$ is identified with the root
$E_{k+3}-E_{5}$ and the root 
\[
{\bf w}_{5}\circ{\bf w}_{6}\circ\cdots\circ{\bf w}_{k+2}(E_{k+3}-E_{5})=2l-\sum_{i=1}^{5}E_{i}-E_{k+3}
\]
is identified with $\sum_{i=1}^{5}e_{i}+e_{k+3}$ in $K$. So we know
that

\[
C_{k+3}^{\alpha}-C_{5}^{\alpha}=\sum_{i=1}^{5}e_{i}+e_{k+3}.
\]
Since $C_{5}^{\alpha}=-e_{5}$, we know that $C_{k+3}^{\alpha}=e_{1}+e_{2}+e_{3}+e_{4}+e_{k+3}$
. Thus, if we continue to perform the last reflection ${\bf w}_{k+3}$,
the ${\bf c}$-matrix will be of the form 
\[
C^{\beta}=\begin{pmatrix}M_{k+1} &  & *\\
 & -I_{k-2} & *\\
 &  & *
\end{pmatrix}
\]
Again, we consider the action of the Weyl group on $D^{\perp}$. The
root 
\[
{\bf w}_{5}\circ{\bf w}_{6}\circ\cdots\circ{\bf w}_{k+2}\circ{\bf w}_{k+3}(E_{k+3}-E_{5})=E_{5}-E_{k+3}
\]
is identified with $e_{5}-e_{k+3}$, so 
\[
C_{k+3}^{\beta}-C_{5}^{\beta}=e_{5}-e_{k+3}.
\]
Since $C_{5}^{\beta}=-e_{5}$, we have $C_{k+3}^{\beta}=-e_{k+3}$.
Hence the ${\bf c}$-matrix is of the desired form for $n=k+3$. Thus
the induction is complete.
\end{proof}
\begin{thm}
\label{Thm:The-Donaldson-Thomas-transformat}The Donaldson-Thomas
transformation of $\mathcal{X}_{{\rm {\bf D}}_{n}}$ $(n\geq4)$ is
cluster.
\begin{proof}
By Lemma \ref{lem:Pi-invariant chambers}, after the sequence of mutations
as given in \ref{eq:-2}, the $\mathbf{c}$-matrix $C^{w}$ is of
the form 
\[
C^{w}=\begin{pmatrix}M_{n}\\
 & -I_{n-2}
\end{pmatrix}
\]
where $M_{n}$ is a $4\times4$ matrix and $I_{n-2}$ is the identity
matrix of rank $n-2$. By the relationship between $\mathbf{g}$-matrix
and $\mathbf{c}$-matrix as recorded in Lemma 5.12 of \cite{GHKK18},
we know that the $\mathbf{g}$-matrix of ${\bf s}_{w}$ is of the
form
\[
G^{w}=\begin{pmatrix}M_{n}^{'}\\
 & -I_{n-2}
\end{pmatrix}
\]
where $M_{n}^{'}$ is a $2\times2$ matrix and $I_{n-2}$ is the identity
matrix of rank $n-2$. Observe that the mutation sequence $W_{n}$
is equivalent to the mutation sequence 
\[
W_{n}^{'}:[5,1,2,3,4,5,\,\,6,1,2,3,4,6,\,\cdots,\,n+2,1,2,3,4,n+2]
\]
that is, $W_{n}$ and $W_{n}^{'}$ give rise to the same $\mathbf{c}$-matrix.
Let $\Pi^{'}$ be the subgroup of the symmetric group $S_{n+2}$ generated
by involutions $(1\,\,2)$ and $(3\,\,4)$. Since $W_{n}^{'}$ is
$\Pi^{'}$-constrained, by Lemma \ref{lem:Pi-invariant chambers},
$G^{w}$ is $\Pi^{'}$-invariant and hence $M_{n}^{'}$ is $\Pi^{'}$-invariant.
Let $\mathcal{C}_{w}^{+}$ be the cluster chamber corresponding to
the seed ${\bf s}_{w}$. Now, to show that Donaldson-Thomas transformation
for $\mathcal{X}_{{\rm {\bf D}}_{n}}$ is cluster, it remains to show
that the chamber $\mathcal{C}_{w}^{+}$ and and the chamber $\mathcal{C}_{{\bf s}}^{-}$
are mutationally equivalent. Since $M_{n}^{'}$ is $\Pi^{'}$-invariant,
by first projecting to the first $4$ coordinates of $M_{\mathbb{R}}$
and then applying the folding procedure, we are reduced to the case
of Kronecker $2$-quiver. Then the mutational equivalence between
$\mathcal{C}_{w}^{+}$ and $\mathcal{C}_{{\bf s}}^{-}$ follows from
the fact that the cluster complex of the scattering diagram of the
Kronecker $2$-quiver fill the whole plane except for a single ray.
\end{proof}
\end{thm}

\begin{rem}
\label{rem:It-follows-from}It follows from the proof of the above
theorem that, for $n\geq5$, the Weyl group element ${\bf w}$ and
${\rm DT}_{\mathcal{X}_{\mathbf{D}_{n}}}$ differs only by a finite
alternating sequence of mutations $1,2$ and $3,4$. In fact, one
can show that if $n\geq5$ and $n$ is odd, ${\bf w}$ and ${\rm DT}_{\mathcal{X}_{\mathbf{D}_{n}}}$
differs by 
\[
\left[1,2\,\,\,\underbrace{3,4,1,2,\,\,\,3,4,1,2,\,\,\,\cdots3,4,1,2}_{\left(\frac{n-5}{2}\right)\text{ iterations}}\right]
\]
and if $n\geq5$ and $n$ is even, ${\bf w}$ and ${\rm DT}_{\mathcal{X}_{\mathbf{D}_{n}}}$
differs by 
\[
\left[\underbrace{3,4,1,2,\,\,\,3,4,1,2,\,\,\,3,4,1,2}_{\left(\frac{n-4}{2}\right)\text{ iterations}}\right].
\]
\end{rem}

\subsection{\label{subsec:Factorizations-of-DT-transformat-1}Factorizations
of DT-transformations as cluster transformations for $\mathcal{X}_{{\bf E}_{n}}(n=6,7,8)$.}

\subsubsection{$D^{\perp}\simeq{\bf E}_{6}$.}

For $D^{\perp}\simeq\mathbf{E}_{6}$, let 
\begin{align}
\alpha_{1} & =2l-E_{1}-E_{2}-E_{3}-E_{4}-E_{6}-E_{7}\label{eq:-3}\\
\alpha_{2} & =2l-E_{1}-E_{2}-E_{3}-E_{5}-E_{6}-E_{8}\nonumber \\
\alpha_{3} & =2l-E_{1}-E_{2}-E_{4}-E_{5}-E_{7}-E_{8}.\nonumber 
\end{align}
Notice that for $1\leq i,j\leq3$, $r_{\alpha_{i}}$ and $r_{\alpha_{j}}$
commute. For each $r_{\alpha_{i}}$ , we could further factor $r_{\alpha_{i}}$
as reflections with respect to simple roots. For example we could
factor $r_{\alpha_{1}}$ as
\[
r_{\alpha_{1}}=r_{l-E_{1}-E_{3}-E_{6}}\circ r_{l-E_{1}-E_{4}-E_{7}}\circ r_{l-E_{2}-E_{3}-E_{6}}\circ r_{l-E_{2}-E_{4}-E_{7}}.
\]
The Weyl group element 
\begin{equation}
r_{\alpha_{1}}\circ r_{\alpha_{2}}\circ r_{\alpha_{3}}\label{eq: E_6}
\end{equation}
differs from the DT-transformation of $\mathcal{X}_{{\bf E}_{6}}$
by two mutations $[1,2]$. See Appendix \ref{sec:Mutation-sequences-for}
for an explicit mutation sequence for ${\rm DT}_{\mathcal{X}_{\mathbf{E}_{6}}}$.

\subsubsection{$D^{\perp}\simeq{\bf E}_{7}$.}

For $D^{\perp}\simeq\mathbf{E}_{7}$, let
\begin{align}
\alpha_{1} & =2l-E_{1}-E_{2}-E_{3}-E_{4}-E_{6}-E_{7}\label{eq:-4}\\
\alpha_{2} & =2l-E_{1}-E_{2}-E_{3}-E_{5}-E_{6}-E_{8}\nonumber \\
\alpha_{3} & =2l-E_{1}-E_{2}-E_{4}-E_{5}-E_{6}-E_{9}\nonumber \\
\alpha_{4} & =2l-E_{1}-E_{2}-E_{3}-E_{4}-E_{8}-E_{9}\nonumber \\
\alpha_{5} & =2l-E_{1}-E_{2}-E_{3}-E_{5}-E_{7}-E_{9}\nonumber \\
\alpha_{6} & =2l-E_{1}-E_{2}-E_{4}-E_{5}-E_{7}-E_{8}.\nonumber 
\end{align}
Notice that for $1\leq i,j\leq6$, $r_{\alpha_{i}}$ and $r_{\alpha_{j}}$
commute. The Weyl group element 
\begin{equation}
\prod_{i=1}^{6}r_{\alpha_{i}}\label{eq:E_7}
\end{equation}
 agrees with ${\rm DT}_{\mathcal{X}_{\mathbf{E}_{7}}}$. See Appendix
\ref{sec:Mutation-sequences-for} for an explicit mutation sequence
for ${\rm DT}_{\mathcal{X}_{\mathbf{E}_{7}}}$.

\subsubsection{$D^{\perp}\simeq{\bf E}_{8}$.}

For $D^{\perp}\simeq{\bf E}_{8}$, let 
\begin{equation}
\alpha=6l-3(E_{1}+E_{2})-2(E_{3}+E_{4}+E_{5})-(2E_{6}+E_{7}+E_{8}+E_{9}+E_{10})\label{eq:-5}
\end{equation}
\begin{align}
\beta_{1} & =2l-E_{1}-E_{2}-E_{3}-E_{4}-E_{7}-E_{8}\label{eq:-6}\\
\beta_{2} & =2l-E_{1}-E_{2}-E_{4}-E_{5}-E_{7}-E_{9}\nonumber \\
\beta_{3} & =2l-E_{1}-E_{2}-E_{3}-E_{5}-E_{7}-E_{10}\nonumber \\
\beta_{4} & =2l-E_{1}-E_{2}-E_{3}-E_{4}-E_{9}-E_{10}\nonumber \\
\beta_{5} & =2l-E_{1}-E_{2}-E_{4}-E_{5}-E_{8}-E_{10}\nonumber \\
\beta_{6} & =2l-E_{1}-E_{2}-E_{3}-E_{5}-E_{8}-E_{9}.\nonumber 
\end{align}
Notice that for any $1\leq i,j\leq6$, $r_{\beta_{i}}$ and $r_{\beta_{j}}$
commute and $r_{\alpha}$ commutes with $r_{\beta_{i}}$ for any $i$.
We could factor $r_{\alpha}$ as 
\[
r_{\alpha}=r_{\gamma_{1}}\circ r_{\delta_{1}}\circ r_{\gamma_{2}}\circ r_{\delta_{2}}
\]
where for $\epsilon\in\{1,2\}$, 
\begin{align*}
\gamma_{\epsilon} & =3l-\sum_{i=1}^{8}E_{i}-E_{\epsilon}\\
\delta_{\epsilon} & =3l-\sum_{i=1}^{6}E_{i}-(E_{9}+E_{10})-E_{\epsilon}.
\end{align*}
We could factor $r_{\gamma_{\epsilon}}$ and $r_{\delta_{\epsilon}}$
into compositions of reflections with respect to simple roots. For
example we could factor $r_{\gamma_{1}}$ as
\[
r_{\gamma_{1}}=r_{l-E_{1}-E_{5}-E_{8}}\circ r_{2l-E_{1}-E_{2}-E_{3}-E_{4}-E_{6}-E_{7}}\circ r_{l-E_{1}-E_{5}-E_{8}}
\]
and we know how to further factor $r_{2l-E_{1}-E_{2}-E_{3}-E_{4}-E_{6}-E_{7}}$.

The Weyl group element 
\begin{equation}
r_{\alpha}\circ\prod_{i=1}^{6}r_{\beta_{i}}\label{eq:-3 E_8}
\end{equation}
agrees with ${\rm DT}_{\mathcal{X}_{\mathbf{E}_{8}}}$. See Appendix
\ref{sec:Mutation-sequences-for} for an explicit mutation sequence
for ${\rm DT}_{\mathcal{X}_{\mathbf{E}_{8}}}$.

Combine results in this subsection, we have the following theorem:
\begin{thm}
\label{Thm:The-Donaldson-Thomas-transformat-1}The Donaldson-Thomas
transformation of $\mathcal{X}_{{\bf E}_{n}}$ ($n=6,7,8)$ is cluster.
\end{thm}

\begin{cor}
The Donaldson-Thomas transformation of $\mathcal{A}_{{\rm prin},{\bf s}}$
is cluster. 
\end{cor}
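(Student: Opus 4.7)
The plan is to lift the cluster factorizations of $\mathrm{DT}_{\mathcal{X}_{\mathbf{s}}}$ obtained in Theorem \ref{thm:The-Donaldson-Thomas-transformat} and Theorem \ref{thm:The-Donaldson-Thomas-transformat-1} directly to $\mathcal{A}_{\mathrm{prin},\mathbf{s}}$, exploiting the fact that the scattering diagram on the tropical space of $\mathcal{A}_{\mathrm{prin},\mathbf{s}}$ is a trivial pullback of the scattering diagram on the tropical space of $\mathcal{X}_{\mathbf{s}}$. First I would recall, as already used in Definition \ref{def:Fix-.-Suppose}, that by Construction 2.11 of \cite{GHKK} the walls of $\mathfrak{D}_{\mathbf{s}}^{\mathcal{A}_{\mathrm{prin}}}$ are the inverse images of the walls of $\mathfrak{D}_{\mathbf{s}}$ along $\rho^{T}:M_{\mathbb{R}}\oplus N_{\mathbb{R}}\to M_{\mathbb{R}}$, carrying the same attached Lie group elements. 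Consequently the cluster chambers $\mathcal{C}_{\mathbf{s}}^{\pm}$ of $\mathfrak{D}_{\mathbf{s}}$ lift to the cluster chambers $\mathcal{C}_{\mathbf{s}}^{\pm}\times N_{\mathbb{R}}$ of $\mathfrak{D}_{\mathbf{s}}^{\mathcal{A}_{\mathrm{prin}}}$, and any mutation sequence connecting $\mathcal{C}_{\mathbf{s}}^{+}$ to $\mathcal{C}_{\mathbf{s}}^{-}$ in the former automatically connects the corresponding chambers in the latter.

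Next I would invoke the characterization of the DT transformation in \cite{GS16}: on either tropical space, the DT transformation is the element of the cluster modular group whose tropicalization agrees, up to the appropriate seed isomorphism, with the map sending $\mathcal{C}^{+}$ to $\mathcal{C}^{-}$. Since the cluster modular group acts functorially and compatibly on the $\mathcal{X}$-, $\mathcal{A}$- and $\mathcal{A}_{\mathrm{prin}}$-spaces attached to a seed, a mutation sequence combined with a seed isomorphism realizes $\mathrm{DT}_{\mathcal{X}_{\mathbf{s}}}$ if and only if it realizes $\mathrm{DT}_{\mathcal{A}_{\mathrm{prin},\mathbf{s}}}$. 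In particular I may feed the explicit mutation sequences produced for the $\mathbf{D}_{n}$-case (built from the Weyl element $\mathbf{w}_{5}\circ\cdots\circ\mathbf{w}_{n+2}$ together with the finite correction recorded in Remark \ref{rem:It-follows-from}) and for the $\mathbf{E}_{n}$-cases (together with the corrections in Appendix \ref{sec:Mutation-sequences-for}) into $\mathcal{A}_{\mathrm{prin},\mathbf{s}}$ and thereby obtain the desired cluster factorization of $\mathrm{DT}_{\mathcal{A}_{\mathrm{prin},\mathbf{s}}}$.

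The only point requiring care, and the main (minor) obstacle, is checking that the terminating seed isomorphism $\gamma$ used in Subsection \ref{subsec:Factorizations-of-DT-transformat} to match ${\bf s}_{w}$ with ${\bf s}$ on the $\mathcal{X}$-side extends to the correct seed isomorphism on the $\mathcal{A}_{\mathrm{prin}}$-side. This is automatic because the $\mathcal{A}_{\mathrm{prin}}$-seed is built functorially from the $\mathcal{X}$-seed by doubling the lattice $N\mapsto N\oplus M$ and extending the form accordingly, so any permutation of the original basis indices extends unambiguously to a seed isomorphism of the doubled seed, and the tropicalization of this extension is, by construction, precisely the map $\gamma$ on the $M$-factor and the induced map on the $N$-factor. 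With this verification in place, each cluster factorization from the previous two theorems reads verbatim as a cluster factorization of $\mathrm{DT}_{\mathcal{A}_{\mathrm{prin},\mathbf{s}}}$, completing the corollary.
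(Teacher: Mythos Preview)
Your proposal is correct and follows essentially the same approach as the paper: the paper's proof simply states that the result follows immediately from Theorem \ref{thm:The-Donaldson-Thomas-transformat}, Theorem \ref{thm:The-Donaldson-Thomas-transformat-1}, and the definition of ${\rm DT}_{\mathcal{A}_{{\rm prin},{\bf s}}}$ from \cite{GS16}. Your version merely unpacks this one-line deduction by explaining explicitly why the mutation sequences and seed isomorphisms lift from $\mathcal{X}_{\mathbf{s}}$ to $\mathcal{A}_{\mathrm{prin},\mathbf{s}}$, which is exactly what the paper leaves implicit.
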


\begin{proof}
This follows immediately from Theorem \ref{Thm:The-Donaldson-Thomas-transformat},
Theorem \ref{Thm:The-Donaldson-Thomas-transformat-1} and the definition
of ${\rm DT}_{\mathcal{A}_{{\rm prin},{\bf s}}}$ (cf. \cite{GS16}). 
\end{proof}

\subsection{\label{subsec:Application-of-folding}Application of folding to cases
where $D^{\perp}\simeq\mathbf{D}_{n}\,(n\protect\geq4)\text{ or }\mathbf{E}_{n}(n=6,7,8)$.}

Let us recall some basic definitions of cluster structures for log
Calabi-Yau varieties.
\begin{defn}
\label{def:cluster structures}Given a log Calabi-Yau variety $V$,
if there exists a cluster variety $V^{'}=\bigcup_{w\in\mathfrak{T}_{{\bf s}_{0}}}T_{L,{\bf s}_{w}}$
of type $\mathcal{A}$, $\mathcal{X}$ or $\mathcal{A}_{{\rm prin}}$
together with a birational morphism
\[
\iota:V^{'}\rightarrow V
\]
such that for any pair of adjacent seeds ${\bf s}_{w}\stackrel{\mu_{k}}{\rightarrow}{\bf s}_{w^{'}}$
in $\mathfrak{T}_{{\bf s}_{0}}$, the restriction of $\iota$ to $T_{L,{\bf s}_{w}}\bigcup_{\mu_{k}}\allowbreak T_{L,{\bf s}_{w^{'}}}$
is an open embedding into $V$, then we say that the cluster torus
charts $T_{L,{\bf s}_{w}}$ together with their embeddings $\iota_{{\bf s}_{w}}:T_{L,{\bf s}_{w}}\hookrightarrow V$
into $V$ provide \emph{an atlas of cluster torus charts }for $V$.
If $\iota$ is an isomorphism outside strata of codimension at least
$2$ in the domain and range, then we say that the atlas of cluster
torus charts $\{(T_{L,{\bf s}_{w}},\iota_{{\bf s}_{w}})\}_{w\in\mathfrak{T}_{{\bf s}_{0}}}$
provides a \emph{cluster structure} for $V$. In this case, the codimension
$2$ condition guarantees that $V$ and the cluster variety $V^{'}$
have the same ring of regular functions.
\end{defn}

\begin{rem}
In the above definition, by calling $V$ a variety, we are assuming
that $V$ is an integral, separated scheme over $\mathbb{C}$, though
possibly not of finite type. As shown in \cite{GHK15}, $\mathcal{A}$
and $\mathcal{A}_{{\rm prin}}$ spaces are always separated while
$\mathcal{X}$ spaces are in general not (cf. Theorem 3.14 and Remark
4.2 of \cite{GHK15}). In this light, calling the $\mathcal{X}$ spaces
\emph{$\mathcal{X}$ cluster varieties} can be very misleading, though
the terminology has already been accepted in the cluster literature.
In the above definition, if $V^{'}$ is of type $\mathcal{A}$ or
$\mathcal{A}_{{\rm prin}}$, we could just assume that $\iota$ is
an open embedding. If $V^{'}$ is of type $\mathcal{X}$, by requiring
only the restriction of $\iota$ to any pair of adjacent cluster tori
to be an open embedding into $V$, we allow certain loci in $V^{'}$
to get identified when mapped into $V$ and therefore get around the
issue of non-separateness of $V^{'}$.
\end{rem}

\begin{defn}
\label{def:non-eqiuv-cluster}Given a log Calabi-Yau variety $V$
and two atlases of cluster torus charts 
\[
\mathcal{T}_{1}=\left\{ \left(T_{L,{\bf s}_{w}},\iota_{{\bf s}_{w}}\right)\right\} _{w\in\mathfrak{T}_{{\bf s}_{0}}},\quad\mathcal{T}_{2}=\left\{ \left(T_{L^{'},{\bf s}_{v}},\iota_{{\bf s}_{v}}\right)\right\} _{v\in\mathfrak{T}_{{\bf s}_{0}^{'}}}
\]
for $V$, we say $\mathcal{T}_{1}$ and $\mathcal{T}_{2}$ are \emph{non-equivalent
atlases} if for any $w$ in $\mathfrak{T}_{{\bf s}_{0}}$, there is
no $v$ in $\mathfrak{T}_{{\bf s}_{0}^{'}}$ such that the embeddings
$\iota_{{\bf s}_{w}}:T_{L,{\bf s}_{w}}\hookrightarrow V$ and $\iota_{{\bf s}_{v}}:T_{L,{\bf s}_{v}}\hookrightarrow V$
have the same image in $V$. Given two non-equivalent atlases of cluster
torus charts for $V$, we say they provide \emph{non-equivalent cluster
structures} for $V$ if both atlases cover $V$ up to codimension
$2$.
\end{defn}

Now, we are ready to apply the folding technique to families of log
Calabi-Yau surfaces. For $D^{\perp}\simeq\mathbf{D}_{n}\,(n\geq4)$,
let $\Pi$ be the subgroup of the symmetric group $S_{n+2}$ generated
by involutions
\[
(1\,\,2),\,\,\,(3\,\,4),\,\,\,(5\,\,6),(6\,\,7),\cdots,(n+1\,\,n+2).
\]
For $D^{\perp}\simeq\mathbf{E}_{n}\,(n=6,7,8)$, let $\Pi$ be the
subgroup of the symmetric group $S_{n+2}$ generated by involutions
\[
(1\,\,2),\,\,\,(3\,\,4),(4\,5),\,\,\,(6\,\,7),(7\,\,8),\cdots,(n+1\,\,n+2)
\]
Let $\overline{{\bf s}}$ be the folded seed via the symmetry of $\Pi$
on ${\bf s}$, then for $D^{\perp}\simeq\mathbf{D}_{n}\,(n\geq4)$,
$\overline{{\bf s}}$ has the exchange matrix equal to
\[
\begin{pmatrix}0 & -2 & 2\\
2 & 0 & -2\\
-\left(n-2\right) & n-2 & 0
\end{pmatrix}
\]
and for $D^{\perp}\simeq\mathbf{E}_{n}\,(n=6,7,8)$, $\overline{{\bf s}}$
has the exchange matrix equal to 
\[
\begin{pmatrix}0 & -2 & 2\\
3 & 0 & -3\\
-\left(n-3\right) & n-3 & 0
\end{pmatrix}.
\]
The $\mathcal{X}$-cluster variety $\mathcal{X}_{\overline{{\bf s}}}$
is a locally closed cluster subvariety of $\mathcal{X}_{{\bf s}}$.
By subsection \ref{subsec:Folding-and-degenerate}, $\mathcal{X}_{\overline{{\bf s}}}$
is a maximal degenerate $1$-parameter subfamily of $\mathcal{X}_{{\bf s}}$.

As we have seen in subsection \ref{subsec:Factorizations-of-DT-transformat}
and subsection \ref{subsec:Factorizations-of-DT-transformat-1}, there
exists a special element ${\bf w}$ of order $2$ in the Weyl group
of $(Y,D)$ that either coincides with the Donaldson-Thomas transformation
of $\mathcal{X}_{{\bf s}}\simeq\mathcal{X}_{D^{\perp}}$ or closely
relates to it. For $D^{\perp}\simeq\mathbf{D}_{n}\,$ ($n\geq4$),
${\bf w}$ is given by \ref{eq:-3 D_n}; for $D^{\perp}\simeq\mathbf{E}_{6},$
$\mathbf{E}_{7}$, and $\mathbf{E}_{8}$, ${\bf w}$ is given by \ref{eq: E_6},
\ref{eq:E_7} and \ref{eq:-3 E_8} respectively.
\begin{thm}
\label{thm:The-scattering-diagram}The scattering diagram $\mathfrak{D}_{\overline{{\bf s}}}$
has two distinct subfans corresponding to $\Delta_{\overline{{\bf s}}}^{+}$
and $\Delta_{\overline{{\bf s}}}^{-}$ respectively. The action of
Weyl group element ${\bf w}$ on $\mathfrak{D}_{{\bf s}}$ descends
to $\mathfrak{D}_{\overline{{\bf s}}}$ and interchanges $\Delta_{\overline{{\bf s}}}^{+}$
and $\Delta_{\overline{{\bf s}}}^{-}$. In particular, if we build
a variety $\mathcal{\tilde{A}}_{{\rm prin},\bar{{\bf s}}}$ using
both $\Delta_{\overline{{\bf s}}}^{+}$ and $\Delta_{\overline{{\bf s}}}^{-}$
as in Theorem 1.7 of \cite{YZ}, then $\Delta_{\overline{{\bf s}}}^{+}$
and $\Delta_{\overline{{\bf s}}}^{-}$ provide non-equivalent atlases
of cluster torus charts for $\mathcal{\tilde{A}}_{{\rm prin},\bar{{\bf s}}}$.
\end{thm}

\begin{proof}
Let $\mathcal{C}_{w}^{+}$ the cluster chamber that $\mathcal{C}_{{\bf s}}^{+}$
is mapped to under the action of ${\bf w}$ on $\mathfrak{D}_{{\bf s}}$.
As we have seen in subsection \ref{subsec:Factorizations-of-DT-transformat}
and subsection \ref{subsec:Factorizations-of-DT-transformat-1}, the
cluster chamber $\mathcal{C}_{w}^{+}$ is $\Pi$-invariant. By Lemma
\ref{lem:Pi-invariant chambers}, $\mathcal{C}_{w}^{+}$ corresponds
to a chamber $\overline{\mathcal{C}_{w}^{+}}$ in $\mathfrak{D}_{\overline{{\bf s}}}$.
Since $\mathcal{C}_{w}^{+}$ and $\mathcal{C}_{{\bf s}}^{-}$ are
related by $\Pi$-constrained mutations, $\overline{\mathcal{C}_{w}^{+}}$
and $\mathcal{C}_{\overline{{\bf s}}}^{-}$ belong to the same subfan
$\overline{\Delta}_{w\in{\bf s}}$ which coincides $\Delta_{\overline{{\bf s}}}^{-}$.
Let $U=Y\setminus D$. Then under the set-theoretic identification
of $\mathcal{X}_{{\bf s}}(\mathbb{R}^{T})$ with $M_{\mathbb{R}}$
as given by the seed ${\bf s}$, $U^{{\rm trop}}(\mathbb{R})$ can
be identified with the two-dimensional subspace of $M_{\mathbb{R}}$
defined by the equations 
\begin{align*}
 & \left\langle e_{1}-e_{2},\cdot\right\rangle =0,\left\langle e_{3}-e_{4},\cdot\right\rangle =0,\\
 & \left\langle e_{5}-e_{6},\cdot\right\rangle =\left\langle e_{6}-e_{7},\cdot\right\rangle \cdots=\left\langle e_{n+1}-e_{n+2},\cdot\right\rangle =0,\\
 & \left\langle e_{1}+e_{3}+e_{5},\cdot\right\rangle =0
\end{align*}
if $D^{\perp}\simeq\mathbf{D}_{n}$ ($n\geq4)$ and by the equations
\begin{align*}
 & \left\langle e_{1}-e_{2},\cdot\right\rangle =0,\left\langle e_{3}-e_{4},\cdot\right\rangle =\left\langle e_{4}-e_{5},\cdot\right\rangle =0,\\
 & \left\langle e_{6}-e_{7},\cdot\right\rangle =\left\langle e_{7}-e_{8},\cdot\right\rangle \cdots=\left\langle e_{n+1}-e_{n+2},\cdot\right\rangle =0,\\
 & \left\langle e_{1}+e_{3}+e_{6},\cdot\right\rangle =0
\end{align*}
if $D^{\perp}\simeq\mathbf{E}_{n}$ ($n=6,7,8)$. Thus via the embedding
$q^{*}:\overline{M}_{\mathbb{R}}\hookrightarrow M_{\mathbb{R}},$
$U^{{\rm trop}}(\mathbb{R})$ can be identified as the hyperplane
in $\overline{M}_{\mathbb{R}}$ given by the equation 
\[
\left\langle e_{\Pi1}+e_{\Pi3}+e_{\Pi5},\cdot\right\rangle =0
\]
if $D^{\perp}\simeq\mathbf{D}_{n}$ ($n\geq4)$ and by the equation
\[
\left\langle e_{\Pi1}+e_{\Pi3}+e_{\Pi6},\cdot\right\rangle =0
\]
if $D^{\perp}\simeq\mathbf{E}_{n}$ ($n=6,7,8)$. By Theorem 4.5 of
\cite{TM}, $U^{{\rm trop}}(\mathbb{R})$ intersects with $\Delta_{{\bf s}}^{+}$
only at the origin. Thus, $U^{{\rm trop}}(\mathbb{R})$ intersects
with $\Delta_{\overline{{\bf s}}}^{+}$ and $\overline{\Delta}_{w\in{\bf s}}=\Delta_{\overline{{\bf s}}}^{-}$
only at the origin. Since the chamber $\mathcal{C}_{\overline{{\bf s}}}^{+}$
of $\Delta_{\overline{{\bf s}}}^{+}$ and the chamber $\mathcal{C}_{\overline{{\bf s}}}^{-}$
of $\Delta_{\overline{{\bf s}}}^{-}$ live in different half spaces
separated by $U^{{\rm trop}}(\mathbb{R})$ as a hyperplane in $\overline{M}_{\mathbb{R}}$,
we conclude that $\Delta_{\overline{{\bf s}}}^{+}$ and $\Delta_{\overline{{\bf s}}}^{-}$
are two distinct subfans in $\mathfrak{D}_{\overline{{\bf s}}}$ .

It follows from Proposition 3.4 and Proposition 3.6 of \cite{YZ}
that the action of Weyl group element ${\bf w}$ on $\mathfrak{D}_{{\bf s}}$
descends to $\mathfrak{D}_{\overline{{\bf s}}}$. Since the induced
action of ${\bf w}$ on $\mathfrak{D}_{\overline{{\bf s}}}$ maps
$\Delta_{{\bf \overline{s}}}^{+}$ to $\Delta_{\overline{{\bf s}}}^{-}$
and ${\bf w}$ is of order $2$, we conclude that it interchanges
$\Delta_{\overline{{\bf s}}}^{+}$ and $\Delta_{\overline{{\bf s}}}^{-}$.
The last statement of the theorem follows from Theorem 1.7 of \cite{YZ}. 
\end{proof}
Let us end by the final remark that there are other, potentially more
interesting, ways to apply the folding procedure in positive non-acyclic
cases. For example when $D^{\perp}\simeq\mathbf{D}_{2m}$ ($m\geq2$),
for each $1\leq l\leq m-1$, denote by $\alpha_{l}$ the root $2l-E_{1}-E_{2}-E_{3}-E_{4}-E_{2l+3}-E_{2l+4}$.
Observe that we have the following relation in the Weyl group
\[
r_{\alpha_{l}}={\bf w}_{2l+3}\circ{\bf w}_{2l+4}
\]
where ${\bf w}_{i}$ is defined as in \ref{eq:-3_w_i}. Let $\mathcal{P}$
be the power set of $\{1,\cdots l\}$. Given $\varpi=\{i_{1},\cdots,i_{h}\}\in\mathcal{P}$,
let ${\bf s}_{\varpi}$ be the seed after performing the sequence
of reflections 
\[
r_{\alpha_{i_{1}}}\circ\cdots\circ r_{\alpha_{i_{h}}}.
\]
Since the subgroup of the Weyl group generated by $r_{\alpha_{l}}$
is abelian, $\mathcal{C}_{{\bf s}_{\varpi}}^{+}$ is independent of
the choice of ordering of elements in $\varpi$. We have the following
conjecture:
\begin{conj}
 \label{thm:conj1}
For any two distinct elements $\varpi,\varpi^{'}$ in $\mathcal{P}$, $\overline{\Delta}_{{\bf s}_{\varpi},\Pi}^{+}$ and $\overline{\Delta}_{{\bf s}_{\varpi^{'}},\Pi}^{+}$ has trivial intersection.
\end{conj}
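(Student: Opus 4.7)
Proof proposal. The plan is to imitate the proof of Theorem \ref{thm:The-scattering-diagram}, generalized from a single separating hyperplane to an arrangement of $m-1$ of them, one associated to each root $\alpha_l$. A direct intersection computation on ${\rm Pic}(Y)$ shows $\alpha_l \cdot \alpha_{l'} = 4 - 4 = 0$ for $l \neq l'$ (the four shared curves $E_1, E_2, E_3, E_4$ contribute $-4$, which cancels $(2l) \cdot (2l') = 4$), so the reflections $r_{\alpha_l}$ generate a subgroup $W_0 \simeq (\mathbb{Z}/2)^{m-1}$ of $W(Y,D)$ acting freely on the $2^{m-1}$ chambers $\{\mathcal{C}_{{\bf s}_\varpi}^+\}_{\varpi \in \mathcal{P}}$. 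By the same argument as in Lemma \ref{lem:Pi-invariant chambers}, each $\mathcal{C}_{{\bf s}_\varpi}^+$ is $\Pi$-invariant, so it descends to a well-defined chamber $\overline{\mathcal{C}}_\varpi^+ \in \mathfrak{D}_{\overline{{\bf s}}}$.

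The first technical step is to construct, for each $l \in \{1, \ldots, m-1\}$, a tropical separating hypersurface $H_l \subset \overline{M}_{\mathbb{R}}$. Because $\overline{K}$ is only one-dimensional for $\mathbf{D}_{2m}$, the naive folded images $q(\tilde{\alpha}_l)$ all coincide (they equal $2e_{\Pi 1} + 2 e_{\Pi 3} + 2 e_{\Pi 5}$ independently of $l$), so $H_l$ cannot come from direct tropicalization of $\alpha_l$ alone. Instead I would realize $H_l$ as the tropicalization $U_l^{\text{trop}}(\mathbb{R})$ of an intermediate partially degenerate sub-family sitting inside the universal family $\mathcal{X}_{\overline{{\bf s}}} \to T_{\overline{K}^*}$, corresponding geometrically to the locus where the pair $(E_{2l+3}, E_{2l+4})$ is identified (alongside the pairs $(E_1, E_2)$ and $(E_3, E_4)$) to force an $A_1$-singularity of the generic fibre detected by $\alpha_l$. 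Each $U_l$ is itself a log Calabi-Yau surface, and the Global Torelli Theorem for Looijenga pairs supplies the desired codimension-one subset of $\overline{M}_{\mathbb{R}}$ together with the necessary transverse position to the cluster complex.

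Using the piecewise-linear action of $r_{\alpha_l}$ from Theorem \ref{thm:Weyl group preseves cluster complex} together with the folding descent of Theorem \ref{thm:folding-scattering diag}, one checks that $r_{\alpha_l}$ descends to an involution of $\mathfrak{D}_{\overline{{\bf s}}}$ that fixes $H_{l'}$ for $l' \neq l$ pointwise (by orthogonality of the roots) and swaps the two components of $\overline{M}_{\mathbb{R}} \setminus H_l$. Consequently the base chambers $\overline{\mathcal{C}}_\varpi^+$ for $\varpi \in \mathcal{P}$ occupy $2^{m-1}$ distinct quadrants of the arrangement $\{H_1, \ldots, H_{m-1}\}$, distinguished by the sign pattern $\epsilon_l(\varpi) = (-1)^{[l \in \varpi]}$.

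The main obstacle is the non-crossing property: one must show that each subfan $\overline{\Delta}_{{\bf s}_\varpi, \Pi}^+$ is contained entirely in the quadrant of its base chamber, i.e., no $\Pi$-constrained mutation starting at ${\bf s}_\varpi$ ever crosses any $H_l$. In the spirit of Theorem 4.5 of \cite{TM}, this reduces to proving $H_l \cap \Delta_{\overline{{\bf s}}}^+ = \{0\}$ for each $l$, and then propagating this along $\Pi$-constrained mutations by the Weyl-equivariance of the scattering diagram established in Theorem \ref{thm:Weyl group preseves cluster complex}. For $m = 2$ the statement collapses to Theorem \ref{thm:The-scattering-diagram}, but for general $m$ a delicate comparison between the wall structure of $\mathfrak{D}_{\overline{{\bf s}}}$ and the full arrangement $\{H_1, \ldots, H_{m-1}\}$ is required, and constructing the intermediate sub-families $U_l$ with sufficient precision to detect each quadrant separately is where I expect the bulk of the technical work to lie.
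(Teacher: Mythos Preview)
The statement you are attempting to prove is labeled a \emph{conjecture} in the paper, and the paper does not supply a proof. The only support given is numerical evidence: the author reports that the chambers of $\Delta_{{\bf s},\Pi}^{+}$ appear to lie in the region of $\mathcal{X}_{\mathbf{D}_{2m}}^{\mathrm{trop}}$ cut out by the inequalities
\[
\left(z^{e_{1}+e_{3}+e_{5}}\right)^{T}\geq 0,\quad \left(z^{e_{1}+e_{3}+e_{7}}\right)^{T}\geq 0,\quad\ldots,\quad \left(z^{e_{1}+e_{3}+e_{2m+1}}\right)^{T}\geq 0,
\]
while for nonempty $\varpi$ the chamber $\mathcal{C}_{{\bf s}_{\varpi}}^{+}$ lies outside this region. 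So there is no paper proof to compare your proposal against.

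That said, your outline has a genuine structural gap precisely where you flag it. You correctly observe that after folding by $\Pi$ the elements $\tilde{\alpha}_{l}\in K$ all have the same image in $\overline{K}$, so the single hyperplane used in Theorem~\ref{thm:The-scattering-diagram} cannot be replicated $m-1$ times inside $\overline{M}_{\mathbb{R}}$ by naive descent. Your proposed remedy, constructing intermediate partially folded subfamilies $U_{l}$ whose tropicalizations furnish distinct hypersurfaces $H_{l}\subset\overline{M}_{\mathbb{R}}$, is not obviously available: $\overline{M}_{\mathbb{R}}$ is three-dimensional and $\overline{K}$ is one-dimensional, so there is simply not enough room for $m-1$ independent codimension-one tropical loci of the type you want. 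The numerical evidence in the paper in fact points to separating hypersurfaces living in $M_{\mathbb{R}}$ \emph{before} folding (the functions $z^{e_{1}+e_{3}+e_{2l+3}}$ are not $\Pi$-invariant), which is why the confinement is stated upstairs in $\mathcal{X}_{\mathbf{D}_{2m}}^{\mathrm{trop}}$ rather than in $\mathfrak{D}_{\overline{{\bf s}}}$. Any argument along your lines would therefore have to control how $\Pi$-constrained mutation trajectories in $\Delta_{{\bf s}}^{+}$ behave relative to these non-$\Pi$-invariant half-spaces, and that is exactly the content of the conjecture rather than a lemma on the way to it.
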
The above conjecture is based on the numerical evidence that the chambers
in $\Delta_{{\bf s},\Pi}^{+}$ is contained in the region in $\mathcal{X}_{{\bf D}_{2m}}^{{\rm trop}}$
cut out by the equation
\[
\left(z^{e_{1}+e_{3}+e_{5}}\right)^{T}\geq0,\left(z^{e_{1}+e_{3}+e_{7}}\right)^{T}\geq0,\cdots,\left(z^{e_{1}+e_{3}+e_{2m+1}}\right)^{T}\geq0
\]
while given any nonempty subset $\varpi$ in $\mathcal{P}$, $\mathcal{C}_{{\bf s}_{\varpi}}^{+}$
is outside the above region.

\appendix

\section{Mutation sequences for ${\rm DT}_{\mathcal{X}_{\mathbf{E}_{n}}}\,(n=6,7,8)$
in SAGE code. \label{sec:Mutation-sequences-for}}

In the ClusterSeed package of SAGE, the indexing for cluster seeds starts from $0$ instead of $1$. Thus we need to shift mutation sequences given in the paper by $-1$ in SAGE code.

\subsection{\label{subsec:A1}Mutation sequence for ${\rm DT}_{\mathcal{X}_{\mathbf{E}_{6}}}$.}
In the following SAGE code, the first three lines of the mutation sequence give reflections $r_{\alpha_{i}}(i=1,2,3)$ as defined in \ref{eq:-3}. The two mutations $[0,1]$ in the last line of the mutation
sequence is the difference between the Weyl group element ${\bf w}=r_{\alpha_{1}}\circ r_{\alpha_{2}}\circ r_{\alpha_{3}}$
and ${\rm DT}_{\mathcal{X}_{\mathbf{E}_{6}}}$.

\begin{sageexample}
sage: S = ClusterSeed(matrix([[0,0,-1,-1,-1,1,1,1],[0,0,-1,-1,-1,1,1,1],
 ....: [1,1,0,0,0,-1,-1,-1],[1,1,0,0,0,-1,-1,-1],[1,1,0,0,0,-1,-1,-1],
 ....: [-1,-1,1,1,1,0,0,0],[-1,-1,1,1,1,0,0,0],[-1,-1,1,1,1,0,0,0]]));
sage: S.mutate([0,2,5,3,6,0,1,2,5,3,6,1]);
sage: S.mutate([0,2,5,4,7,0,1,2,5,4,7,1]);
sage: S.mutate([0,3,6,4,7,0,1,3,6,4,7,1]);
sage: S.g_matrix()
sage: S.mutate([0,1]); 
sage: S.g_matrix()
\end{sageexample}
\\
An alternative, shorter mutation sequence for ${\rm DT}_{\mathbf{E}_{6}}$ is as follows:
\begin{sageexample}
sage: S = ClusterSeed(matrix([[0,0,-1,-1,-1,1,1,1],[0,0,-1,-1,-1,1,1,1],
 ....: [1,1,0,0,0,-1,-1,-1],[1,1,0,0,0,-1,-1,-1],[1,1,0,0,0,-1,-1,-1],
 ....: [-1,-1,1,1,1,0,0,0],[-1,-1,1,1,1,0,0,0],[-1,-1,1,1,1,0,0,0]]));
sage: S.mutate([0,2,5,3,6,4,7,0,1,2,5,3,6,4,7,1,0,2,5,3,6,4,7,1]);
sage: S.g_matrix()
\end{sageexample} 
\\
\subsection{Mutation sequence for ${\rm DT}_{\mathcal{X}_{\mathbf{E}_{7}}}$.}

In the following SAGE code, each line of the mutation sequence gives
one of the reflections $r_{\alpha_{i}}(i=1,2,\cdots,6)$ as defined
in \ref{eq:-4}. In this case, the Weyl group element ${\bf w=}\prod_{i=1}^{6}r_{\alpha_{i}}$
agrees with ${\rm DT}_{\mathcal{X}_{\mathbf{E}_{7}}}$.

\begin{sageexample}
sage: S = ClusterSeed(matrix([[0,0,-1,-1,-1,1,1,1,1],[0,0,-1,-1,-1,1,1,1,1],
....: [1,1,0,0,0,-1,-1,-1,-1],[1,1,0,0,0,-1,-1,-1,-1],[1,1,0,0,0,-1,-1,-1,-1],
....: [-1,-1,1,1,1,0,0,0,0],[-1,-1,1,1,1,0,0,0,0],[-1,-1,1,1,1,0,0,0,0],
....: [-1,-1,1,1,1,0,0,0,0]]))
sage: S.use_fpolys(False);
sage: S.mutate([0,2,3,5,6,0,1,2,3,5,6,1]);
sage: S.mutate([0,2,4,5,7,0,1,2,4,5,7,1]);
sage: S.mutate([0,3,4,5,8,0,1,3,4,5,8,1]);
sage: S.mutate([0,2,3,7,8,0,1,2,3,7,8,1]);
sage: S.mutate([0,2,4,6,8,0,1,2,4,6,8,1]);
sage: S.mutate([0,3,4,6,7,0,1,3,4,6,7,1]);
sage: S.g_matrix()
\end{sageexample}
\\
\subsection{Mutation sequence for ${\rm DT}_{\mathcal{X}_{\mathbf{E}_{8}}}$.}

In the following SAGE code, the first four lines of the mutation sequence
give the reflection $r_{\alpha}$ as defined in \ref{eq:-5}. The
next six lines give reflections $r_{\beta_{i}}(i=1,2,\cdots,6)$ as
defined in \ref{eq:-6}. In this case, the Weyl group element ${\bf w}=r_{\alpha}\circ\prod_{i=1}^{6}r_{\beta_{i}}$
also agrees with ${\rm DT}_{\mathcal{X}_{\mathbf{E}_{8}}}$. 

\begin{sageexample}
sage: S = ClusterSeed(matrix([[0,0,-1,-1,-1,1,1,1,1,1],[0,0,-1,-1,-1,1,1,1,1,1],
....: [1,1,0,0,0,-1,-1,-1,-1,-1],[1,1,0,0,0,-1,-1,-1,-1,-1],[1,1,0,0,0,-1,-1,-1,-1,-1],
....: [-1,-1,1,1,1,0,0,0,0,0],[-1,-1,1,1,1,0,0,0,0,0],[-1,-1,1,1,1,0,0,0,0,0],
....: [-1,-1,1,1,1,0,0,0,0,0],[-1,-1,1,1,1,0,0,0,0,0]]))
sage: S.use_fpolys(False);
sage: S.mutate([0,4,7,0]);S.mutate([0,2,3,5,6,0,1,2,3,5,6,1]);S.mutate([1,4,7,1]);
sage: S.mutate([1,4,9,1]);S.mutate([0,2,3,5,8,0,1,2,3,5,8,1]);S.mutate([0,4,9,0]);
sage: S.mutate([1,4,7,1]);S.mutate([0,2,3,5,6,0,1,2,3,5,6,1]);S.mutate([0,4,7,0]); 
sage: S.mutate([0,4,9,0]);S.mutate([0,2,3,5,8,0,1,2,3,5,8,1]);S.mutate([1,4,9,1]);
sage: S.mutate([0,2,3,6,7,0,1,2,3,6,7,1]);
sage: S.mutate([0,3,4,6,8,0,1,3,4,6,8,1]);
sage: S.mutate([0,2,4,6,9,0,1,2,4,6,9,1]);
sage: S.mutate([0,2,3,8,9,0,1,2,3,8,9,1]);
sage: S.mutate([0,3,4,7,9,0,1,3,4,7,9,1]);
sage: S.mutate([0,2,4,7,8,0,1,2,4,7,8,1]);
sage: S.g_matrix()
\end{sageexample}

\end{document}